\newcommand{\limp}{\longrightarrow}
\newcommand{\liml}{\mathsf{Lim}_\mathscr{L}}
\newcommand{\tl}[1]{\mathrm{TL}_{#1}}
\DeclareMathOperator{\OC}{\mathsf{OrCl}}
\theoremstyle{definition}
\newtheorem{theorem}{Theorem}[section]
\newtheorem{lemma}[theorem]{Lemma}
\newtheorem{corollary}[theorem]{Corollary}
\newtheorem{definition}[theorem]{Definition}
\newtheorem{remark}[theorem]{Remark}
\newtheorem{example}[theorem]{Example}
\title{Lindenbaum-type Logical Structures\footnote{The final version of the article has been submitted to Logica Universalis.}}
  \author[1]{Sayantan Roy}
  \author[2]{Sankha S.\ Basu }
  \author[3]{Mihir K.\ Chakraborty}
  \date{July 22, 2021}
  \affil[1,2]{Dept.\ of Mathematics\\
  Indraprastha Institute of Information Technology-Delhi\\
  New Delhi, India.}
  \affil[3]{School of Cognitive Science\\Jadavpur University\\ Kolkata, India.}
  \date{September 04, 2022}
\begin{document}
  \maketitle
[The first author wishes to dedicate this paper to his grandparents, especially his grandfathers.]

\begin{abstract}
In this paper, we study some classes of logical structures from the universal logic standpoint, viz., those of the Tarski- and the Lindenbaum-types. The characterization theorems for the Tarski- and two of the four different Lindenbaum-type logical structures have been proved as well. The separations between the five classes of logical structures, viz., the four Lindenbaum-types and the Tarski-type have been established via examples. Finally, we study the logical structures that are of both Tarski- and a Lindenbaum-type, show their separations, and end with characterization, adequacy, minimality, and representation theorems for one of the Tarski-Lindenbaum-type logical structures.
\end{abstract}

\textbf{Keywords:}
Universal logic, logical structures, Tarski-type logical structures, Lindenbaum-type logical structures

\tableofcontents
\section{Introduction}\label{sec:Intro}
The field of logic has seen exponential growth since the turn of the twentieth century. This perhaps is, at least partially, due to logic and mathematics getting involved with each other. There was also an unprecedented upsurge in the number of logics, which continues to the present day. Therefore, a need was felt for a unifying notion of logic. This was also necessary for establishing logic as a bonafide and independent branch of mathematics, described by Bourbaki in \cite{Bourbaki1950} as the study of mathematical structures. Perhaps the first progress towards this end was made by Tarski in the early twentieth century. His work was later extended by the Polish logicians, and this is known as the theory of the \emph{consequence operator} (see Definition \ref{def:Tarski-type} for a version of this).

The modern notion of \emph{universal logic}, with the aim of providing a general theory of logics in the same way as universal algebra is a general study of algebraic structures, was introduced by B\'eziau in \cite{Beziau1994}. In particular, just as universal algebra does not propose that there is one algebra that is paramount or that there are some absolute laws that govern the realm of all algebras, universal logic neither asserts the existence of one universal logic nor the existence of any set of universal laws to be satisfied by all logics. In fact, as clarified by B\'eziau in \cite{Beziau2006}, ``\textellipsis from the viewpoint of universal logic, the existence of one universal logic is not even possible, and this is a result that can easily be shown.'' This is the sense in which we use the term `universal logic' in this paper (there have been other uses of the term, such as the one in \cite{Brady2006}, but these are not connected to the present study).

Fundamental to the study of universal logic is the notion of a \emph{logical structure} \cite{Beziau1994} (see Definition \ref{def:logical_structure}) much like the notion of an algebraic structure in universal algebra.

In this paper, we study two classes of logical structures, viz., logical structures of the Tarski- and of the Lindenbaum-types. The Tarski-type (see Definition \ref{def:Tarski-type}) refers to a single type of logical structures and has been widely studied (referred to as \emph{normal logics} in \cite{Beziau2001}). On the other hand, there are four possible definitions of a Lindenbaum-type logical structure (see Definition \ref{def:Lind-type}) based on the four versions of Lindenbaum's law discussed in \cite{Beziau1999}. In the present article, we have built upon and extended the study of the Lindenbaum-type logical structures initiated by B\'eziau in \cite{Beziau1999}. Questions regarding the possible relationships between these five classes (the Tarski- and the four Lindenbaum-types) have also been answered. Subsequently, the logical structures that are of both Tarski- and a Lindenbaum-type have been discussed. Furthermore, we have answered the questions regarding the relationships between the four Tarski-Lindenbaum-type logical structures. Central to the development of the Lindenbaum-type logical structures, is the notion of \emph{$\alpha$-saturated sets} (see Definition \ref{def:alpha-sat}). This is not a new concept and is defined as such, e.g., in \cite{Loparic_daCosta1984}; these are also defined as \emph{$a$-excessive theories} in \cite{Beziau1994}. We study these here in full generality and show that in case of known logics, this notion corresponds nicely to familiar ones. For example, it has been proved in Section \ref{sec:Lind} that the $\alpha$-saturated sets of a certain kind are precisely the implication-saturated sets described by Batens in \cite{Batens1980}.  

A central problem in developing any theory of structures is the characterization of the structures with certain special properties. For an analogy, one can think of the fundamental theorem of finite abelian groups. Likewise, in the theory of logical structures, a central problem is to characterize the logical structures possessing certain properties, chosen according to the purpose and goal for the logics under consideration. This is important because the logician, armed with such knowledge, can then better determine the applicability of a particular logic to a given situation (see \cite{Beziau2006} for a more detailed discussion on this). We believe that the characterization results obtained here for some of the aforementioned types of logical structures are among our major contributions in this article.

\subsection{Outline of the paper}\label{subsec:Intro/Outline}

We begin with some basic definitions in Subsection \ref{subsec:Intro/Fund_Not}. Next, the Tarski-type logical structures are discussed in Section \ref{sec:Tarski}. This includes a characterization theorem for these. Section \ref{sec:Sat_MaxNontriv} is split into two subsections and is devoted to the study of saturated and maximal nontrivial sets. Subsection \ref{subsec:SatSets} consists of a discussion of the concepts of saturated and $\alpha$-saturated sets. In Subsection \ref{subsec:MaxNontrivSets}, we discuss the maximal nontrivial sets. This is where we establish a relationship between the saturated and the maximal nontrivial sets and a generalized version of the implication-saturated sets that were introduced in \cite{Batens1980}. Section \ref{sec:Lind} starts with the definitions of the four Lindenbaum-type logical structures. As mentioned earlier, the study of the Lindenbaum-type logical structures is built upon the one in \cite{Beziau1999}, so here we map the terminologies from the latter to the ones introduced here. Then we split the subsequent discussion into four subsections - Subsection \ref{subsec:LindII} consists of a study of the Lindenbaum-II-type logical structures, and Subsection \ref{subsec:LindIV} is devoted to the Lindenbaum-IV-type logical structures, including a characterization theorem for these. The Lindenbaum-I- and Lindenbaum-III-type logical structures are considered in Subsection \ref{subsec:LindI/III}; this also includes a characterization theorem for the Lindenbaum-III-type logical structures. We discuss the relationships and separations between the four Lindenbaum-types in Subsection \ref{subsec:Lind-rel}. Next, in Section \ref{sec:T_R_L}, the possible relationships between the classes of logical structures of the Tarski- and the various Lindenbaum-types are investigated. We then proceed to study the logical structures that are of both Tarski- and a Lindenbaum-type in Section \ref{sec:Tar-Lind}. Here, we also expound on the relationships and separations between the resulting four Tarski-Lindenbaum-type logical structures. Section \ref{sec:TL-4} is devoted to the characterization, adequacy, minimality, and representation theorems for logical structures that are, at the same time, of Tarski- and Lindenbaum-IV/II-type. Finally, in the concluding section \ref{sec:Concl}, some directions for future research are indicated.

\subsection{Fundamental notions}\label{subsec:Intro/Fund_Not}

\begin{definition}\label{def:logical_structure}
A \emph{logical structure} is a pair $(\mathscr{L},\vdash)$, where $\mathscr{L}$ is a set and $\vdash\,\subseteq\mathcal{P}(\mathscr{L})\times \mathscr{L}$. Here $\mathcal{P}(\mathscr{L})$ denotes the power set of $\mathscr{L}$.
\end{definition}

In this paper, we will always assume that $\mathscr{L}$ and $\vdash$ are nonempty.

\begin{remark}\label{rem:logicVlog_str}
As mentioned earlier, logical structures were defined in \cite{Beziau1994}. However, although the terms `logic' and `logical structure' have been considered synonymous in \cite{Beziau1994}, we distinguish between these here. The term logic is reserved for referring to those pairs $(\mathscr{L},\vdash)$, where $\mathscr{L}$ is a formula algebra of some language over a set of variables. We also do not use the term `formula' to refer to the elements of $\mathscr{L}$. This is mainly to avoid introducing any potentially psychologically binding intuition about the elements of $\mathscr{L}$ (see \cite[Section 3.2]{Beziau1994} for further discussion on this matter).

It is, however, easy to see that any logic $(\mathscr{L},\vdash)$ has an underlying logical structure, the one obtained by ignoring the nature of, and any relationship between, the elements of $\mathscr{L}$. The underlying logical structure of a logic $(\mathscr{L},\vdash)$ will be referred to as such, and denoted by $(\lvert\mathscr{L}\rvert,\vdash)$. To avoid any confusion, it will be clearly indicated at the outset of every discussion whether $(\mathscr{L},\vdash)$ is a logic or a logical structure.
\end{remark} 

Let $(\mathscr{L},\vdash)$ be a logical structure and $\Gamma\subseteq\mathscr{L}$. Then $C_{\vdash}^{\mathscr{L}}(\Gamma)$ denotes the set of all elements of $\mathscr{L}$ that are $\vdash$-related to $\Gamma$, i.e. 
\[
C_{\vdash}^{\mathscr{L}}(\Gamma)=\{\alpha\in \mathscr{L}:\Gamma\vdash \alpha\}.
\]
The superscript $\mathscr{L}$ on $C_\vdash$ is dropped when the logical structure under consideration is clear. 

It is easy to see that given a $\vdash$, $C_\vdash$, as described above is an operator from $\mathcal{P}(\mathscr{L})$ to $\mathcal{P}(\mathscr{L})$. Conversely, given an operator $C:\mathcal{P}(\mathscr{L})\to\mathcal{P}(\mathscr{L})$, we can define a relation $\vdash\,\subseteq\mathcal{P}(\mathscr{L})\times\mathscr{L}$ such that $C=C_\vdash$ as follows. For all $\Gamma\cup\{\alpha\}\subseteq\mathscr{L}$, $\Gamma\vdash\alpha$ iff $\alpha\in C(\Gamma)$. We will, on some occasions in this article, define a $\vdash\,\subseteq\mathcal{P}(\mathscr{L})\times\mathscr{L}$ such that $C_\vdash=C$, for some operator $C:\mathcal{P}(\mathscr{L})\to\mathcal{P}(\mathscr{L})$ in this very sense.

The following four definitions are obtained by unraveling \cite[Definition 2]{Beziau2001}. We have chosen to divide the content for the sake of clarity. 

\begin{definition}\label{def:bival}
Let $\mathscr{L}$ be a set. Then any function $v\in \{0,1\}^\mathscr{L}$ is said to be an \emph{$\mathscr{L}$-bivaluation}, or simply a \emph{bivaluation}, when the set under consideration is understood.
\end{definition}

Since $\mathscr{L}$-bivaluations are functions from $\mathscr{L}$ into $\{0,1\}$ (treated just as a set of two distinct elements), these can be understood as characteristic functions of subsets of $\mathscr{L}$ as well. Thus, given any $\Sigma\subseteq\mathscr{L}$, the characteristic function $\chi_\Sigma$ is a bivaluation and conversely, any bivaluation $v$ is the characteristic function of $\{\alpha\in\mathscr{L}\mid v(\alpha)=1\}\subseteq\mathscr{L}$. We will use this alternative description of bivaluations later in the paper.

\begin{definition}\label{def:satisfy}
Suppose $\mathscr{L}$ is a set. An $\mathscr{L}$-bivaluation $v$ is said to \emph{satisfy} $\Gamma\subseteq\mathscr{L}$ if $v(\alpha)=1$ for every $\alpha\in\Gamma$.

Now, suppose $\mathcal{V}$ is a set of $\mathscr{L}$-bivaluations. We say that $\mathcal{V}$ \emph{satisfies} $\Gamma$ if every $v\in\mathcal{V}$ satisfies $\Gamma$.
\end{definition}

Given a set $\mathscr{L}$, a logical structure can also be obtained using a set of $\mathscr{L}$-bivaluations $\mathcal{V}\subseteq \{0,1\}^\mathscr{L}$. The \emph{logical structure induced by} $\mathcal{V}$ is the pair $(\mathscr{L},\vdash_\mathcal{V})$, where $\vdash_{\mathcal{V}}\,\subseteq\mathcal{P}(\mathscr{L})\times\mathscr{L}$ is defined as follows. For any $\Gamma\cup\{\alpha\}\subseteq\mathscr{L}$,
\[
\Gamma\vdash_{\mathcal{V}}\alpha\,\iff\,\hbox{for all }v\in\mathcal{V},\,\hbox{ if }v\hbox{ satisfies }\Gamma\hbox{ then }v(\alpha)=1.
\]

\begin{definition}\label{def:sound_complete}
Let $(\mathscr{L},\vdash)$ be a logical structure and $\mathcal{V}$ a set of bivaluations. We say that $(\mathscr{L},\vdash)$ is \emph{sound} with respect to $(\mathscr{L},\vdash_{\mathcal{V}})$ if for all $\Gamma\cup\{\alpha\}\subseteq\mathscr{L}$, $\Gamma\vdash\alpha\implies \Gamma\vdash_{\mathcal{V}}\alpha$, i.e.\ $\vdash\,\subseteq\,\vdash_\mathcal{V}$.

$(\mathscr{L},\vdash)$ is said to be \emph{complete} with respect to $(\mathscr{L},\vdash_{\mathcal{V}})$ if for all $\Gamma\cup\{\alpha\}\subseteq\mathscr{L}$, $\Gamma\vdash_{\mathcal{V}}\alpha\implies \Gamma\vdash\alpha$, i.e.\ $\vdash_\mathcal{V}\,\subseteq\,\vdash$.

$(\mathscr{L},\vdash)$ is called \emph{adequate} with respect to $(\mathscr{L},\vdash_{\mathcal{V}})$ if it is both sound and complete with respect to $(\mathscr{L},\vdash_{\mathcal{V}})$, i.e.\ if $\vdash\,=\,\vdash_\mathcal{V}$.
\end{definition}

\section{Tarski-type Logical Structures}\label{sec:Tarski}

In this section, we describe the Tarski-type logical structures and prove a characterization theorem for the same. 

As mentioned earlier, these are defined as normal logics in \cite{Beziau2001}. We begin with the definition, which is essentially the same as that of these normal logics.

\begin{definition}\label{def:Tarski-type}
A logical structure $(\mathscr{L},\vdash)$ is said to be of \emph{Tarski-type}, or a \emph{Tarski-type logical structure}, if $\vdash$ satisfies the following properties. 
\begin{enumerate}[label=(\roman*)]
\item For all $\Gamma\subseteq\mathscr{L}$ and $\alpha\in \mathscr{L}$, if $\alpha\in \Gamma$ then $\Gamma\vdash\alpha$. (Reflexivity)
\item For all $\Gamma,\Sigma\subseteq\mathscr{L}$ and $\alpha\in \mathscr{L}$, if $\Gamma\vdash\alpha$ and $\Gamma\subseteq\Sigma$ then $\Sigma\vdash\alpha$. (Monotonicity)
\item For all $\Gamma,\Sigma\subseteq\mathscr{L}$ and $\alpha\in \mathscr{L}$, if $\Gamma\vdash\beta$ for all $\beta\in \Sigma$ and $\Sigma\vdash\alpha$ then $\Gamma\vdash\alpha$. (Transitivity)
\end{enumerate}
If $\vdash$ is reflexive/ monotone/ transitive, the logical structure $(\mathscr{L},\vdash)$ is also said to be reflexive/ monotone/ transitive, respectively.
\end{definition}

\begin{definition}\label{def:closed}
Let $(\mathscr{L},\vdash)$ be a logical structure and $\Gamma\subseteq\mathscr{L}$. $\Gamma$ is said to be \emph{closed} in $\mathscr{L}$ if $C_{\vdash}(\Gamma)=\Gamma$, i.e.\ if for all $\alpha\in \mathscr{L}$, $\Gamma\vdash\alpha$ iff $\alpha\in \Gamma$.
\end{definition}

The above definition of a Tarski-type logical structure can be rephrased in terms of $C_{\vdash}$ as follows. A logical structure $(\mathscr{L},\vdash)$ is of Tarski-type if the following conditions are satisfied. 
\begin{enumerate}[label=(\roman*)]
\item For all $\Gamma\subseteq \mathscr{L}$, $\Gamma\subseteq C_{\vdash}(\Gamma)$. (Reflexivity)
\item For all $\Gamma,\Sigma\subseteq \mathscr{L}$, if $\Gamma\subseteq \Sigma$ then $C_{\vdash}(\Gamma)\subseteq C_{\vdash}(\Sigma)$. (Monotonicity)
\item For all $\Gamma,\Sigma\subseteq \mathscr{L}$, if $\Sigma\subseteq {C_{\vdash}}(\Gamma)$ then ${C_{\vdash}}(\Sigma)\subseteq {C_{\vdash}}(\Gamma)$. (Transitivity)
\end{enumerate}
It may be noted here that a similar definition of a Tarskian consequence operator has been given in \cite{Tsuji1998}. We now adapt a couple of results connecting the Tarskian consequence operators with the concept of a \emph{Suszko set} of bivaluations from \cite{Tsuji1998}.

\begin{definition}\label{def:Suszko}\cite[Subsection 3.1]{Tsuji1998}
Let $(\mathscr{L},\vdash)$ be a logical structure. A nonempty set of bivaluations $\mathcal{V}$ is called a \emph{Suszko set for $(\mathscr{L},\vdash)$}, or simply a \emph{Suszko set} (when the logical structure is clear from the context), if $(\mathscr{L},\vdash)$ is adequate with respect to $(\mathscr{L},\vdash_{\mathcal{V}})$. In this case, we will also call $(\mathscr{L},\vdash)$ to be \emph{bivalent}.
\end{definition}

Now, by \cite[Corollary 1]{Tsuji1998}, if $(\mathscr{L},\vdash)$ is a Tarski-type logical structure, then there exists a Suszko set for $(\mathscr{L},\vdash)$.

Conversely, by \cite[Corollary 2]{Tsuji1998}, if there exists a Suszko set for a logical structure $(\mathscr{L},\vdash)$, then $\vdash$ is reflexive. Moreover, it is, in fact, straightforward to verify that, given any set $\mathscr{L}$ and a nonempty $\mathcal{V}\subseteq\{0,1\}^\mathscr{L}$, $(\mathscr{L},\vdash_{\mathcal{V}})$ is of Tarski-type. So in particular, if there exists a Suszko set $\mathcal{V}$ for a logical structure $(\mathscr{L},\vdash)$, then $\vdash\,=\,\vdash_\mathcal{V}$, and so $(\mathscr{L},\vdash)$ must be of Tarski-type. Thus we get the following representation theorem for Tarski-type logical structures.

\begin{theorem}[\textsc{Representation Theorem for Tarski-type}]\label{thm:Rep_Tarski}
Suppose $(\mathscr{L},\vdash)$ is a logical structure. Then $(\mathscr{L},\vdash)$ is of Tarski-type iff there exists a Suszko set for $(\mathscr{L},\vdash)$.
\end{theorem}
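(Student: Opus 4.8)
The plan is to prove the two implications separately. Since nearly all the ingredients have been assembled in the discussion preceding the statement, the task is chiefly one of assembly, with one direction resting on an external citation and the other on a direct verification.

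For the forward implication, I would assume $(\mathscr{L},\vdash)$ is of Tarski-type and simply invoke \cite[Corollary 1]{Tsuji1998}, which guarantees the existence of a Suszko set for any Tarski-type logical structure. This is the step carrying the real content, since producing a set of bivaluations whose induced consequence relation recaptures $\vdash$ exactly is the substantive part; but that work is done externally, so nothing further is needed here.

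For the converse, suppose a Suszko set $\mathcal{V}$ exists for $(\mathscr{L},\vdash)$. By Definition \ref{def:Suszko} this means $(\mathscr{L},\vdash)$ is adequate with respect to $(\mathscr{L},\vdash_\mathcal{V})$, i.e.\ $\vdash\,=\,\vdash_\mathcal{V}$. The key observation to establish is that $\vdash_\mathcal{V}$ is of Tarski-type for \emph{every} choice of $\mathcal{V}$, and I would verify the three conditions directly from the definition of $\vdash_\mathcal{V}$. Reflexivity holds because any $v$ satisfying $\Gamma$ assigns $1$ to every element of $\Gamma$; monotonicity holds because a $v$ satisfying a larger set $\Sigma$ also satisfies any subset $\Gamma$; and transitivity holds because if $v$ satisfies $\Gamma$ while $\Gamma\vdash_\mathcal{V}\beta$ for all $\beta\in\Sigma$, then $v$ satisfies $\Sigma$, whence $v(\alpha)=1$ whenever $\Sigma\vdash_\mathcal{V}\alpha$. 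Combining this with the adequacy equality $\vdash\,=\,\vdash_\mathcal{V}$ yields that $(\mathscr{L},\vdash)$ is itself of Tarski-type.

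I do not anticipate a genuine obstacle here, as the converse reduces to routine checking. The only subtlety worth flagging is that the converse does \emph{not} follow from \cite[Corollary 2]{Tsuji1998} alone, which delivers only reflexivity; one additionally needs the observation that $\vdash_\mathcal{V}$ satisfies all three Tarski conditions, together with the equality $\vdash\,=\,\vdash_\mathcal{V}$ supplied by the definition of a Suszko set, in order to transfer the full Tarski-type property back to $\vdash$.
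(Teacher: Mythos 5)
Your proposal is correct and follows essentially the same route as the paper: the forward direction is delegated to \cite[Corollary 1]{Tsuji1998}, and the converse combines the observation that $(\mathscr{L},\vdash_{\mathcal{V}})$ is of Tarski-type for any nonempty $\mathcal{V}$ (which the paper states as ``straightforward to verify'' and you spell out) with the adequacy equality $\vdash\,=\,\vdash_{\mathcal{V}}$. Your remark that \cite[Corollary 2]{Tsuji1998} alone yields only reflexivity and so does not suffice for the converse matches the paper's own treatment, which cites that corollary but then rests the argument on the direct verification.
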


\begin{remark}
A more general version of the above theorem, although in a many-valued context, is available in \cite{Chakraborty_Dutta2019}.
\end{remark}

We now come to our main result in this section, the characterization of Tarski-type logical structures. We begin with the definition of strongly closed sets in a given logical structure.

\begin{definition}\label{def:str_closed}
Let $(\mathscr{L},\vdash)$ be a logical structure and $\Gamma\subseteq\mathscr{L}$. Then $\Gamma$ is called \emph{strongly closed} in $\mathscr{L}$ if it satisfies the following conditions.
\begin{enumerate}[label=(\roman*)]
\item For all $\alpha\in\Gamma$, $\Gamma\vdash\alpha$, i.e.\ $\Gamma\subseteq C_\vdash(\Gamma)$.
\item For all $\alpha\in\mathscr{L}$, if there exists a $\Gamma^\prime\subseteq\Gamma$ with $\Gamma^\prime\vdash\alpha$ then $\alpha\in\Gamma$.
\end{enumerate}
\end{definition}

\begin{remark}\label{rem:closed-str_closed}
It is easy to see that every strongly closed set is closed, but the converse is not true for arbitrary logical structures. However, if the logical structure under consideration is monotone, then the two concepts are equivalent.
\end{remark}

\begin{theorem}[\textsc{Characterization of Tarski-type}]\label{thm:Char_Tar}
Let $(\mathscr{L},\vdash)$ be a logical structure. Then the following statements are equivalent.

\begin{enumerate}[label=(\arabic*)]
\item $(\mathscr{L},\vdash)$ is of Tarski-type.
\item There exists a Suszko set for $(\mathscr{L},\vdash)$.
\item For all $\Gamma\cup\{\alpha\}\subseteq\mathscr{L}$ with $\Gamma\nvdash\alpha$, there exists a strongly closed extension $\Sigma\supseteq\Gamma$ such that $\Sigma\nvdash\alpha$.
\item For all $\Gamma,\Sigma\subseteq \mathscr{L}$, $\Sigma\subseteq{C_{\vdash}}(\Gamma)$ iff ${C_{\vdash}}(\Sigma)\subseteq {C_{\vdash}}(\Gamma)$.
\end{enumerate}
\end{theorem}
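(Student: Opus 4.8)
The plan is to build everything around the already-established equivalence (1)$\iff$(2), which is precisely the Representation Theorem (Theorem \ref{thm:Rep_Tarski}). It then suffices to prove (1)$\iff$(4) directly and to close the cycle (1)$\implies$(3)$\implies$(2); combined with (2)$\implies$(1), this yields all four equivalences.

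For (1)$\iff$(4) I would work entirely with the $C_\vdash$-formulation of the Tarski conditions. In the direction (1)$\implies$(4), the forward half of the biconditional in (4) is exactly transitivity, while its reverse half follows from reflexivity: if $C_\vdash(\Sigma)\subseteq C_\vdash(\Gamma)$, then $\Sigma\subseteq C_\vdash(\Sigma)\subseteq C_\vdash(\Gamma)$. For (4)$\implies$(1), I would recover the three conditions one at a time. Reflexivity comes from instantiating $\Sigma=\Gamma$ in (4), the right-hand side $C_\vdash(\Gamma)\subseteq C_\vdash(\Gamma)$ being trivially true; transitivity is just the forward half of (4); and monotonicity follows by noting that $\Gamma\subseteq\Sigma$ gives $\Gamma\subseteq\Sigma\subseteq C_\vdash(\Sigma)$ via reflexivity, so the forward half of (4), applied with the two sets interchanged, yields $C_\vdash(\Gamma)\subseteq C_\vdash(\Sigma)$.

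For (1)$\implies$(3), given $\Gamma\nvdash\alpha$ the natural candidate is $\Sigma=C_\vdash(\Gamma)$. Reflexivity gives $\Gamma\subseteq\Sigma$, and I would verify that $\Sigma$ is strongly closed: condition (i) of Definition \ref{def:str_closed} is reflexivity again, while condition (ii) follows from transitivity, since any $\Gamma'\subseteq C_\vdash(\Gamma)$ satisfies $C_\vdash(\Gamma')\subseteq C_\vdash(\Gamma)$. Being strongly closed, $\Sigma$ is in particular closed (Remark \ref{rem:closed-str_closed}); since $\alpha\notin C_\vdash(\Gamma)=\Sigma$, closedness gives $\Sigma\nvdash\alpha$, as required.

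For (3)$\implies$(2) I would take $\mathcal{V}$ to be the set of characteristic functions $\chi_\Sigma$ of all strongly closed sets $\Sigma$ (that $\mathcal{V}$ is nonempty is easily checked) and argue that $\vdash\,=\,\vdash_{\mathcal{V}}$. Soundness is automatic from the definition of strong closure: if $\Gamma\vdash\alpha$ and $\chi_\Sigma$ satisfies $\Gamma$, i.e.\ $\Gamma\subseteq\Sigma$, then condition (ii) of strong closure forces $\alpha\in\Sigma$, so $\chi_\Sigma(\alpha)=1$. Completeness is exactly where hypothesis (3) does the work: given $\Gamma\nvdash\alpha$, statement (3) supplies a strongly closed $\Sigma\supseteq\Gamma$ with $\Sigma\nvdash\alpha$, and condition (i) of strong closure forces $\alpha\notin\Sigma$ (else $\Sigma\vdash\alpha$), so $\chi_\Sigma$ satisfies $\Gamma$ yet assigns $\alpha$ the value $0$, witnessing $\Gamma\nvdash_{\mathcal{V}}\alpha$. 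Hence $\mathcal{V}$ is a Suszko set and (2) holds. The main obstacle, and the conceptual heart of the theorem, is precisely this last construction: recognizing that strongly closed sets are the right objects to read off as bivaluations, so that soundness holds for free while (3) is exactly the separation property needed to deliver completeness.
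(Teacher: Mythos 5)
Your proof is correct, and on two of the three legs it coincides with the paper's: (1)$\iff$(4) is argued exactly as in the paper (transitivity gives the forward half of the biconditional, reflexivity the reverse, and monotonicity is recovered from reflexivity plus the forward half), and (1)$\implies$(3) uses the same witness $\Sigma=C_\vdash(\Gamma)$ (you verify strong closure directly from transitivity, where the paper first shows $C_\vdash(\Gamma)$ is closed and then invokes monotonicity and Remark \ref{rem:closed-str_closed}; this difference is cosmetic). The genuine divergence is in how (3) is shown to imply Tarski-type. The paper proves (3)$\implies$(1) head-on: it derives reflexivity, monotonicity and transitivity one at a time, each by a contradiction argument against a strongly closed extension supplied by (3). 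You instead prove (3)$\implies$(2), taking $\mathcal{V}=\{\chi_\Sigma:\Sigma\hbox{ strongly closed}\}$: condition (ii) of Definition \ref{def:str_closed} makes soundness automatic, hypothesis (3) is exactly completeness, so $\mathcal{V}$ is a Suszko set, and (2)$\implies$(1) of Theorem \ref{thm:Rep_Tarski} finishes. Both routes are valid. The paper's is self-contained and purely syntactic, never touching bivaluations; yours leans on the Representation Theorem (whose (2)$\implies$(1) direction the paper imports from Tsuji's results together with the observation that every $\vdash_\mathcal{V}$ is of Tarski-type), but in exchange it is conceptually tidier---it exhibits statement (3) as precisely the completeness property of strongly-closed-set bivaluations---and it anticipates the paper's own later machinery, since your $\mathcal{V}$ is the set $\mathbb{SCS}$ of Lemma \ref{lem:LS_soundness} with the saturation requirement dropped. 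One point you dismiss as ``easily checked'' deserves a sentence: nonemptiness of $\mathcal{V}$ (required by Definition \ref{def:Suszko}) does not follow from (3) when $\vdash$ is total, because (3) is then vacuous; in that case $\mathscr{L}$ itself is strongly closed, so $\mathcal{V}\neq\emptyset$, and soundness alone already gives $\vdash\,=\,\vdash_\mathcal{V}$.
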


\begin{proof}
The equivalence of (1) and (2) constituted Theorem \ref{thm:Rep_Tarski}. Thus to complete the proof, it is enough to show that statements (3) and (4) are equivalent to (1). 

\underline{(1)$\implies$(3)}: Suppose $\Gamma\cup\{\alpha\}\subseteq\mathscr{L}$ such that $\Gamma\nvdash\alpha$. We will show that $C_{\vdash}(\Gamma)$ is a strongly closed extension of $\Gamma$ such that $C_\vdash(\Gamma)\nvdash\alpha$. 

Since $\vdash$ is reflexive, $\Gamma\subseteq C_\vdash(\Gamma)$. Then by monotonicity of $\vdash$, we have $C_\vdash(\Gamma)\subseteq C_\vdash(C_\vdash(\Gamma))$. Moreover, by transitivity of $\vdash$, it follows that, $C_\vdash(C_\vdash(\Gamma))\subseteq C_\vdash(\Gamma)$. Thus $C_\vdash(C_\vdash(\Gamma))=C_\vdash(\Gamma)$, and hence $C_\vdash(\Gamma)$ is closed. Since $\vdash$ is monotone, by Remark \ref{rem:closed-str_closed}, we can conclude that $C_\vdash(\Gamma)$ is also strongly closed.

Finally, to prove that $C_\vdash(\Gamma)\nvdash\alpha$, we note that since $\Gamma\nvdash\alpha$, $\alpha\notin C_\vdash(\Gamma)$. This implies that $C_\vdash(\Gamma)\nvdash\alpha$ as $C_\vdash(\Gamma)$ is closed. Thus statement (3) holds.

\underline{(3)$\implies$(1)}: Let $\Gamma\subseteq \mathscr{L}$ and $\beta\in\Gamma$. We claim that $\Gamma\vdash\beta$. Suppose the contrary, i.e.\ $\Gamma\nvdash\beta$. Then there exists a strongly closed $\Sigma\supseteq\Gamma$ with $\Sigma\nvdash\beta$. Now, since $\Sigma$ is strongly closed, and hence closed,  $\Sigma\nvdash\beta$ implies that $\beta\notin\Sigma$. Thus $\beta\notin \Gamma$ as well, contrary to our hypothesis. Hence $\Gamma\vdash\beta$. Thus $\vdash$ is reflexive.

Next, let $\Gamma\subseteq\Sigma\subseteq\mathscr{L}$ and $\beta\in\mathscr{L}$ such that $\Gamma\vdash\beta$. We claim that $\Sigma\vdash\beta$. Suppose the contrary, i.e.\ $\Sigma\nvdash\beta$. Then there exists a strongly closed $\Delta\supseteq\Sigma$ with $\Delta\nvdash\beta$. Now, $\Gamma\subseteq\Delta$ with $\Gamma\vdash\beta$. So, $\Delta\vdash\beta$ since $\Delta$ is strongly closed. This contradiction proves that $\Sigma\vdash\beta$. Thus $\vdash$ is monotone.

Finally, let $\Gamma,\Sigma\subseteq \mathscr{L}$ and $\beta\in\mathscr{L}$ such that $\Gamma\vdash\gamma$ for all $\gamma\in \Sigma$, and $\Sigma\vdash\beta$. We claim that $\Gamma\vdash\beta$. Suppose the contrary, i.e.\ $\Gamma\nvdash\beta$. Then there exists a strongly closed set $\Delta\supseteq\Gamma$ with $\Delta\nvdash\beta$. Now, since $\Gamma\vdash\gamma$ for all $\gamma\in \Sigma$, $\Gamma\subseteq \Delta$ and $\Delta$ is strongly closed, it follows that $\gamma\in\Delta$ for all $\gamma\in\Sigma$. Thus $\Sigma\subseteq\Delta$. Now, since $\Sigma\vdash\beta$, we must have $\Delta\vdash\beta$, since $\Delta$ is strongly closed. This is a contradiction. Hence $\Gamma\vdash\beta$. So $\vdash$ is transitive.

Thus $(\mathscr{L},\vdash)$ is a Tarski-type logical structure.

We now show that statements (1) and (4) are equivalent using the definition of a Tarski-type logical structure in terms of $C_\vdash$.

\underline{(1)$\implies$(4)}: Let $\Gamma,\Sigma\subseteq\mathscr{L}$. Now, if $\Sigma\subseteq C_\vdash(\Gamma)$, then $C_\vdash(\Sigma)\subseteq C_\vdash(\Gamma)$ by transitivity. Conversely, suppose ${C_{\vdash}}(\Sigma)\subseteq {C_{\vdash}}(\Gamma)$. Then by reflexivity, we get $\Sigma \subseteq {C_{\vdash}}(\Sigma)\subseteq {C_{\vdash}}(\Gamma)$. Thus $\Sigma\subseteq C_\vdash(\Gamma)$ iff ${C_{\vdash}}(\Sigma)\subseteq {C_{\vdash}}(\Gamma)$. Hence statement (4) holds.

\underline{(4)$\implies$(1)}: We first note that reflexivity and transitivity follow immediately from the assumed condition. To prove monotonicity, let $\Sigma\subseteq\Gamma\subseteq\mathscr{L}$. By reflexivity, we have $\Gamma\subseteq C_{\vdash}(\Gamma)$. Thus $\Sigma\subseteq C_{\vdash}(\Gamma)$. Then by the assumed condition, it follows that $C_{\vdash}(\Sigma)\subseteq C_{\vdash}(\Gamma)$. Thus $(\mathscr{L},\vdash)$ is a Tarski-type logical structure.
\end{proof}

\begin{remark}
The statement (3) in the above theorem can be tightened to show that $(\mathscr{L},\vdash)$ is of Tarski-type iff for all $\Gamma\cup\{\alpha\}\subseteq\mathscr{L}$ with $\Gamma\nvdash\alpha$, there exists a $\subseteq$-minimum strongly closed extension $\Delta\supseteq\Gamma$ such that $\Delta\nvdash\alpha$. The forward direction follows by choosing $\Delta=C_{\vdash}(\Gamma)$ as in the proof of (1)$\implies$(3) above. The converse is immediate.
\end{remark}

\section{Saturated Sets and Maximal Nontrivial Sets}\label{sec:Sat_MaxNontriv}

We devote this section to saturated and maximal nontrivial sets, which play crucial roles in the characterizations of Lindenbaum-type logical structures introduced in Section \ref{sec:Lind}. As mentioned earlier, the concept of $\alpha$-saturated sets has been described in \cite{Loparic_daCosta1984}, and in \cite{Beziau1994} as $a$-excessive theories.

\subsection{Saturated sets}\label{subsec:SatSets}

\begin{definition}\label{def:alpha-sat} 
Let $(\mathscr{L},\vdash)$ be a logical structure and $\Gamma\cup\{\alpha\}\subseteq\mathscr{L}$. $\Gamma$ is called \emph{$\alpha$-saturated} in $(\mathscr{L},\vdash)$ if $\Gamma\nvdash\alpha$ but for any $\beta\in \mathscr{L}\setminus\Gamma$, $\Gamma\cup\{\beta\}\vdash\alpha$.

$\Gamma$ is called \emph{saturated} in $(\mathscr{L},\vdash)$ if it is $\alpha$-saturated for some $\alpha\in \mathscr{L}$.
\end{definition}

\begin{definition}\label{def:relmax}\cite[Definition 7]{Beziau2001}
Let $(\mathscr{L},\vdash)$ be a logical structure and $\Gamma\cup\{\alpha\}\subseteq\mathscr{L}$. $\Gamma$ is said to be \emph{relatively maximal in $\alpha$} in $(\mathscr{L},\vdash)$ if $\Gamma\nvdash\alpha$ but for any $\Sigma\supsetneq\Gamma$, $\Sigma\vdash\alpha$.
\end{definition}

\begin{remark}\label{rem:alpha-sat}
Suppose $(\mathscr{L},\vdash)$ is a logical structure and $\Gamma\cup\{\alpha\}\subseteq\mathscr{L}$. Then the following observations are easy to check.
\begin{enumerate}[label=(\roman*)]
    \item If $\Gamma$ is relatively maximal in $\alpha$, then it is $\alpha$-saturated.
    \item If $(\mathscr{L},\vdash)$ is monotone, then every $\alpha$-saturated set is relatively maximal in $\alpha$.
\end{enumerate}
\end{remark}

\begin{example}\label{exm:cpc_alpha-sat}
Let $(\mathscr{L}_\mathbf{CL},\vdash_{\mathbf{CL}})$ denote the classical propositional logic. Then it is a straightforward observation that every maximal consistent set $\Sigma$ of $(\mathscr{L}_\mathbf{CL},\vdash_{\mathbf{CL}})$ is $\alpha$-saturated, and also relatively maximal in $\alpha$, in the underlying logical structure $(\lvert\mathscr{L}_\mathbf{CL}\rvert,\vdash_{\mathbf{CL}})$ for each $\alpha\notin\Sigma$.
\end{example} 

\begin{definition}{\label{def:cut,mcut}}
A logical structure $(\mathscr{L},\vdash)$ is said to satisfy \textit{cut} if for all $\Gamma\cup\{\alpha,\beta\}\subseteq \mathscr{L}$, $\Gamma\vdash\alpha$ and $\Gamma\cup\{\alpha\}\vdash\beta$ implies that $\Gamma\vdash\beta$.

$(\mathscr{L},\vdash)$ is said to satisfy \textit{mixed-cut} if for all $\Gamma\cup\Sigma\cup\{\alpha,\beta\}\subseteq \mathscr{L}$, $\Gamma\vdash\alpha$ and $\Sigma\cup\{\alpha\}\vdash\beta$ implies that $\Gamma\cup\Sigma\vdash\beta$.
\end{definition} 

\begin{remark}
It is straightforward to see that cut is a special case of mixed-cut, i.e. if a logical structure satisfies mixed-cut, then it also satisfies cut. 

The above notions of cut and mixed-cut have been adopted from those in \cite{Hlobil2018}.
\end{remark}

\begin{theorem}{\label{thm:cut=>closed_sat}}
Let $(\mathscr{L},\vdash)$ be a logical structure satisfying cut. Then every saturated set is closed.
\end{theorem}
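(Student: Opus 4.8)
The plan is to take a saturated set $\Gamma$ --- say it is $\alpha$-saturated --- and prove $C_\vdash(\Gamma)=\Gamma$ by handling the two inclusions separately. All of the content sits in the inclusion $C_\vdash(\Gamma)\subseteq\Gamma$, and this is exactly where cut enters; the reverse inclusion $\Gamma\subseteq C_\vdash(\Gamma)$ is merely reflexivity restricted to $\Gamma$, which I will comment on at the end. For $C_\vdash(\Gamma)\subseteq\Gamma$ I would argue by contradiction. Suppose some $\beta$ satisfies $\Gamma\vdash\beta$ but $\beta\notin\Gamma$. Then $\beta\in\mathscr{L}\setminus\Gamma$, so the defining property of $\alpha$-saturation in Definition \ref{def:alpha-sat} yields $\Gamma\cup\{\beta\}\vdash\alpha$. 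At this point I have precisely the two premises of cut with $\beta$ in the role of the cut formula: $\Gamma\vdash\beta$ and $\Gamma\cup\{\beta\}\vdash\alpha$. Applying cut (Definition \ref{def:cut,mcut}) with context $\Gamma$, cut formula $\beta$, and conclusion $\alpha$ gives $\Gamma\vdash\alpha$, contradicting $\Gamma\nvdash\alpha$, the other half of $\alpha$-saturation. Hence every $\beta$ with $\Gamma\vdash\beta$ already lies in $\Gamma$.

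The crux here is simply recognizing that the saturation conclusion $\Gamma\cup\{\beta\}\vdash\alpha$ is literally the second hypothesis that cut demands, so this direction is a one-application-of-cut argument and I do not anticipate any difficulty with it. Everything reduces to lining up the roles in Definition \ref{def:cut,mcut} correctly.

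The step that actually requires care --- and what I would flag as the main obstacle --- is the reverse inclusion $\Gamma\subseteq C_\vdash(\Gamma)$, i.e.\ that each member of $\Gamma$ is itself $\vdash$-derivable from $\Gamma$. This is reflexivity on $\Gamma$, and it is \emph{not} a consequence of cut and saturation alone, since cut only ever manufactures new consequences out of existing ones and thus cannot supply a base case. Consequently, to obtain the full equality $C_\vdash(\Gamma)=\Gamma$ required by Definition \ref{def:closed} one must additionally know that $\vdash$ is reflexive (Definition \ref{def:Tarski-type}(i)), under which $\Gamma\subseteq C_\vdash(\Gamma)$ is immediate; I would therefore invoke reflexivity at this point. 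Absent reflexivity, the honest reading of the result is that a saturated set is closed under consequence in the sense $C_\vdash(\Gamma)\subseteq\Gamma$, and this weaker (but genuinely cut-driven) conclusion is exactly what the argument in the first paragraph delivers.
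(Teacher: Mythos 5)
Your core argument coincides with the paper's proof: take a $\beta$-saturated $\Sigma$, suppose $\Sigma\vdash\alpha$ with $\alpha\notin\Sigma$, use saturation to get $\Sigma\cup\{\alpha\}\vdash\beta$, and apply cut to contradict $\Sigma\nvdash\beta$. Where the two texts part ways is on the inclusion $\Gamma\subseteq C_\vdash(\Gamma)$, and there your caution is justified rather than excessive. The paper's proof opens by asserting that if $\Sigma$ is not closed then there exists $\alpha$ with $\Sigma\vdash\alpha$ and $\alpha\notin\Sigma$; under Definition \ref{def:closed} this tacitly equates non-closure with the failure of $C_\vdash(\Sigma)\subseteq\Sigma$ and never treats the other way closure can fail, namely some $\alpha\in\Sigma$ with $\Sigma\nvdash\alpha$. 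So the paper proves exactly what your first paragraph proves and silently skips the half you flagged.

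Your suspicion that this half is genuinely underivable from cut and saturation is correct. Consider $\mathscr{L}=\{a,b\}$ with $\vdash$ consisting of the single pair $(\{a,b\},b)$. Cut holds: the only way to satisfy the premise $\Gamma\vdash\alpha$ is $\Gamma=\{a,b\}$, $\alpha=b$, which forces $\beta=b$ in the second premise, and the required conclusion $\{a,b\}\vdash b$ holds. The set $\{a\}$ is $b$-saturated ($\{a\}\nvdash b$, and $\{a\}\cup\{b\}=\{a,b\}\vdash b$), yet $C_\vdash(\{a\})=\emptyset\neq\{a\}$, so it is not closed. Hence, under the paper's own definition of ``closed,'' the theorem is false as stated; and since reflexivity is not among its hypotheses, you cannot invoke it as you tentatively suggest --- your weaker conclusion $C_\vdash(\Gamma)\subseteq\Gamma$ is all that is available. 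The gap is not pedantic: Corollary \ref{cor:cut=>pRL} uses closedness of $\mathscr{L}$ to infer $\mathscr{L}\vdash\alpha$ from $\alpha\in\mathscr{L}$ (and indeed fails in the example above, where $C_\vdash(\mathscr{L})=\{b\}$), and Theorem \ref{thm:cutT2Lind=>R} likewise invokes the direction $\alpha\in\Sigma\Rightarrow\Sigma\vdash\alpha$ for saturated $\Sigma$. In short, your proposal reproduces the paper's argument where it is sound and correctly diagnoses the precise point at which the paper's proof is broken.
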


\begin{proof}
Let $\Sigma\subseteq\mathscr{L}$ be a saturated set. So, $\Sigma$ is $\beta$-saturated for some $\beta\in\mathscr{L}$. If possible, suppose $\Sigma$ is not closed. Then there exists $\alpha\in\mathscr{L}$ such that $\Sigma\vdash\alpha$ but $\alpha\notin \Sigma$. Now, since $\Sigma$ is $\beta$-saturated and $\alpha\notin \Sigma$, we have $\Sigma\cup\{\alpha\}\vdash\beta$. Thus $\Sigma\vdash \alpha$ and $\Sigma\cup\{\alpha\}\vdash \beta$. Then by cut, it follows that $\Sigma\vdash \beta$. This is a contradiction as $\Sigma$ is $\beta$-saturated. Hence $\Sigma$ must be closed.
\end{proof}

\begin{corollary}{\label{cor:cut=>pRL}}
Let $(\mathscr{L},\vdash)$ be a logical structure that satisfies cut. Then $C_\vdash(\mathscr{L})=\mathscr{L}$.
\end{corollary}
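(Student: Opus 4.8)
The plan is to reduce the statement to the theorem just proved, Theorem~\ref{thm:cut=>closed_sat}, via a short proof by contradiction. Since $C_\vdash(\mathscr{L})\subseteq\mathscr{L}$ holds trivially for any logical structure (the operator maps into $\mathcal{P}(\mathscr{L})$), the content of the corollary is the reverse inclusion, i.e.\ that $\mathscr{L}\vdash\alpha$ for every $\alpha\in\mathscr{L}$. So I would suppose, toward a contradiction, that there is some $\alpha\in\mathscr{L}$ with $\mathscr{L}\nvdash\alpha$.

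The key observation---and really the only nontrivial point---is that $\mathscr{L}$ is then $\alpha$-saturated in the sense of Definition~\ref{def:alpha-sat}. Indeed, $\alpha$-saturation of a set $\Gamma$ requires $\Gamma\nvdash\alpha$ together with $\Gamma\cup\{\beta\}\vdash\alpha$ for every $\beta\in\mathscr{L}\setminus\Gamma$. Taking $\Gamma=\mathscr{L}$, the first conjunct is exactly our assumption $\mathscr{L}\nvdash\alpha$, while the second conjunct holds vacuously because $\mathscr{L}\setminus\mathscr{L}=\emptyset$. Hence $\mathscr{L}$ is $\alpha$-saturated, and in particular saturated.

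Now I would invoke Theorem~\ref{thm:cut=>closed_sat}: since $(\mathscr{L},\vdash)$ satisfies cut and $\mathscr{L}$ is saturated, $\mathscr{L}$ is closed, i.e.\ $C_\vdash(\mathscr{L})=\mathscr{L}$. But then $\alpha\in\mathscr{L}=C_\vdash(\mathscr{L})$ yields $\mathscr{L}\vdash\alpha$, contradicting the choice of $\alpha$. This contradiction forces $\mathscr{L}\vdash\alpha$ for every $\alpha\in\mathscr{L}$, and therefore $C_\vdash(\mathscr{L})=\mathscr{L}$.

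There is essentially no difficult step here; the whole argument hinges on noticing that the universally quantified clause in the definition of $\alpha$-saturation is vacuous when $\Gamma=\mathscr{L}$, so that $\mathscr{L}$ automatically qualifies as a saturated set the moment it fails to derive something. The cut hypothesis is used only indirectly, through Theorem~\ref{thm:cut=>closed_sat}, so no fresh appeal to cut is needed in the body of the argument.
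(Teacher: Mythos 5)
Your proof is correct and is essentially identical to the paper's own argument: both assume $\mathscr{L}\nvdash\alpha$ for some $\alpha$, note that $\mathscr{L}$ is then (vacuously) saturated, apply Theorem~\ref{thm:cut=>closed_sat} to conclude $\mathscr{L}$ is closed, and derive the contradiction $\mathscr{L}\vdash\alpha$. Your write-up merely makes explicit the vacuity of the second clause of Definition~\ref{def:alpha-sat} for $\Gamma=\mathscr{L}$, which the paper leaves implicit.
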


\begin{proof}
Suppose $C_\vdash(\mathscr{L})\neq\mathscr{L}$. Then there exists an $\alpha\in\mathscr{L}$ such that $\mathscr{L}\nvdash\alpha$. This implies that $\mathscr{L}$ is saturated. Then as $(\mathscr{L},\vdash)$ satisfies cut, it follows by the above theorem that $\mathscr{L}$ is closed. However, this implies that $\mathscr{L}\vdash\alpha$, since $\alpha\in\mathscr{L}$. This is a contradiction. Hence $C_\vdash(\mathscr{L})=\mathscr{L}$.
\end{proof}

\begin{theorem}{\label{thm:mcut=>str-closed_sat}}
Let $(\mathscr{L},\vdash)$ be a logical structure satisfying mixed-cut. Then every saturated set is strongly closed.
\end{theorem}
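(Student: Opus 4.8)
The plan is to verify directly the two defining conditions of a strongly closed set for an arbitrary saturated $\Sigma$, which we may take to be $\beta$-saturated for some $\beta\in\mathscr{L}$. The first condition, $\Sigma\subseteq C_\vdash(\Sigma)$, comes essentially for free: since mixed-cut subsumes cut (as noted in the remark following Definition \ref{def:cut,mcut}), Theorem \ref{thm:cut=>closed_sat} already guarantees that $\Sigma$ is closed, i.e.\ $C_\vdash(\Sigma)=\Sigma$; in particular $\Sigma\subseteq C_\vdash(\Sigma)$, so condition (i) of Definition \ref{def:str_closed} holds.

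The real content lies in condition (ii): whenever some $\Gamma'\subseteq\Sigma$ satisfies $\Gamma'\vdash\alpha$, we must conclude $\alpha\in\Sigma$. I would argue by contradiction, assuming $\alpha\notin\Sigma$. Since $\Sigma$ is $\beta$-saturated and $\alpha\in\mathscr{L}\setminus\Sigma$, the saturation condition yields $\Sigma\cup\{\alpha\}\vdash\beta$. At this point I have exactly the two premises that mixed-cut consumes, namely $\Gamma'\vdash\alpha$ and $\Sigma\cup\{\alpha\}\vdash\beta$, with $\alpha$ as the cut-formula. Applying mixed-cut gives $\Gamma'\cup\Sigma\vdash\beta$, and since $\Gamma'\subseteq\Sigma$ this is just $\Sigma\vdash\beta$, contradicting $\Sigma\nvdash\beta$. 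Hence $\alpha\in\Sigma$, and $\Sigma$ is strongly closed.

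There is no genuine obstacle here; the argument is a single well-chosen instantiation of mixed-cut. The one point worth flagging is precisely why the \emph{mixed} version is needed rather than ordinary cut. The strong-closedness condition allows the cut-formula $\alpha$ to be derived from a \emph{subset} $\Gamma'$ of $\Sigma$, whereas ordinary cut would only let us cut a formula that $\Sigma$ itself derives; with cut alone one could handle only the special case $\Gamma'=\Sigma$, which recovers mere closedness. Thus the extra generality of mixed-cut (decoupling the side-context $\Gamma'$ of the first premise from the side-context $\Sigma$ of the second) is exactly the feature that upgrades the conclusion of Theorem \ref{thm:cut=>closed_sat} from closed to strongly closed.
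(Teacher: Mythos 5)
Your proposal is correct and follows essentially the same route as the paper: both first invoke the fact that mixed-cut implies cut together with Theorem \ref{thm:cut=>closed_sat} to get closedness, and then dispose of condition (ii) of Definition \ref{def:str_closed} by the same contradiction, cutting the premise $\Gamma'\vdash\alpha$ against $\Sigma\cup\{\alpha\}\vdash\beta$ (obtained from $\beta$-saturation) via mixed-cut to derive $\Sigma\vdash\beta$. The only cosmetic difference is that the paper uses closedness to reduce condition (ii) to proper subsets $\Gamma'\subsetneq\Sigma$, whereas you treat all $\Gamma'\subseteq\Sigma$ uniformly, which subsumes that case.
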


\begin{proof}
Let $\Sigma\subseteq\mathscr{L}$ be a saturated set. Now, since $(\mathscr{L},\vdash)$ satisfies mixed-cut, it also satisfies cut. Then by Theorem \ref{thm:cut=>closed_sat}, $\Sigma$ is closed. Thus, for any $\alpha\in\mathscr{L}$, $\Sigma\vdash\alpha$ iff $\alpha\in\Sigma$. Hence to show that $\Sigma$ is strongly closed, it suffices to show that, for any $\alpha\in\mathscr{L}$, if $\Gamma\subsetneq\Sigma$ with $\Gamma\vdash\alpha$, then $\alpha\in\Sigma$.

Suppose the contrary, i.e. there exists $\alpha\in\mathscr{L}$ such that $\Gamma\subsetneq\Sigma$ and $\Gamma\vdash\alpha$ but $\alpha\notin \Sigma$. Now, $\Sigma$ is $\beta$-saturated for some $\beta\in\mathscr{L}$. Thus $\Sigma\nvdash\beta$ but since $\alpha\notin\Sigma$, $\Sigma\cup\{\alpha\}\vdash\beta$. So, we have $\Gamma\vdash\alpha$ and $\Sigma\cup\{\alpha\}\vdash\beta$, and hence by mixed-cut, $\Gamma\cup\Sigma\vdash\beta$. This implies that $\Sigma\vdash\beta$ since $\Gamma\subsetneq\Sigma$. This is a contradiction. Hence $\Sigma$ must be strongly closed.
\end{proof}

\subsection{Maximal nontrivial sets}\label{subsec:MaxNontrivSets}

\begin{definition}\label{def:trivial}
Let $(\mathscr{L},\vdash)$ be a logical structure and $\Gamma\subseteq\mathscr{L}$. $\Gamma$ is called \emph{trivial} if $C_{\vdash}(\Gamma)=\mathscr{L}$, and \emph{nontrivial} otherwise. Thus, $\Gamma\subseteq\mathscr{L}$ is nontrivial if there exists an $\alpha\in\mathscr{L}$ such that $\Gamma\nvdash\alpha$.
\end{definition}

\begin{remark}
A trivial set is also called an \emph{absolutely inconsistent} set.
\end{remark}

\begin{definition}\label{def:maxntriv}
Let $(\mathscr{L},\vdash)$ be a logical structure and $\Gamma\subseteq\mathscr{L}$. $\Gamma$ is said to be \emph{maximal nontrivial} in $(\mathscr{L},\vdash)$ if $\Gamma$ is nontrivial but every $\Sigma\subseteq\mathscr{L}$ with $\Gamma\subsetneq\Sigma$ is trivial.
\end{definition}

\begin{theorem}\label{thm:maxnontriv=>relmax}
Let $(\mathscr{L},\vdash)$ be a logical structure and $\Gamma\subseteq \mathscr{L}$. If $\Gamma$ is maximal nontrivial then it is relatively maximal in $\alpha$ for every $\alpha\notin C_\vdash(\Gamma)$.
\end{theorem}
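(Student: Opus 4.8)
The plan is to verify directly the two defining conditions for $\Gamma$ being relatively maximal in $\alpha$ (Definition \ref{def:relmax}) from the hypothesis that $\Gamma$ is maximal nontrivial, so essentially no machinery beyond unpacking Definitions \ref{def:trivial} and \ref{def:maxntriv} should be needed. I would fix an arbitrary $\alpha\notin C_\vdash(\Gamma)$; by the definition of the operator $C_\vdash$, this is precisely the assertion $\Gamma\nvdash\alpha$, which is already the first of the two conditions required for relative maximality in $\alpha$.

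For the second condition, I would take any $\Sigma\subseteq\mathscr{L}$ with $\Sigma\supsetneq\Gamma$ and argue that $\Sigma\vdash\alpha$. Since $\Gamma$ is maximal nontrivial and $\Sigma$ is a proper superset of $\Gamma$, Definition \ref{def:maxntriv} forces $\Sigma$ to be trivial, i.e.\ $C_\vdash(\Sigma)=\mathscr{L}$. In particular $\alpha\in C_\vdash(\Sigma)$, which is exactly $\Sigma\vdash\alpha$, as required. Combining the two observations, $\Gamma\nvdash\alpha$ while every proper extension $\Sigma\supsetneq\Gamma$ satisfies $\Sigma\vdash\alpha$, so $\Gamma$ is relatively maximal in $\alpha$; and since $\alpha$ was an arbitrary element outside $C_\vdash(\Gamma)$, this holds for every such $\alpha$.

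I do not anticipate any genuine obstacle: the result is a direct consequence of the fact that triviality of $\Sigma$ means $\Sigma$ derives everything, so in particular it derives the target $\alpha$. The only point worth a moment's care is to observe that no monotonicity, cut, or other structural hypothesis on $\vdash$ is invoked anywhere in the argument, so it goes through for an arbitrary logical structure, matching the full generality of the statement.
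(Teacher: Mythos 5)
Your proposal is correct and follows essentially the same argument as the paper: unpack the definitions, note that $\alpha\notin C_\vdash(\Gamma)$ means $\Gamma\nvdash\alpha$, and use maximal nontriviality to conclude that every proper extension $\Sigma\supsetneq\Gamma$ is trivial and hence satisfies $\Sigma\vdash\alpha$. The paper's only addition is a parenthetical remark that such an $\alpha$ exists because $\Gamma$ is nontrivial, which does not affect the substance of the proof.
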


\begin{proof}
Suppose $\Gamma\subseteq\mathscr{L}$ is maximal nontrivial. Let $\alpha\notin C_\vdash(\Gamma)$ (such an $\alpha$ exists since $\Gamma$ is nontrivial, and hence $C_\vdash(\Gamma)\neq\mathscr{L}$). Then $\Gamma\nvdash\alpha$. Let $\Sigma\subseteq\mathscr{L}$ such that $\Gamma\subsetneq\Sigma$. Then as $\Gamma$ is maximal nontrivial, $\Sigma$ must be trivial. So, in particular, $\Sigma\vdash\alpha$. Hence $\Gamma$ is relatively maximal in $\alpha$ for every $\alpha\notin C_\vdash(\Gamma)$.
\end{proof}

\begin{corollary}\label{cor:maxnontriv=>sat}
Let $(\mathscr{L},\vdash)$ be a logical structure and $\Gamma\subseteq \mathscr{L}$. If $\Gamma$ is maximal nontrivial, then it is $\alpha$-saturated for every $\alpha\notin C_\vdash(\Gamma)$, and hence saturated. 
\end{corollary}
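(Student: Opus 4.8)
The plan is to chain together the two results immediately preceding this corollary, so the work is almost entirely bookkeeping. First I would invoke Theorem~\ref{thm:maxnontriv=>relmax}: since $\Gamma$ is maximal nontrivial, it is relatively maximal in $\alpha$ for every $\alpha\notin C_\vdash(\Gamma)$. Next I would apply Remark~\ref{rem:alpha-sat}(i), which says that relative maximality in $\alpha$ already forces $\alpha$-saturation (this direction needs no monotonicity assumption). Composing these two facts yields directly that $\Gamma$ is $\alpha$-saturated for every $\alpha\notin C_\vdash(\Gamma)$, which is the main assertion.

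For the concluding ``and hence saturated'' clause, I would use nontriviality of $\Gamma$ in an essential way. By Definition~\ref{def:trivial}, a nontrivial $\Gamma$ satisfies $C_\vdash(\Gamma)\neq\mathscr{L}$, so the set $\mathscr{L}\setminus C_\vdash(\Gamma)$ is nonempty. Choosing any $\alpha$ from it, the first part of the argument shows $\Gamma$ is $\alpha$-saturated, and by Definition~\ref{def:alpha-sat} this is exactly what it means for $\Gamma$ to be saturated.

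I do not anticipate any genuine obstacle: the statement is a formal consequence of Theorem~\ref{thm:maxnontriv=>relmax} and Remark~\ref{rem:alpha-sat}(i). The only point deserving a moment's attention is making sure the final ``saturated'' conclusion is not vacuous, i.e.\ that a maximal nontrivial set really does admit some $\alpha$ lying outside its closure. This is precisely where the nontriviality hypothesis is consumed, and it is worth stating explicitly rather than leaving implicit.
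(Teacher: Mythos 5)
Your proof is correct and follows exactly the route the paper takes: its proof is the one-line ``Straightforward from the above theorem and Remark~\ref{rem:alpha-sat},'' i.e.\ chaining Theorem~\ref{thm:maxnontriv=>relmax} with Remark~\ref{rem:alpha-sat}(i). Your explicit note that nontriviality guarantees some $\alpha\notin C_\vdash(\Gamma)$ exists (so ``saturated'' is not vacuous) is a point the paper handles inside the proof of Theorem~\ref{thm:maxnontriv=>relmax} itself, so there is no substantive difference.
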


\begin{proof}
Straightforward from the above theorem and Remark \ref{rem:alpha-sat}.
\end{proof}

\begin{remark}\label{rem:sat=>nontriv}
Suppose $(\mathscr{L},\vdash)$ is a logical structure and $\Gamma\subseteq\mathscr{L}$ is saturated. Then there exists an $\alpha\in\mathscr{L}$ such that $\Gamma$ is $\alpha$-saturated. This implies that $\Gamma\nvdash\alpha$. Hence $\Gamma$ is nontrivial. Thus every saturated ($\alpha$-saturated) set is nontrivial.
\end{remark}

The converse of Theorem \ref{thm:maxnontriv=>relmax} also holds for monotone logical structures, as shown below.

\begin{theorem}
Let $(\mathscr{L},\vdash)$ be a monotone logical structure and $\Gamma\subseteq \mathscr{L}$. If $\Gamma$ relatively maximal in $\alpha$ for every $\alpha\notin C_\vdash(\Gamma)$ then $\Gamma$ is maximal nontrivial.
\end{theorem}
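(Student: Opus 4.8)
The plan is to verify the two clauses of Definition~\ref{def:maxntriv} separately: that $\Gamma$ is nontrivial, and that every set properly containing $\Gamma$ is trivial.

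Nontriviality is quick. For the hypothesis to be more than vacuous we need at least one $\alpha\notin C_\vdash(\Gamma)$; fixing such an $\alpha$, relative maximality in $\alpha$ gives in particular $\Gamma\nvdash\alpha$, so $C_\vdash(\Gamma)\neq\mathscr{L}$ and $\Gamma$ is nontrivial by Definition~\ref{def:trivial}. This is also the one place where one must assume $\Gamma$ is nontrivial, or equivalently that some $\alpha\notin C_\vdash(\Gamma)$ exists, since otherwise the hypothesis holds vacuously for a trivial $\Gamma$ and the conclusion would fail.

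The core of the proof is to show that any $\Sigma$ with $\Gamma\subsetneq\Sigma$ is trivial, i.e.\ that $\Sigma\vdash\beta$ for every $\beta\in\mathscr{L}$. I would fix an arbitrary $\beta$ and split on whether $\beta\in C_\vdash(\Gamma)$. If $\beta\in C_\vdash(\Gamma)$, then $\Gamma\vdash\beta$, and since $\Gamma\subseteq\Sigma$, monotonicity immediately delivers $\Sigma\vdash\beta$. If $\beta\notin C_\vdash(\Gamma)$, then by hypothesis $\Gamma$ is relatively maximal in $\beta$, so by Definition~\ref{def:relmax} every $\Sigma'$ with $\Sigma'\supsetneq\Gamma$ satisfies $\Sigma'\vdash\beta$; as $\Sigma\supsetneq\Gamma$, this gives $\Sigma\vdash\beta$. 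Since $\beta$ was arbitrary, $C_\vdash(\Sigma)=\mathscr{L}$, so $\Sigma$ is trivial, and hence $\Gamma$ is maximal nontrivial.

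There is no serious obstacle here; the one point worth isolating is the case split. Relative maximality only governs the elements lying outside $C_\vdash(\Gamma)$, so on its own it cannot push the elements of $C_\vdash(\Gamma)$ into $C_\vdash(\Sigma)$ --- precisely the gap filled by monotonicity. This also explains why monotonicity is needed in this direction, whereas the converse, Theorem~\ref{thm:maxnontriv=>relmax}, holds for arbitrary logical structures.
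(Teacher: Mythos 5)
Your proof is correct and uses the same two ingredients as the paper's own argument---monotonicity and the relative-maximality hypothesis---merely organized as a direct case split on whether $\beta\in C_\vdash(\Gamma)$, where the paper instead argues by contradiction (assuming $\Sigma\nvdash\beta$ for some $\beta$, using monotonicity contrapositively to get $\Gamma\nvdash\beta$, and then invoking relative maximality in $\beta$). Your explicit handling of the nontriviality clause of Definition \ref{def:maxntriv}, including the caveat that the statement requires some $\alpha\notin C_\vdash(\Gamma)$ to exist lest the hypothesis hold vacuously for a trivial $\Gamma$, is a point the paper's proof passes over silently, so your write-up is if anything slightly more complete.
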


\begin{proof}
Suppose $\Gamma\subseteq\mathscr{L}$ is relatively maximal in $\alpha$ for every $\alpha\notin C_\vdash(\Gamma)$ and $\Sigma\supsetneq\Gamma$. If possible, let $\Sigma$ be nontrivial, i.e. there exists $\beta\in\mathscr{L}$ such that $\Sigma\nvdash\beta$. This implies that $\Gamma\nvdash\beta$ as well, since $(\mathscr{L},\vdash)$ is monotone and $\Gamma\subsetneq \Sigma$. So, $\beta\notin C_\vdash(\Gamma)$, and hence by our assumption, $\Gamma$ is relatively maximal in $\beta$. Then it must be that $\Sigma\vdash\beta$. This is a contradiction. So, $\Sigma$ cannot be nontrivial, and thus $\Gamma$ is maximal nontrivial.
\end{proof}

We now digress a bit and show that a certain generalization of \emph{implication-saturated} sets, introduced by Batens in \cite{Batens1980}, constitutes an example of maximal nontrivial sets, which in turn are saturated sets as well, by Corollary \ref{cor:maxnontriv=>sat}. In \cite{Batens1980}, the implication-saturated sets have been used to develop an alternative method for proving the completeness results for a certain class of logics. It has been argued there that since this method avoids the use of traditional maximal consistent sets, it can be applied to a wide class of non-classical logics, especially the paraconsistent logics described in \cite{Arruda1980, daCosta1974, daCosta_Alves1977}.

\begin{definition}\label{def:imp-sat}
Let $(\mathscr{L},\vdash)$ be a logic with a binary connective, $\limp$, and $\Gamma\subseteq\lvert\mathscr{L}\rvert$. $\Gamma$ is called \emph{$\limp$-saturated} if for every $\alpha\notin\Gamma$, $\xi_\alpha:=\{\alpha\limp\beta:\beta\in \mathscr{L}\}\subseteq\Gamma$.
\end{definition}

\begin{remark} The above definition, although motivated by the notion of implication-saturated sets, is a slight generalization of the latter since $\limp$ here could be any binary connective.

It may also be noted here that the discussion in \cite{Batens1980} is based on logics and not logical structures. However, as mentioned in Remark \ref{rem:logicVlog_str}, every logic has an underlying logical structure. Hence a connection can be drawn regardless of this difference.

The concept of \emph{deductively closed} sets used in \cite{Batens1980} is the same as that of our closed sets described in Definition \ref{def:closed}.

We differ in the definition of trivial sets as well. While in \cite{Batens1980}, a set $\Gamma$ in a logic $(\mathscr{L},\vdash)$ is called trivial if $\Gamma=\mathscr{L}$ ($\Gamma=\lvert\mathscr{L}\rvert$ in our notation), we call a set $\Gamma$ in a logical structure $(\mathscr{L},\vdash)$ trivial if $C_{\vdash}(\Gamma)=\mathscr{L}$. However, this difference is inconsequential if we are dealing with closed sets. 
\end{remark}

Suppose $(\mathscr{L},\vdash)$ is a logic with $\limp$ such that, for every $\Gamma\cup\{\alpha,\beta\}\subseteq\lvert\mathscr{L}\rvert$, $\Gamma\vdash\alpha$ and $\Gamma\vdash\alpha\limp\beta$ imply $\Gamma\vdash\beta$. Then we say that modus ponens holds in $(\mathscr{L},\vdash)$.

\begin{theorem}\label{thm:imp-sat=>maxntriv}
Suppose $(\mathscr{L},\vdash)$ is a reflexive logic with $\limp$ such that modus ponens holds in $(\mathscr{L},\vdash)$. Then every nontrivial $\limp$-saturated $\Gamma\subseteq\lvert\mathscr{L}\rvert$ is maximal nontrivial and closed.
\end{theorem}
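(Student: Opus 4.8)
The plan is to exploit a single mechanism that both conclusions rest on: under reflexivity and modus ponens, $\limp$-saturation makes every element outside $\Gamma$ explosive. Concretely, if $\alpha\notin\Gamma$ then $\xi_\alpha=\{\alpha\limp\beta:\beta\in\mathscr{L}\}\subseteq\Gamma$, so any set that contains both $\alpha$ and $\Gamma$ derives everything. I would isolate this observation and then apply it twice, once to obtain maximal nontriviality and once to obtain closedness.

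First, to show $\Gamma$ is maximal nontrivial, I would take an arbitrary $\Sigma$ with $\Gamma\subsetneq\Sigma$ and fix some $\alpha\in\Sigma\setminus\Gamma$. Since $\alpha\notin\Gamma$, $\limp$-saturation gives $\xi_\alpha\subseteq\Gamma\subseteq\Sigma$. By reflexivity, $\Sigma\vdash\alpha$, and $\Sigma\vdash\alpha\limp\beta$ for every $\beta\in\mathscr{L}$ (as each $\alpha\limp\beta$ lies in $\Sigma$); modus ponens then yields $\Sigma\vdash\beta$ for every $\beta$, so $C_\vdash(\Sigma)=\mathscr{L}$, i.e.\ $\Sigma$ is trivial. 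Combined with the hypothesis that $\Gamma$ itself is nontrivial, this is exactly the definition of $\Gamma$ being maximal nontrivial.

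Second, to show $\Gamma$ is closed, by reflexivity $\Gamma\subseteq C_\vdash(\Gamma)$, so it suffices to establish $C_\vdash(\Gamma)\subseteq\Gamma$. I would argue by contradiction: suppose $\Gamma\vdash\beta$ yet $\beta\notin\Gamma$. Then $\limp$-saturation applied to $\beta$ gives $\xi_\beta=\{\beta\limp\gamma:\gamma\in\mathscr{L}\}\subseteq\Gamma$, whence $\Gamma\vdash\beta\limp\gamma$ for each $\gamma$ by reflexivity; feeding $\Gamma\vdash\beta$ and these into modus ponens yields $\Gamma\vdash\gamma$ for all $\gamma$, so $\Gamma$ is trivial, contradicting the nontriviality hypothesis. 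Hence any $\beta$ with $\Gamma\vdash\beta$ already belongs to $\Gamma$, giving $C_\vdash(\Gamma)=\Gamma$.

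The argument is essentially routine once the explosion step is extracted, and I do not expect a genuine obstacle. The one point demanding care is that the closedness half truly consumes the nontriviality hypothesis: nontriviality is precisely what forbids $\Gamma$ from proving anything outside itself, so I would invoke it explicitly rather than treat closedness as a formal consequence of $\limp$-saturation alone. I would also note that monotonicity is never assumed, so every deduction must be routed through reflexivity and modus ponens on the relevant set directly, as done above.
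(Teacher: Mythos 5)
Your proof is correct and follows essentially the same route as the paper's: both conclusions are derived from the single explosion mechanism whereby $\xi_\alpha\subseteq\Gamma$ for $\alpha\notin\Gamma$, combined with reflexivity and modus ponens, makes any proper extension (respectively, $\Gamma$ itself under the assumption $\Gamma\vdash\beta$, $\beta\notin\Gamma$) trivial. The only cosmetic differences are that you argue the maximality half directly rather than by contradiction, and that you explicitly note the inclusion $\Gamma\subseteq C_\vdash(\Gamma)$ from reflexivity, which the paper leaves implicit.
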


\begin{proof}
Let $\Gamma\subseteq\lvert\mathscr{L}\rvert$ be nontrivial and $\limp$-saturated. Suppose $\Gamma$ is not maximal nontrivial. Then there exists $\Sigma\supsetneq \Gamma$ such that $\Sigma$ is nontrivial. Since $\Gamma\subsetneq \Sigma$, there exists $\gamma\in \Sigma\setminus \Gamma$. Then $\xi_\gamma\subseteq \Gamma$, as $\Gamma$ is $\limp$-saturated, and hence $\xi_\gamma\subseteq\Sigma$. Thus $\gamma\in\Sigma$ and $\gamma\limp\alpha\in \Sigma$ for every $\alpha\in \lvert\mathscr{L}\rvert$. Therefore by reflexivity of $(\mathscr{L},
\vdash)$, we have $\Sigma\vdash\gamma$ and $\Sigma\vdash\gamma\limp\alpha$ for every $\alpha\in\lvert\mathscr{L}\rvert$. Then by modus ponens, we get $\Sigma\vdash\alpha$ for every $\alpha\in\lvert\mathscr{L}\rvert$, i.e. $\Sigma$ is trivial, a contradiction. Since $\Gamma$ is nontrivial, this implies that $\Gamma$ is maximal nontrivial.

Next, to show that $\Gamma$ is closed, suppose the contrary. Then there exists a $\beta\in\lvert\mathscr{L}\rvert$ such that $\Gamma\vdash\beta$ but $\beta\notin \Gamma$. Since $\Gamma$ is implication saturated, this implies that $\xi_\beta\subseteq \Gamma$, i.e. $\beta\limp\delta\in \Gamma$ for all $\delta\in\lvert\mathscr{L}\rvert$. So, by reflexivity, $\Gamma\vdash\beta\limp\delta$ for all $\delta\in\lvert\mathscr{L}\rvert$. Hence by modus ponens, $\Gamma\vdash\delta$ for all $\delta\in\lvert\mathscr{L}\rvert$. In other words, $\Gamma$ is trivial, a contradiction. Thus $\Gamma$ must be closed. 
\end{proof}

\begin{theorem}\label{thm:imp-sat}
Let $(\mathscr{L},\vdash)$ be a reflexive logic with $\limp$. Suppose that for all $\Sigma\cup\{\beta\}\subseteq\lvert\mathscr{L}\rvert$ with $\beta\notin\Sigma$, $C_{\vdash}(\Sigma\cup\xi_\beta)\subseteq C_{\vdash}(\Sigma)$. Then any closed $\Gamma\subseteq\lvert\mathscr{L}\rvert$ is $\limp$-saturated.
\end{theorem}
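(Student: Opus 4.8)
The plan is to prove the statement directly by unwinding the definitions and chaining a short sequence of inclusions. Fix a closed set $\Gamma\subseteq\lvert\mathscr{L}\rvert$ and an arbitrary element $\alpha\notin\Gamma$; by Definition~\ref{def:imp-sat}, it suffices to show that $\xi_\alpha\subseteq\Gamma$.

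The first step is to observe that the hypothesis is tailored to be applied at exactly this configuration. Since $\alpha\notin\Gamma$, we may instantiate the assumed condition with $\Sigma=\Gamma$ and $\beta=\alpha$, which yields
\[
C_{\vdash}(\Gamma\cup\xi_\alpha)\subseteq C_{\vdash}(\Gamma).
\]
Next I would invoke closedness of $\Gamma$, i.e.\ $C_{\vdash}(\Gamma)=\Gamma$ (Definition~\ref{def:closed}), to upgrade this to $C_{\vdash}(\Gamma\cup\xi_\alpha)\subseteq\Gamma$.

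The final step uses reflexivity of $(\mathscr{L},\vdash)$. Because $\xi_\alpha\subseteq\Gamma\cup\xi_\alpha$, reflexivity gives $\xi_\alpha\subseteq\Gamma\cup\xi_\alpha\subseteq C_{\vdash}(\Gamma\cup\xi_\alpha)$; concretely, for each $\beta\in\mathscr{L}$ the element $\alpha\limp\beta$ lies in $\Gamma\cup\xi_\alpha$, so $\Gamma\cup\xi_\alpha\vdash\alpha\limp\beta$. Combining this with the inclusion from the previous step forces $\xi_\alpha\subseteq\Gamma$. Since $\alpha\notin\Gamma$ was arbitrary, $\Gamma$ is $\limp$-saturated. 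I do not expect any genuine obstacle here: the argument is a clean three-link chain, and the only real insight is recognizing that the hypothesis should be applied with $\Sigma$ taken to be $\Gamma$ itself and $\beta$ taken to be the chosen witness $\alpha\notin\Gamma$, after which reflexivity and closedness do the rest.
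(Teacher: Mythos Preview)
Your proof is correct and follows essentially the same approach as the paper: both establish the chain $\xi_\alpha\subseteq\Gamma\cup\xi_\alpha\subseteq C_\vdash(\Gamma\cup\xi_\alpha)\subseteq C_\vdash(\Gamma)=\Gamma$ by applying reflexivity, the hypothesis with $\Sigma=\Gamma$ and $\beta=\alpha$, and closedness of $\Gamma$. The only difference is the order of presentation, which is immaterial.
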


\begin{proof}
Suppose $\Gamma\cup\{\alpha\}\subseteq\lvert\mathscr{L}\rvert$ such that $\Gamma$ is closed and $\alpha\notin\Gamma$. Now, 
\[
\begin{array}{lcll}
\xi_\alpha&\subseteq&\Gamma\cup\xi_\alpha\\
&\subseteq&C_{\vdash}(\Gamma\cup\xi_\alpha)&\hbox{(by reflexivity)}\\
&\subseteq&C_{\vdash}(\Gamma)&\hbox{(by the given condition)}\\
&=&\Gamma&\hbox{(since $\Gamma$ is closed)}
\end{array}
\]
Thus $\Gamma$ is $\limp$-saturated.
\end{proof}

\begin{remark}
The proviso that for all $\Sigma\cup\{\beta\}\subseteq\lvert\mathscr{L}\rvert$ with $\beta\notin\Sigma$, $C_{\vdash}(\Sigma\cup\xi_\beta)\subseteq C_{\vdash}(\Sigma)$, in Theorem \ref{thm:imp-sat} is what \cite[Lemma 4]{Batens1980} states.
\end{remark}

We can now conclude that in a logic that satisfies the conditions of Theorems \ref{thm:imp-sat=>maxntriv} and \ref{thm:imp-sat}, a nontrivial set is $\limp$-saturated iff it is closed and maximal nontrivial in the corresponding underlying logical structure.

\begin{remark}
It is noteworthy that while the notions of saturated and maximal nontrivial sets do not depend on the existence of any connective like $\limp$, and hence make sense for any logical structure, $\limp$-saturated sets can only exist in logics with such a logical operator. The above results thus provide a more general alternative understanding of the discussions in \cite{Batens1980}.
\end{remark}

\section{Lindenbaum-type Logical Structures}\label{sec:Lind}

In \cite[Section 4]{Beziau1999}, four versions of Lindenbaum's law were listed. We now list the definitions of the logical structures induced by them, albeit with some terminological modifications.

\begin{definition}{\label{def:Lind-type}}
A logical structure $(\mathscr{L},\vdash)$ is said to be of:

\begin{enumerate}[label=(\alph*)]
    \item \emph{Lindenbaum-I-type} if for all nontrivial $\Gamma\subseteq \mathscr{L}$, there exists a saturated $\Sigma\supseteq \Gamma$.\label{def:LindI} 
     \item \emph{Lindenbaum-II-type} if for all $\Gamma\cup\{\alpha\}\subseteq \mathscr{L}$, with $\Gamma\nvdash\alpha$ there exists an $\alpha$-saturated $\Sigma\supseteq \Gamma$.\label{def:LindII}
     \item \emph{Lindenbaum-III-type} if for all nontrivial $\Gamma\subseteq \mathscr{L}$, there exists a maximal nontrivial $\Sigma\supseteq \Gamma$.\label{def:LindIII}
     \item \emph{Lindenbaum-IV-type} if for all $\Gamma\cup\{\alpha\}\subseteq \mathscr{L}$, with $\Gamma\nvdash\alpha$ there exists a $\Sigma\supseteq \Gamma$ which is relatively maximal in $\alpha$.\label{def:LindIV}
\end{enumerate}
\end{definition}

\begin{remark}
There are important terminological differences between the above definitions and the translations from French of those in \cite{Beziau1999}. These are summarized below.

\begin{center}
\begin{tabular}{|l|l|}
\hline
\textbf{In \cite{Beziau1999}}&\textbf{In this paper}\\
\hline
$(\mathscr{L},\vdash)$ is a logic&$(\mathscr{L},\vdash)$ is a logical structure\\ 
\hline
$\Gamma\subseteq\mathscr{L}$ is a limited theory&$\Gamma$ is nontrivial\\
\hline
$\Gamma$ is $\alpha$-limited& $\Gamma\nvdash\alpha$\\ 
\hline
$\Gamma$ is excessive&$\Gamma$ is saturated\\ 
\hline
$\Gamma$ is $\alpha$-excessive&$\Gamma$ is $\alpha$-saturated\\ \hline
$\Gamma$ is maximal&$\Gamma$ is maximal nontrivial\\ 
\hline
$\Gamma$ is $\alpha$-maximal&$\Gamma$ is relatively maximal in $\alpha$\\ 
\hline
Weak Lindenbaum-Asser logic&Lindenbaum-I-type logical structure\\ \hline
Lindenbaum-Asser logic&Lindenbaum-II-type logical structure\\ 
\hline
Lindenbaum logic&Lindenbaum-III-type logical structure\\ 
\hline
Strong Lindenbaum logic&Lindenbaum-IV-type logical structure\\ 
\hline
\end{tabular}
\end{center}
\end{remark}
 
\subsection{Lindenbaum-II-type logical structures}\label{subsec:LindII}

\begin{theorem}\label{thm:LindII}
Let $(\mathscr{L},\vdash)$ be a Lindenbaum-II-type logical structure and $\alpha\in \mathscr{L}$. Then every maximal $\alpha$-saturated set is relatively maximal in $\alpha$.
\end{theorem}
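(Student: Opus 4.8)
The plan is to unwind what it means for $\Gamma$ to be maximal $\alpha$-saturated---namely that $\Gamma$ is itself $\alpha$-saturated and that no $\alpha$-saturated set strictly contains it---and then verify directly the two clauses of relative maximality in $\alpha$ from Definition \ref{def:relmax}. The first clause, $\Gamma\nvdash\alpha$, is immediate from $\alpha$-saturation, so all the content lies in showing that every $\Sigma\supsetneq\Gamma$ satisfies $\Sigma\vdash\alpha$.

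My first instinct would be a direct argument: given $\Sigma\supsetneq\Gamma$, choose $\beta\in\Sigma\setminus\Gamma$; since $\Gamma$ is $\alpha$-saturated we have $\Gamma\cup\{\beta\}\vdash\alpha$, and since $\Gamma\cup\{\beta\}\subseteq\Sigma$ one would like to deduce $\Sigma\vdash\alpha$. The obstacle---and precisely the reason the Lindenbaum-II hypothesis is needed---is that this last inference is an appeal to monotonicity, which is not available in an arbitrary logical structure. So the naive route does not close, and the hypothesis must be brought in another way.

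The fix is to argue by contradiction and let the Lindenbaum-II property do the work. Fix $\Sigma\supsetneq\Gamma$ and suppose toward a contradiction that $\Sigma\nvdash\alpha$. Since $(\mathscr{L},\vdash)$ is of Lindenbaum-II-type, applying Definition \ref{def:Lind-type}\ref{def:LindII} to the pair $\Sigma,\alpha$ produces an $\alpha$-saturated set $\Sigma'\supseteq\Sigma$. Then $\Sigma'\supseteq\Sigma\supsetneq\Gamma$, so $\Sigma'$ is an $\alpha$-saturated set strictly containing $\Gamma$, contradicting the maximality of $\Gamma$ among $\alpha$-saturated sets. Hence $\Sigma\vdash\alpha$, which is exactly the second clause of Definition \ref{def:relmax}. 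I expect the only point requiring care to be the bookkeeping of the inclusions---combining $\Sigma'\supseteq\Sigma$ with $\Sigma\supsetneq\Gamma$ to obtain the strict inclusion $\Sigma'\supsetneq\Gamma$ that actually breaks maximality---while the remainder is a routine application of the definitions.
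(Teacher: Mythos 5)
Your proof is correct and follows essentially the same route as the paper's: assume some $\Sigma\supsetneq\Gamma$ with $\Sigma\nvdash\alpha$, apply the Lindenbaum-II property to $\Sigma$ to obtain an $\alpha$-saturated $\Sigma'\supseteq\Sigma\supsetneq\Gamma$, and contradict the maximality of $\Gamma$ among $\alpha$-saturated sets. Your explicit remark that the inclusion $\Sigma'\supsetneq\Gamma$ must be strict to break maximality is a small point of care that the paper glosses over, but the argument is the same.
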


\begin{proof}
Let $\Gamma\cup\{\alpha\}\subseteq\mathscr{L}$ such that $\Gamma$ is maximal $\alpha$-saturated. If possible, suppose $\Gamma$ is not relatively maximal in $\alpha$. Then there exists $\Sigma\supsetneq\Gamma$ such that $\Sigma\nvdash\alpha$. Since $(\mathscr{L},\vdash)$ is of Lindenbaum-II-type, this implies that there must exist an $\alpha$-saturated $\Delta\supseteq \Sigma$ by Definition \ref{def:Lind-type}\ref{def:LindII}. Thus $\Delta$ is an $\alpha$-saturated set containing $\Gamma$. This contradicts the maximality of $\Gamma$. Hence $\Gamma$ must be relatively maximal in $\alpha$.
\end{proof}

\begin{remark} 
The following result (translated from French and rephrased using our terminology) is quoted from \cite[Section 5]{Beziau1999}.

\begin{quote}
``In a Lindenbaum-II-type logical structure, every set coincides with the intersection of its saturated extensions.''
\end{quote}

We point out here that the above result is not true in general. This is witnessed in the following example. Let $(\mathscr{L}_\mathbf{CL},\vdash_{\mathbf{CL}})$ be as before. We define a new logical structure $(\mathscr{L},\vdash)$, where $\mathscr{L}=\lvert\mathscr{L}_\mathbf{CL}\rvert$, and for all $\Gamma\cup\{\alpha\}\subseteq\mathscr{L}$, $\Gamma\vdash\alpha$ iff $\Gamma\nvdash_\mathbf{CL}\alpha$. It then immediately follows that $\mathscr{L}\nvdash\alpha$ for all $\alpha\in\mathscr{L}$, and hence $\mathscr{L}$ is trivially $\alpha$-saturated in $(\mathscr{L},\vdash)$ for every $\alpha\in\mathscr{L}$. So, for every $\Gamma\cup\{\alpha\}$ with $\Gamma\nvdash\alpha$, there exists an $\alpha$-saturated $\Sigma\supseteq\Gamma$. Thus $(\mathscr{L},\vdash)$ is a Lindenbaum-II-type logical structure.

Now, suppose $\Gamma\subseteq\mathscr{L}$ is $\alpha$-saturated for some $\alpha\in\mathscr{L}$. Then $\Gamma\nvdash\alpha$ which implies that $\Gamma\vdash_\mathbf{CL}\alpha$. This in turn implies that $\Gamma\cup\{\delta\}\vdash_\mathbf{CL}\alpha$, and hence $\Gamma\cup\{\delta\}\nvdash\alpha$, for any $\delta\in\mathscr{L}\setminus\Gamma$. This is impossible unless $\Gamma=\mathscr{L}$. Thus $\mathscr{L}$ is the only saturated set in $(\mathscr{L},\vdash)$. Hence in this case, for any $\Gamma\subseteq\mathscr{L}$, the intersection of its saturated extensions is $\mathscr{L}$, and so if $\Gamma\subsetneq\mathscr{L}$, it cannot be equal to the intersection of its saturated extensions.
\end{remark}

However, the following weaker versions of the above result hold.

\begin{theorem}
Let $(\mathscr{L},\vdash)$ be a Lindenbaum-II-type logical structure satisfying cut. If $\Gamma\subsetneq\mathscr{L}$ is closed, then it coincides with the intersection of its saturated extensions.
\end{theorem}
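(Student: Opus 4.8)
The plan is to establish the two inclusions $\Gamma\subseteq\bigcap\mathcal{S}$ and $\bigcap\mathcal{S}\subseteq\Gamma$ separately, where $\mathcal{S}$ denotes the collection of all saturated extensions of $\Gamma$. The first inclusion is immediate once we know $\mathcal{S}\neq\emptyset$, since every member of $\mathcal{S}$ contains $\Gamma$. To confirm that $\mathcal{S}$ is nonempty, I would pick any $\beta\in\mathscr{L}\setminus\Gamma$ (which exists as $\Gamma\subsetneq\mathscr{L}$); since $\Gamma$ is closed, $\beta\notin\Gamma=C_\vdash(\Gamma)$ forces $\Gamma\nvdash\beta$, and then the Lindenbaum-II-type property (Definition \ref{def:Lind-type}\ref{def:LindII}) furnishes a $\beta$-saturated $\Sigma\supseteq\Gamma$, which is in particular saturated.

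For the reverse inclusion I would argue pointwise: given $\beta\in\mathscr{L}\setminus\Gamma$, I would exhibit a single saturated extension of $\Gamma$ that omits $\beta$, thereby showing $\beta\notin\bigcap\mathcal{S}$. As above, closedness of $\Gamma$ yields $\Gamma\nvdash\beta$, so Lindenbaum-II provides a $\beta$-saturated $\Sigma\supseteq\Gamma$, and by Definition \ref{def:alpha-sat} this gives $\Sigma\nvdash\beta$. The crucial step is to upgrade $\Sigma\nvdash\beta$ to $\beta\notin\Sigma$: here I invoke the cut hypothesis through Theorem \ref{thm:cut=>closed_sat}, which guarantees that every saturated set is closed, so $\Sigma$ is closed. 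Closedness then converts $\Sigma\nvdash\beta$ into $\beta\notin\Sigma$, completing the argument and yielding $\bigcap\mathcal{S}\subseteq\Gamma$.

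The step I expect to carry the actual content — and the only place the cut hypothesis is used — is precisely this last conversion. In the absence of reflexivity, a $\beta$-saturated set $\Sigma$ satisfies $\Sigma\nvdash\beta$ but could a priori still contain $\beta$, in which case it would fail to separate $\beta$ from $\Gamma$ and the intersection could collapse (as indeed happens in the counterexample preceding this theorem). Theorem \ref{thm:cut=>closed_sat} is exactly what rules this out, which is why cut is indispensable; everything else is routine bookkeeping with the two inclusions.
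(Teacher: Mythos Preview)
Your proposal is correct and follows essentially the same approach as the paper's proof: for $\beta\notin\Gamma$, closedness gives $\Gamma\nvdash\beta$, Lindenbaum-II yields a $\beta$-saturated extension $\Sigma$, and Theorem~\ref{thm:cut=>closed_sat} (cut implies saturated sets are closed) converts $\Sigma\nvdash\beta$ into $\beta\notin\Sigma$. The only cosmetic difference is that you explicitly verify $\mathcal{S}\neq\emptyset$ before asserting $\Gamma\subseteq\bigcap\mathcal{S}$, whereas the paper simply calls the inclusion immediate.
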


\begin{proof}
Let $\Gamma\cup\{\alpha\}\subseteq\mathscr{L}$ such that $\Gamma\subsetneq\mathscr{L}$ is closed and $\alpha\notin\Gamma$. Since $\Gamma$ is closed, we have $\Gamma\nvdash\alpha$. Then, as $(\mathscr{L},\vdash)$ is of Lindenbaum-II-type, by Definition \ref{def:Lind-type}\ref{def:LindII}, there exists an $\alpha$-saturated $\Delta\supseteq \Gamma$. This implies that $\Delta\nvdash\alpha$, i.e. $\alpha\notin C_{\vdash}(\Delta)$. Now, as $\Delta$ is saturated and $(\mathscr{L},\vdash)$ satisfies cut, by Theorem~\ref{thm:cut=>closed_sat}, $\Delta$ is closed. Hence $C_{\vdash}(\Delta)=\Delta$ and so $\alpha\notin\Delta$. Let $\mathbb{SAT}$ denote the collection of all saturated sets in $(\mathscr{L},\vdash)$. Then this implies that $\alpha\notin\displaystyle\bigcap_{\substack{\Gamma\subseteq\Sigma\\\Sigma\in \mathbb{SAT}}}\Sigma$. Since $\alpha$ was chosen arbitrarily, we have $\Gamma\supseteq\displaystyle\bigcap_{\substack{\Gamma\subseteq\Sigma\\\Sigma\in \mathbb{SAT}}}\Sigma$. The reverse inclusion is immediate. Hence $\Gamma=\displaystyle\bigcap_{\substack{\Gamma\subseteq\Sigma\\\Sigma\in \mathbb{SAT}}}\Sigma$.
\end{proof}

\begin{theorem}
Let $(\mathscr{L},\vdash)$ be a reflexive Lindenbaum-II-type logical structure. If $\Gamma\subsetneq\mathscr{L}$ is closed, then $\Gamma=\displaystyle\bigcap_{\substack{\Gamma\subseteq\Sigma\\\Sigma\in \mathbb{SAT}}}C_\vdash(\Sigma)$, where $\mathbb{SAT}$ denotes, as before, the set of saturated subsets of $\mathscr{L}$.
\end{theorem}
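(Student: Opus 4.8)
The plan is to establish the two inclusions separately, closely mirroring the proof of the preceding theorem, but now exploiting the fact that we intersect the closures $C_\vdash(\Sigma)$ rather than the saturated sets $\Sigma$ themselves; this is precisely what will let us drop the cut hypothesis in favour of mere reflexivity. Throughout, write $\mathcal{F}=\{\Sigma\in\mathbb{SAT}:\Gamma\subseteq\Sigma\}$ for the indexing family of the intersection.

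For the inclusion $\Gamma\subseteq\bigcap_{\Sigma\in\mathcal{F}}C_\vdash(\Sigma)$, I would simply invoke reflexivity. For any $\Sigma\in\mathcal{F}$ we have $\Gamma\subseteq\Sigma$, and reflexivity gives $\Sigma\subseteq C_\vdash(\Sigma)$, whence $\Gamma\subseteq C_\vdash(\Sigma)$. Since this holds for every member of $\mathcal{F}$, the set $\Gamma$ is contained in the intersection. One should also record that $\mathcal{F}$ is nonempty: as $\Gamma\subsetneq\mathscr{L}$ is closed, there is some $\alpha\notin\Gamma$ with $\Gamma\nvdash\alpha$, and the Lindenbaum-II-type hypothesis then supplies an $\alpha$-saturated extension of $\Gamma$. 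This matters so that the intersection is genuinely bounded and is not the empty-family convention $\mathscr{L}$.

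For the reverse inclusion $\bigcap_{\Sigma\in\mathcal{F}}C_\vdash(\Sigma)\subseteq\Gamma$, I would argue at the level of elements. Fix $\alpha\in\mathscr{L}\setminus\Gamma$. Since $\Gamma$ is closed, $C_\vdash(\Gamma)=\Gamma$, so $\alpha\notin C_\vdash(\Gamma)$, i.e.\ $\Gamma\nvdash\alpha$. By Definition \ref{def:Lind-type}\ref{def:LindII} there is an $\alpha$-saturated $\Sigma\supseteq\Gamma$, and this $\Sigma$ belongs to $\mathcal{F}$. By the definition of $\alpha$-saturation, $\Sigma\nvdash\alpha$, which is exactly $\alpha\notin C_\vdash(\Sigma)$. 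Hence $\alpha$ fails to lie in at least one term of the intersection, so $\alpha\notin\bigcap_{\Sigma\in\mathcal{F}}C_\vdash(\Sigma)$. As $\alpha$ was an arbitrary element outside $\Gamma$, the intersection is contained in $\Gamma$.

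Combining the two inclusions yields the claimed equality. There is no genuine obstacle here; the only point deserving emphasis is exactly the reason this is a legitimate weakening of the cut-version. The crucial step ``$\Sigma\nvdash\alpha$ implies $\alpha\notin C_\vdash(\Sigma)$'' is immediate from the definition of $C_\vdash$ and requires neither closedness of $\Sigma$ nor cut. It is precisely the replacement of $\Sigma$ by $C_\vdash(\Sigma)$ inside the intersection that renders the extra hypotheses of the previous theorem dispensable, leaving reflexivity as the only structural assumption needed.
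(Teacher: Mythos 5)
Your proposal is correct and follows essentially the same route as the paper's own proof: both directions are established exactly as the paper does — the reverse inclusion by taking $\alpha\notin\Gamma$, using closedness to get $\Gamma\nvdash\alpha$, and invoking the Lindenbaum-II property to produce an $\alpha$-saturated extension $\Sigma\supseteq\Gamma$ with $\alpha\notin C_\vdash(\Sigma)$; the forward inclusion by reflexivity via $\Gamma\subseteq\Sigma\subseteq C_\vdash(\Sigma)$. Your explicit remark that the family of saturated extensions is nonempty is a harmless addition (the paper leaves this implicit, though its reverse-inclusion argument already exhibits such an extension).
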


\begin{proof}
Let $\Gamma\cup\{\alpha\}\subseteq\mathscr{L}$ such that $\Gamma\subsetneq\mathscr{L}$ is closed and $\alpha\notin\Gamma$. Since $\Gamma$ is closed, we have $\Gamma\nvdash\alpha$. Then, as $(\mathscr{L},\vdash)$ is of Lindenbaum-II-type, by Definition \ref{def:Lind-type}\ref{def:LindII}, there exists an $\alpha$-saturated $\Delta\supseteq \Gamma$. This implies that $\Delta\nvdash\alpha$, i.e. $\alpha\notin C_{\vdash}(\Delta)$. So, $\alpha\notin \displaystyle\bigcap_{\substack{\Gamma\subseteq\Sigma\\\Sigma\in \mathbb{SAT}}}C_\vdash(\Sigma)$. Since $\alpha$ was chosen arbitrarily, we have $\Gamma\supseteq \displaystyle\bigcap_{\substack{\Gamma\subseteq\Sigma\\\Sigma\in \mathbb{SAT}}}C_\vdash(\Sigma)$. Now, as $(\mathscr{L},\vdash)$ is reflexive, $\Sigma\subseteq C_\vdash(\Sigma)$ for all $\Sigma\subseteq\mathscr{L}$. Thus 
\[
\Gamma\subseteq\bigcap_{\Gamma\subseteq\Sigma}\Sigma\subseteq\bigcap_{\Gamma\subseteq\Sigma}C_\vdash(\Sigma)\subseteq\bigcap_{\substack{\Gamma\subseteq\Sigma\\\Sigma\in \mathbb{SAT}}}C_\vdash(\Sigma).
\]
Hence $\Gamma=\displaystyle\bigcap_{\substack{\Gamma\subseteq\Sigma\\\Sigma\in \mathbb{SAT}}}C_\vdash(\Sigma)$.
\end{proof}

\begin{theorem}{\label{thm:cutT2Lind=>R}}
Let $(\mathscr{L},\vdash)$ be a Lindenbaum-II-type logical structure satisfying cut. Then it is reflexive.
\end{theorem}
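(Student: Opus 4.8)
The plan is to argue by contradiction, assuming that $\vdash$ fails to be reflexive and deriving a contradiction from the interplay between the Lindenbaum-II property and Theorem \ref{thm:cut=>closed_sat}. So suppose $(\mathscr{L},\vdash)$ is not reflexive. Then there exist $\Gamma\subseteq\mathscr{L}$ and $\beta\in\Gamma$ such that $\Gamma\nvdash\beta$. This failure of reflexivity is exactly the hypothesis needed to invoke the Lindenbaum-II property.

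The key steps, in order, are as follows. First, since $\Gamma\nvdash\beta$ and $(\mathscr{L},\vdash)$ is of Lindenbaum-II-type, Definition \ref{def:Lind-type}\ref{def:LindII} yields a $\beta$-saturated set $\Sigma\supseteq\Gamma$. By definition of $\beta$-saturation, $\Sigma\nvdash\beta$. Second, because $\Sigma$ is saturated and $(\mathscr{L},\vdash)$ satisfies cut, Theorem \ref{thm:cut=>closed_sat} tells us that $\Sigma$ is closed, i.e.\ $C_\vdash(\Sigma)=\Sigma$. Third, I observe that $\beta\in\Gamma\subseteq\Sigma$, so $\beta$ is an element of the closed set $\Sigma$; by closedness (Definition \ref{def:closed}), membership gives derivability, whence $\Sigma\vdash\beta$. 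This directly contradicts $\Sigma\nvdash\beta$ established in the first step. Therefore no such $\Gamma$ and $\beta$ can exist, and $\vdash$ is reflexive.

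The crucial observation — and the only point requiring any care — is that the $\beta$-saturated extension $\Sigma$ must itself \emph{contain} $\beta$, since it extends $\Gamma\ni\beta$; this is what makes the $\beta$-saturation of $\Sigma$ clash with its closedness. Everything else is a direct application of the already-established machinery: Lindenbaum-II supplies the extension, and Theorem \ref{thm:cut=>closed_sat} (via cut) supplies its closedness. I do not expect any genuine obstacle here, as the argument is a short composition of two prior results; the main task is simply to line up the definitions of $\alpha$-saturation and closedness so that the contradiction is transparent.
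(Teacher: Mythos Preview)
Your proof is correct and follows essentially the same approach as the paper's own proof: assume a failure of reflexivity, extend to an $\alpha$-saturated (your $\beta$-saturated) superset via Lindenbaum-II, invoke Theorem~\ref{thm:cut=>closed_sat} to get closedness, and derive a contradiction from the fact that the element in question lies in the closed set. The only cosmetic difference is that the paper uses the contrapositive direction of closedness ($\Sigma\nvdash\alpha\Rightarrow\alpha\notin\Sigma$) while you use the forward direction ($\beta\in\Sigma\Rightarrow\Sigma\vdash\beta$); these are equivalent.
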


\begin{proof}
Let $\Gamma\cup\{\alpha\}\subseteq \mathscr{L}$ such that $\alpha\in\Gamma$. If possible, suppose $\Gamma\nvdash\alpha$. Then, as $(\mathscr{L},\vdash)$ is of Lindenbaum-II-type, by Definition \ref{def:Lind-type}\ref{def:LindII}, there exists an $\alpha$-saturated $\Sigma\supseteq\Gamma$. So, $\Sigma\nvdash\alpha$. Now, since $\Sigma$ is saturated and $(\mathscr{L},\vdash)$ satisfies cut, by Theorem~\ref{thm:cut=>closed_sat}, $\Sigma$ is closed. Thus $\alpha\notin \Sigma$. This implies that $\alpha\notin \Gamma$. This is a contradiction. Hence $\Gamma\vdash\alpha$. Thus $(\mathscr{L},\vdash)$ is reflexive.
\end{proof}

\begin{theorem}{\label{thm:mcutT2Lind=>TL}}
Let $(\mathscr{L},\vdash)$ be a Lindenbaum-II-type logical structure satisfying mixed-cut. Then it is of Tarski-type.
\end{theorem}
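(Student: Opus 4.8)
The plan is to reduce the claim to the characterization of Tarski-type logical structures already established in Theorem~\ref{thm:Char_Tar}. That theorem tells us that $(\mathscr{L},\vdash)$ is of Tarski-type precisely when its statement~(3) holds, namely that for every $\Gamma\cup\{\alpha\}\subseteq\mathscr{L}$ with $\Gamma\nvdash\alpha$ there is a strongly closed $\Sigma\supseteq\Gamma$ with $\Sigma\nvdash\alpha$. So rather than verify reflexivity, monotonicity, and transitivity directly, I would simply manufacture, for each failing pair $(\Gamma,\alpha)$, a strongly closed extension witnessing (3).

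First I would fix an arbitrary $\Gamma\cup\{\alpha\}\subseteq\mathscr{L}$ with $\Gamma\nvdash\alpha$. Since $(\mathscr{L},\vdash)$ is of Lindenbaum-II-type, Definition~\ref{def:Lind-type}\ref{def:LindII} supplies an $\alpha$-saturated set $\Sigma\supseteq\Gamma$. By the definition of $\alpha$-saturation (Definition~\ref{def:alpha-sat}), this $\Sigma$ already satisfies $\Sigma\nvdash\alpha$, so the only remaining task is to upgrade ``$\Sigma$ is saturated'' to ``$\Sigma$ is strongly closed.'' This is exactly where the mixed-cut hypothesis enters: $\Sigma$ is saturated (being $\alpha$-saturated), and since the structure satisfies mixed-cut, Theorem~\ref{thm:mcut=>str-closed_sat} guarantees that $\Sigma$ is strongly closed.

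At this point $\Sigma$ is a strongly closed extension of $\Gamma$ with $\Sigma\nvdash\alpha$, which is precisely statement~(3) of Theorem~\ref{thm:Char_Tar}. Invoking the equivalence (3)$\iff$(1) of that theorem yields that $(\mathscr{L},\vdash)$ is of Tarski-type, completing the argument. I do not expect any genuine obstacle here: all the substantive work has been pushed into the two prior results, Theorem~\ref{thm:mcut=>str-closed_sat} (mixed-cut forces saturated sets to be strongly closed) and Theorem~\ref{thm:Char_Tar} (strongly closed separating extensions characterize Tarski-type). The only point requiring care is the bookkeeping of matching hypotheses — checking that an $\alpha$-saturated set is in particular saturated, so that Theorem~\ref{thm:mcut=>str-closed_sat} applies, and that the conclusion $\Sigma\nvdash\alpha$ is delivered for free by $\alpha$-saturation rather than needing a separate argument.
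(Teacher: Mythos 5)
Your proposal is correct and follows exactly the same route as the paper's proof: use the Lindenbaum-II property to obtain an $\alpha$-saturated extension $\Sigma\supseteq\Gamma$ (which automatically satisfies $\Sigma\nvdash\alpha$), apply Theorem~\ref{thm:mcut=>str-closed_sat} under mixed-cut to conclude $\Sigma$ is strongly closed, and then invoke the equivalence of statements (1) and (3) in Theorem~\ref{thm:Char_Tar}. Nothing is missing, and your attention to the bookkeeping (that $\alpha$-saturated implies saturated, and that $\Sigma\nvdash\alpha$ comes for free) matches the paper's argument step for step.
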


\begin{proof}
Let $\Gamma\cup\{\alpha\}\subseteq \mathscr{L}$ such that $\Gamma\nvdash\alpha$. Since $(\mathscr{L},\vdash)$ is of Lindenbaum-II-type, by Definition \ref{def:Lind-type}\ref{def:LindII}, there exists an $\alpha$-saturated $\Sigma\supseteq\Gamma$. So, $\Sigma\nvdash\alpha$. Now, as $\Sigma$ is saturated and $(\mathscr{L},\vdash)$ satisfies mixed-cut, by Theorem~\ref{thm:mcut=>str-closed_sat}, $\Sigma$ is strongly closed. Thus $\Sigma$ is a strongly closed extension of $\Gamma$ such that $\Sigma\nvdash\alpha$. Thus by Theorem~\ref{thm:Char_Tar}, $(\mathscr{L},\vdash)$ is of Tarski-type.
\end{proof}

\subsection{Lindenbaum-IV-type logical structures}\label{subsec:LindIV}

We now prove the following characterization theorem for Lindenbaum-IV-type logical structures.

\begin{theorem}[\textsc{Characterization of Lindenbaum-IV}]\label{thm:Char_T4Lind}
Let $(\mathscr{L},\vdash)$ be a logical structure. Then the following statements are equivalent.
\begin{enumerate}[label=(\arabic*)]
\item $(\mathscr{L},\vdash)$ is of Lindenbaum-IV-type.
\item For all $\Gamma\cup\{\alpha\}\subseteq\mathscr{L}$, if $\Gamma\nvdash\alpha$ then there exists a maximal $\alpha$-saturated set $\Sigma\supseteq\Gamma$.
\item For all $\Gamma\cup\{\alpha\}\subseteq\mathscr{L}$, the set $\liml(\Gamma,\alpha):=\{\Sigma:\Gamma\subseteq \Sigma \hbox{ and }\Sigma\nvdash\alpha\}$ (i.e.\ the set of all extensions $\Sigma\supseteq\Gamma$ such that $\Sigma\nvdash\alpha$) has a maximal element whenever it is nonempty.
\end{enumerate}
\end{theorem}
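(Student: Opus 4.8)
The plan is to run the equivalences through the single structural observation that a set $\Sigma$ is a maximal element of $\liml(\Gamma,\alpha)$ (under $\subseteq$) exactly when $\Sigma\supseteq\Gamma$, $\Sigma\nvdash\alpha$, and every $\Sigma'\supsetneq\Sigma$ satisfies $\Sigma'\vdash\alpha$; since any such $\Sigma'$ automatically contains $\Gamma$, this is precisely the assertion that $\Sigma$ is relatively maximal in $\alpha$ and extends $\Gamma$ (Definition \ref{def:relmax}). With this dictionary, (1)$\iff$(3) is immediate. For (3)$\implies$(1), note that if $\Gamma\nvdash\alpha$ then $\Gamma\in\liml(\Gamma,\alpha)$, so the family is nonempty and its maximal element is a relatively-maximal-in-$\alpha$ extension of $\Gamma$. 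For (1)$\implies$(3), if $\liml(\Gamma,\alpha)$ is nonempty I would pick any $\Sigma_0$ in it, apply (1) to the pair $(\Sigma_0,\alpha)$ to obtain a set $\Sigma\supseteq\Sigma_0\supseteq\Gamma$ relatively maximal in $\alpha$, and check via the dictionary that $\Sigma$ is then maximal in $\liml(\Gamma,\alpha)$.

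Next I would dispatch (1)$\implies$(2) with the one-line remark that a set relatively maximal in $\alpha$ is automatically maximal $\alpha$-saturated: it is $\alpha$-saturated by Remark \ref{rem:alpha-sat}(i), and were some $\alpha$-saturated $\Sigma'\supsetneq\Sigma$ to exist, it would satisfy $\Sigma'\nvdash\alpha$, contradicting relative maximality in $\alpha$. Hence, given $\Gamma\nvdash\alpha$, the relatively-maximal-in-$\alpha$ extension supplied by (1) is itself the desired maximal $\alpha$-saturated extension.

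The hard part will be (2)$\implies$(1), because the converse of the previous remark genuinely fails for non-monotone logical structures: a maximal $\alpha$-saturated set need not be relatively maximal in $\alpha$, since there could be a strict superset $\Delta\nvdash\alpha$ that simply fails to be $\alpha$-saturated, and the maximality of $\Sigma$ among $\alpha$-saturated sets says nothing about such $\Delta$. I would circumvent this exactly as in the proof of Theorem \ref{thm:LindII}, by exploiting that (2) is a universal statement and invoking it twice. Given $\Gamma\nvdash\alpha$, use (2) to obtain a maximal $\alpha$-saturated $\Sigma_0\supseteq\Gamma$, and claim $\Sigma_0$ is relatively maximal in $\alpha$: if not, there is $\Sigma_1\supsetneq\Sigma_0$ with $\Sigma_1\nvdash\alpha$, and applying (2) to $\Sigma_1$ produces a maximal $\alpha$-saturated $\Sigma_2\supseteq\Sigma_1\supsetneq\Sigma_0$; then $\Sigma_2$ is an $\alpha$-saturated set properly containing $\Sigma_0$, contradicting the maximality of $\Sigma_0$ among $\alpha$-saturated sets. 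Thus $\Sigma_0$ is relatively maximal in $\alpha$ and extends $\Gamma$, giving (1). The subtle point to flag is that this argument essentially uses the word \emph{maximal} in (2): the weaker existence of an $\alpha$-saturated (but not necessarily maximal) extension would not furnish the contradiction, so the extra strength of (2) over the bare Lindenbaum-II property is exactly what is consumed here. Chaining (1)$\implies$(2)$\implies$(1) together with (1)$\iff$(3) then closes all three equivalences.
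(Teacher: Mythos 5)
Your proof is correct, and it uses exactly the ingredients of the paper's proof: the easy (1)$\implies$(2) via Remark \ref{rem:alpha-sat}, the double invocation of (2) to overcome the failure of monotonicity, and the identification of the maximal elements of $\liml(\Gamma,\alpha)$ with the relatively-maximal-in-$\alpha$ extensions of $\Gamma$. The only difference is organizational---the paper proves the cycle (1)$\implies$(2)$\implies$(3)$\implies$(1), with your (2)$\implies$(1) argument and your ``dictionary'' both absorbed into its proof of (2)$\implies$(3)---so the two proofs are essentially the same.
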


\begin{proof}
We will use the following scheme to show that the above three statements are equivalent: (1)$\implies$(2); (2)$\implies$(3); (3)$\implies$(1).

\underline{(1)$\implies$(2)}: Suppose $\Gamma\cup\{\alpha\}\subseteq\mathscr{L}$ such that $\Gamma\nvdash\alpha$. Then, by Definition \ref{def:Lind-type}\ref{def:LindIV}, there exists $\Sigma\supseteq\Gamma$ which is relatively maximal in $\alpha$. So, $\Sigma$ is $\alpha$-saturated by Remark \ref{rem:alpha-sat}. Since $\Sigma$ is relatively maximal in $\alpha$, $\Delta\vdash\alpha$, and hence not $\alpha$-saturated, for every $\Delta\supsetneq\Sigma$. Thus $\Sigma$ is maximal $\alpha$-saturated.

\underline{(2)$\implies$(3)}: Suppose $\Gamma\cup\{\alpha\}\subseteq\mathscr{L}$ such that $\liml(\Gamma,\alpha)\ne\emptyset$. Let $\Sigma\in \liml(\Gamma,\alpha)$. Then, $\Sigma\nvdash\alpha$. So, by the assumed condition (2), there exists a maximal $\alpha$-saturated $\Delta\supseteq \Sigma$.  We claim that $\Delta$ is a maximal element of $\liml(\Gamma,\alpha)$.

Suppose the contrary. Then there exists $\Delta^\prime\in\liml(\Gamma,\alpha)$ such that $\Delta\subsetneq\Delta^\prime$. Now, since $\Delta^\prime\in\liml(\Gamma,\alpha)$, $\Delta^\prime\nvdash\alpha$. Again by the assumed condition (2), this implies that there exists a maximal $\alpha$-saturated $\Delta^{\prime\prime}\supseteq \Delta^\prime$. Thus $\Delta\subsetneq\Delta^{\prime\prime}$ and $\Delta^{\prime\prime}$ is $\alpha$-saturated. This contradicts the fact that $\Delta$ is maximal $\alpha$-saturated. Hence $\Delta$ must be a maximal element of $\liml(\Gamma,\alpha)$.

\underline{(3)$\implies$(1)}:
Let $\Gamma\cup\{\alpha\}\subseteq\mathscr{L}$ such that $\Gamma\nvdash\alpha$. Then $\Gamma\in\liml(\Gamma,\alpha)$, and hence $\liml(\Gamma,\alpha)\ne \emptyset$. So, by the assumed condition (3), $\liml(\Gamma,\alpha)$ has a maximal element, say $\Sigma$. Clearly, $\Gamma\subseteq\Sigma$. We claim that $\Sigma$ is also relatively maximal in $\alpha$. 

We note that $\Sigma\nvdash\alpha$ since $\Sigma\in\liml(\Gamma,\alpha)$. Now, suppose there exists $\Delta \supsetneq\Sigma$ such that $\Delta\nvdash\alpha$. Then, $\Delta\in\liml(\Gamma,\alpha)$. However, this contradicts the maximality of $\Sigma$ in $\liml(\Gamma,\alpha)$. Hence $\Delta\vdash\alpha$ for every $\Delta\supsetneq\Sigma$. Thus $\Sigma\supseteq\Gamma$ is relatively maximal in $\alpha$. Therefore, $(\mathscr{L},\vdash)$ is of Lindenbaum-IV-type.
\end{proof}

\begin{remark}
It follows from the above theorem that, if a logical structure $(\mathscr{L},\vdash)$ is of Lindenbaum-IV-type, then for all $\Gamma\cup\{\alpha\}\subseteq\mathscr{L}$ with $\Gamma\nvdash\alpha$, there exists an $\alpha$-saturated $\Sigma\supseteq\Gamma$. Now, the condition $\Gamma\nvdash\alpha$ implies that $\Gamma$ is nontrivial. As pointed out in Example \ref{exm:cpc_alpha-sat}, in case of classical propositional logic, every maximal consistent set $\Sigma$ is $\alpha$-saturated for all $\alpha\notin\Sigma$. Moreover, a set of classical propositional formulas is nontrivial iff it is consistent. So, in this special case, the statement (2) assumes the usual form of the Lindenbaum lemma for classical propositional logic, i.e.\ every consistent set of formulas can be extended to a maximal consistent set. Thus the above theorem provides a generalization of the usual Lindenbaum lemma for classical propositional logic.
\end{remark}

\begin{theorem}
Let $(\mathscr{L},\vdash)$ be a reflexive logical structure satisfying cut. Suppose further that for all $\Gamma\cup\{\alpha\}\subseteq\mathscr{L}$ with $\alpha\notin \Gamma$, if $C_\vdash(\Gamma\cup\{\alpha\})\subseteq C_\vdash(\Gamma)$ then $\Gamma$ is closed. Then $(\mathscr{L},\vdash)$ is of Lindenbaum-IV-type.
\end{theorem}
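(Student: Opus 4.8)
The plan is to invoke the Characterization of Lindenbaum-IV (Theorem \ref{thm:Char_T4Lind}): it suffices to verify statement (3), namely that for every $\Gamma \cup \{\alpha\} \subseteq \mathscr{L}$ the collection $\liml(\Gamma, \alpha) = \{\Sigma : \Gamma \subseteq \Sigma \text{ and } \Sigma \nvdash \alpha\}$, ordered by inclusion, has a maximal element whenever it is nonempty. I would establish this by Zorn's Lemma. So fix $\Gamma, \alpha$ with $\liml(\Gamma, \alpha) \neq \emptyset$ and let $\mathcal{C} \subseteq \liml(\Gamma, \alpha)$ be a chain; the candidate upper bound is $\Sigma := \bigcup \mathcal{C}$ (for the empty chain, any member of $\liml(\Gamma,\alpha)$ serves, and $\Gamma$ itself is one such since $\Gamma \nvdash \alpha$). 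Clearly $\Gamma \subseteq \Sigma$, so the whole content of the verification is that $\Sigma \in \liml(\Gamma, \alpha)$, i.e.\ that $\Sigma \nvdash \alpha$.

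The crux --- and the only place where the hypotheses do real work --- is showing $\Sigma \nvdash \alpha$ for this union, since without monotonicity or compactness one cannot transfer $\Delta \nvdash \alpha$ from the chain members $\Delta$ to their union for free. First, reflexivity gives $\alpha \notin \Sigma$: each $\Delta \in \mathcal{C}$ satisfies $\Delta \nvdash \alpha$, so by reflexivity $\alpha \notin \Delta$, whence $\alpha \notin \bigcup\mathcal{C} = \Sigma$. Now suppose for contradiction that $\Sigma \vdash \alpha$. Then $\alpha \in C_\vdash(\Sigma) \setminus \Sigma$, so $\Sigma$ is not closed. Applying the contrapositive of the third hypothesis to the pair $(\Sigma, \alpha)$ (legitimate since $\alpha \notin \Sigma$) yields $C_\vdash(\Sigma \cup \{\alpha\}) \not\subseteq C_\vdash(\Sigma)$, so there is a $\gamma$ with $\Sigma \cup \{\alpha\} \vdash \gamma$ but $\Sigma \nvdash \gamma$. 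Since $\Sigma \vdash \alpha$ and $\Sigma \cup \{\alpha\} \vdash \gamma$, cut (Definition \ref{def:cut,mcut}) gives $\Sigma \vdash \gamma$, contradicting $\Sigma \nvdash \gamma$. Hence $\Sigma \nvdash \alpha$, so $\Sigma \in \liml(\Gamma, \alpha)$ is the required upper bound. Zorn's Lemma then produces a maximal element of $\liml(\Gamma,\alpha)$, so statement (3) of Theorem \ref{thm:Char_T4Lind} holds and $(\mathscr{L},\vdash)$ is of Lindenbaum-IV-type.

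I expect the chain-union step to be the main obstacle conceptually, but the computation above dispatches it cleanly. It is worth recording that the very same combination of cut and the third hypothesis, applied to an arbitrary non-closed set rather than to a chain union, shows that no set can derive a non-member --- that is, reflexivity, cut, and the third hypothesis together force every subset of $\mathscr{L}$ to be closed. This gives an even shorter alternative route: once every set is closed, $\Gamma \nvdash \alpha$ forces $\alpha \notin \Gamma$, and then $\Sigma := \mathscr{L} \setminus \{\alpha\}$ is manifestly relatively maximal in $\alpha$ (it excludes $\alpha$, hence $\Sigma \nvdash \alpha$, while its only proper extension within $\mathscr{L}$, namely $\mathscr{L}$ itself, derives $\alpha$ by reflexivity), so $(\mathscr{L},\vdash)$ is of Lindenbaum-IV-type directly from Definition \ref{def:Lind-type}\ref{def:LindIV}.
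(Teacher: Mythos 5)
Your main argument is correct and is essentially the paper's own proof: both verify statement (3) of Theorem \ref{thm:Char_T4Lind} via Zorn's Lemma, and both dispatch the chain-union step with the same three ingredients. The only difference is bookkeeping --- the paper argues directly (cut gives $C_\vdash(\Delta\cup\{\alpha\})\subseteq C_\vdash(\Delta)$, the hypothesis then makes $\Delta$ closed, contradicting $\alpha\notin\Delta$), whereas you run the hypothesis in contrapositive form to extract a witness $\gamma$ and contradict via cut; these are the same computation read in opposite directions. One trivial slip: for the empty chain you assert ``$\Gamma$ itself is one such since $\Gamma\nvdash\alpha$'', but statement (3) only assumes $\liml(\Gamma,\alpha)\neq\emptyset$, not $\Gamma\nvdash\alpha$, and without monotonicity the latter does not follow from the former. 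This is harmless, since the other half of your parenthetical --- any member of the nonempty set $\liml(\Gamma,\alpha)$ bounds the empty chain --- is all that is needed.

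Your closing observation, however, is a genuinely different and sharper route, and it is correct. The same cut-plus-hypothesis computation, applied to an arbitrary $\Sigma\subseteq\mathscr{L}$, shows that $\Sigma\vdash\alpha$ with $\alpha\notin\Sigma$ is impossible; together with reflexivity this gives $C_\vdash(\Sigma)=\Sigma$ for every $\Sigma$, i.e., the hypotheses force $\vdash$ to coincide with the membership relation. From there the conclusion is immediate and choice-free: given $\Gamma\nvdash\alpha$, closedness of $\Gamma$ yields $\alpha\notin\Gamma$, and $\mathscr{L}\setminus\{\alpha\}$ is an extension of $\Gamma$ that is relatively maximal in $\alpha$, since it is closed and omits $\alpha$ (so does not derive $\alpha$), while its unique proper extension $\mathscr{L}$ derives $\alpha$ by reflexivity. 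This buys two things the paper's proof does not: it eliminates Zorn's Lemma entirely (the paper's route, like the Lindenbaum--Asser theorem it parallels, leans on choice), and it exposes that the theorem's hypotheses are far more restrictive than they appear --- on any carrier set $\mathscr{L}$ they admit exactly one structure, the one with $\Gamma\vdash\alpha$ iff $\alpha\in\Gamma$, so the theorem, while true, has no nontrivial instances.
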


\begin{proof}
Let $\Gamma\cup\{\alpha\}\subseteq \mathscr{L}$ be such that  $\liml(\Gamma,\alpha)$ is nonempty. Let $\mathcal{C}$ be any chain in $\liml(\Gamma,\alpha)$ and $\Delta:=\displaystyle\bigcup_{\Sigma\in \mathcal{C}}\Sigma$. Clearly, $\Gamma\subseteq\Delta$. We claim that $\Delta\nvdash\alpha$. 

Suppose the contrary, i.e. $\Delta\vdash\alpha$. We first note that for each $\Sigma\in\mathcal{C}\subseteq\liml(\Gamma,\alpha)$, $\Sigma\nvdash\alpha$. Since $(\mathscr{L},\vdash)$ is reflexive, this implies that $\alpha\notin\Sigma$ for all $\Sigma\in\mathcal{C}$. Thus $\alpha\notin\Delta$. Now, let $\beta\in C_\vdash(\Delta\cup\{\alpha\})$, i.e. $\Delta\cup\{\alpha\}\vdash\beta$. Then by cut, we have $\Delta\vdash\beta$. So, $\beta\in C_\vdash(\Delta)$. Thus $C_\vdash(\Delta\cup\{\alpha\})\subseteq C_\vdash(\Delta)$. Hence by our hypothesis, $\Delta$ must be closed. Now, since $\Delta\vdash\alpha$, this implies that $\alpha\in\Delta$, which contradicts our earlier conclusion that $\alpha\notin\Delta$. So, $\Delta\nvdash\alpha$.

This implies that $\Delta\in\liml(\Gamma,\alpha)$. Hence $\Delta$ is an upper bound for $\mathcal{C}$ in $\liml(\Gamma,\alpha)$. Since $\mathcal{C}$ was an arbitrary chain in $\liml(\Gamma,\alpha)$, we can conclude that every chain in $\liml(\Gamma,\alpha)$ has an upper bound. So, by Zorn's Lemma, it follows that $\liml(\Gamma,\alpha)$ has a maximal element. Hence by Theorem~\ref{thm:Char_T4Lind}, it follows that $(\mathscr{L},\vdash)$ is of Lindenbaum-IV-type.
\end{proof}

We conclude this subsection with the following connection between the classes of Lindenbaum-IV- and Lindenbaum-II-type logical structures.

\begin{theorem}{\label{thm:MonLind=>(T2Lind<=>T4Lind)}}
Let $(\mathscr{L},\vdash)$ be a monotone logical structure. Then $(\mathscr{L},\vdash)$ is of Lindenbaum-IV-type iff it is of Lindenbaum-II-type.
\end{theorem}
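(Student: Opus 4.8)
The plan is to observe that monotonicity collapses the two notions underlying the two definitions — being $\alpha$-saturated and being relatively maximal in $\alpha$ — into a single notion, so that the Lindenbaum-IV- and Lindenbaum-II-type conditions become literally the same requirement on a witnessing extension. The key input is Remark \ref{rem:alpha-sat}: part (i) asserts that, in any logical structure, a set relatively maximal in $\alpha$ is $\alpha$-saturated, while part (ii) asserts the converse under monotonicity. Hence, for a monotone $(\mathscr{L},\vdash)$, a set $\Sigma$ is $\alpha$-saturated if and only if it is relatively maximal in $\alpha$, and this single equivalence drives the entire argument.

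For the forward direction I would assume $(\mathscr{L},\vdash)$ is of Lindenbaum-IV-type and take any $\Gamma\cup\{\alpha\}\subseteq\mathscr{L}$ with $\Gamma\nvdash\alpha$. Definition \ref{def:Lind-type}\ref{def:LindIV} then supplies a $\Sigma\supseteq\Gamma$ that is relatively maximal in $\alpha$, and Remark \ref{rem:alpha-sat}(i) shows this same $\Sigma$ is $\alpha$-saturated, which is exactly what Definition \ref{def:Lind-type}\ref{def:LindII} demands. For the converse, assuming Lindenbaum-II-type and the same $\Gamma\nvdash\alpha$, Definition \ref{def:Lind-type}\ref{def:LindII} yields an $\alpha$-saturated $\Sigma\supseteq\Gamma$; now monotonicity together with Remark \ref{rem:alpha-sat}(ii) upgrades $\Sigma$ to being relatively maximal in $\alpha$, giving the Lindenbaum-IV condition.

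There is essentially no obstacle here: all the content is carried by Remark \ref{rem:alpha-sat}, and the sole role of the monotonicity hypothesis is to make part (ii) available, which is the only direction of the $\alpha$-saturated/relatively-maximal equivalence that is nontrivial. The one point worth stating explicitly is that the witnessing extension $\Sigma$ may be taken to be the very same set in both directions, so no separate construction (such as a Zorn's-lemma argument) is required — the equivalence of the two classes is immediate once the pointwise coincidence of the two notions is recorded.
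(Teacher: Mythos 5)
Your proof is correct and follows exactly the same route as the paper's: both rely on Remark \ref{rem:alpha-sat} to establish that, under monotonicity, a set is $\alpha$-saturated iff it is relatively maximal in $\alpha$, after which the equivalence of the two Lindenbaum-type conditions is immediate from Definition \ref{def:Lind-type}. Your write-up simply makes explicit the two directions that the paper compresses into a single sentence, including the useful observation that the same witnessing extension $\Sigma$ serves in both directions.
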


\begin{proof}
Suppose $(\mathscr{L},\vdash)$ is a monotone logical structure. Then by Remark \ref{rem:alpha-sat}, any $\Sigma\subseteq\mathscr{L}$ is $\alpha$-saturated iff it is relatively maximal in $\alpha$. The theorem now follows from the definitions of Lindenbaum-IV and Lindenbaum-II-type structures.
\end{proof}

\subsection{Lindenbaum-I- and Lindenbaum-III-type logical structures}\label{subsec:LindI/III}

Some theorems of the previous subsections hold for Lindenbaum-I- and/or Lindenbaum-III-type logical structures as well, of course with suitable modifications. We mention these below. One may compare the Theorems \ref{thm:LindI} and \ref{thm:Char_T3Lind} below with the Theorems \ref{thm:LindII} and \ref{thm:Char_T4Lind}, respectively, from the previous subsections.

\begin{theorem}\label{thm:LindI}
Let $(\mathscr{L},\vdash)$ be a Lindenbaum-I-type logical structure. Then every maximal saturated set is maximal nontrivial.
\end{theorem}

\begin{proof}
Let $\Gamma\subseteq\mathscr{L}$ be a maximal saturated set. Then by Remark \ref{rem:sat=>nontriv}, $\Gamma$ is nontrivial. We claim that $\Gamma$ is, in fact, maximal nontrivial. Suppose the contrary. Then there exists a nontrivial $\Sigma\supsetneq\Gamma$. Now, as $(\mathscr{L},\vdash)$ is of Lindenbaum-I-type, by Definition \ref{def:Lind-type}\ref{def:LindI}, there exists a saturated $\Delta\supseteq\Sigma$. This implies that $\Delta\supsetneq\Gamma$, and hence contradicts the assumption that $\Gamma$ is a maximal saturated set. Thus $\Gamma$ must be maximal nontrivial.
\end{proof}

\begin{theorem}[\textsc{Characterization of Lindenbaum-III}]\label{thm:Char_T3Lind}
Let $(\mathscr{L},\vdash)$ be a logical structure. Then the following statements are equivalent.
\begin{enumerate}[label=(\arabic*)]
\item $(\mathscr{L},\vdash)$ is of Lindenbaum-III-type.
\item For all nontrivial $\Gamma\subseteq\mathscr{L}$, there exists a maximal saturated set $\Sigma\supseteq\Gamma$.
\item For all $\Gamma\subseteq\mathscr{L}$ the set $\liml(\Gamma):=\{\Sigma:\Gamma\subseteq \Sigma \hbox{ and }\Sigma\hbox{ is nontrivial}\}$ (i.e.\ the set of all nontrivial extensions $\Sigma\supseteq\Gamma$) has a maximal element whenever it is nonempty.
\end{enumerate}
\end{theorem}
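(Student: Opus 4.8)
The plan is to prove the equivalence of the three statements by mimicking the structure of the proof of Theorem \ref{thm:Char_T4Lind}, using the cyclic scheme (1)$\implies$(2)$\implies$(3)$\implies$(1). The main conceptual parallel is that $\alpha$-saturated sets (in the Lindenbaum-IV setting) are here replaced by nontrivial sets, relatively maximal in $\alpha$ is replaced by maximal nontrivial, and the poset $\liml(\Gamma,\alpha)$ of $\alpha$-avoiding extensions is replaced by $\liml(\Gamma)$ of nontrivial extensions. So I would expect each step to mirror its Lindenbaum-IV counterpart almost verbatim, substituting these notions.

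First I would prove (1)$\implies$(2). Assume $\Gamma$ is nontrivial. By Definition \ref{def:Lind-type}\ref{def:LindIII}, there exists a maximal nontrivial $\Sigma\supseteq\Gamma$. By Corollary \ref{cor:maxnontriv=>sat}, $\Sigma$ is saturated. It remains to check that $\Sigma$ is in fact a \emph{maximal} saturated set: any $\Delta\supsetneq\Sigma$ is trivial (since $\Sigma$ is maximal nontrivial), and every saturated set is nontrivial by Remark \ref{rem:sat=>nontriv}, so such a $\Delta$ cannot be saturated. Hence $\Sigma$ is maximal saturated, giving (2).

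Next I would prove (2)$\implies$(3). Suppose $\liml(\Gamma)\ne\emptyset$, and pick $\Sigma\in\liml(\Gamma)$, so $\Sigma$ is nontrivial with $\Gamma\subseteq\Sigma$. By (2) there is a maximal saturated $\Delta\supseteq\Sigma$. I claim $\Delta$ is a maximal element of $\liml(\Gamma)$: it is nontrivial (being saturated) and contains $\Gamma$, so $\Delta\in\liml(\Gamma)$; and if some $\Delta'\in\liml(\Gamma)$ had $\Delta\subsetneq\Delta'$, then $\Delta'$ is nontrivial, so by (2) again there is a maximal saturated $\Delta''\supseteq\Delta'\supsetneq\Delta$, contradicting maximality of $\Delta$ among saturated sets. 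Finally, for (3)$\implies$(1): given nontrivial $\Gamma$, we have $\Gamma\in\liml(\Gamma)$, so $\liml(\Gamma)\ne\emptyset$ and by (3) it has a maximal element $\Sigma\supseteq\Gamma$, which is nontrivial; any $\Sigma'\supsetneq\Sigma$ cannot lie in $\liml(\Gamma)$ by maximality, hence must be trivial, so $\Sigma$ is maximal nontrivial, yielding (1).

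The only step that demands genuine care — rather than mechanical translation — is (2)$\implies$(3), where the notion of a \emph{maximal} saturated set must be leveraged exactly as the maximal $\alpha$-saturated set was in Theorem \ref{thm:Char_T4Lind}; the subtlety is the double application of (2) to derive a contradiction from a strictly larger element $\Delta'$. I would also be mildly cautious that (2) is genuinely weaker-looking than (1) yet equivalent, so in (2)$\implies$(3) the argument must not silently assume maximal nontriviality. Since all the supporting facts (Corollary \ref{cor:maxnontriv=>sat} and Remark \ref{rem:sat=>nontriv}) are already available, I do not anticipate a serious obstacle; the proof should be short and essentially parallel to that of Theorem \ref{thm:Char_T4Lind}.
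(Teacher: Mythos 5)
Your proposal is correct and follows essentially the same route as the paper: the same cyclic scheme (1)$\implies$(2)$\implies$(3)$\implies$(1), the same reliance on Corollary \ref{cor:maxnontriv=>sat} and Remark \ref{rem:sat=>nontriv}, and the same double application of (2) in the step (2)$\implies$(3). Your (1)$\implies$(2) is in fact marginally more direct than the paper's (you note that a saturated $\Delta\supsetneq\Sigma$ would be nontrivial, immediately contradicting maximal nontriviality of $\Sigma$, whereas the paper first extends $\Delta$ to a maximal nontrivial $\Delta'$), but this is an inessential variation.
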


\begin{proof}
We will use the following scheme to show that the above three statements are equivalent: (1)$\implies$(2); (2)$\implies$(3); (3)$\implies$(1).

\underline{(1)$\implies$(2)}: Suppose $\Gamma\subseteq\mathscr{L}$ is nontrivial. Then by Definition \ref{def:Lind-type}\ref{def:LindIII}, there exists a maximal nontrivial $\Sigma\supseteq\Gamma$. So, by Corollary \ref{cor:maxnontriv=>sat}, $\Sigma$ is saturated. We claim that $\Sigma$ is, in fact, maximal saturated. 

Suppose the contrary. Then there exists a saturated $\Delta\supsetneq\Sigma$. So, by Remark \ref{rem:sat=>nontriv}, $\Delta$ is nontrivial. Thus, again by Definition \ref{def:Lind-type}\ref{def:LindIII}, there exists a maximal nontrivial $\Delta^\prime\supseteq\Delta$. However, this implies that $\Delta^\prime\supsetneq\Sigma$. This contradicts the fact that $\Sigma$ is maximal nontrivial. Thus $\Sigma$ is maximal saturated.

\underline{(2)$\implies$(3)}: Suppose $\Gamma\subseteq\mathscr{L}$ such that $\liml(\Gamma)\neq\emptyset$. Let $\Sigma\in\liml(\Gamma)$. Then $\Gamma\subseteq\Sigma$ and $\Sigma$ is nontrivial. Hence by the assumed condition (2), there exists a maximal saturated $\Delta\supseteq\Sigma$. By Remark \ref{rem:sat=>nontriv}, $\Delta$ is nontrivial and since $\Gamma\subseteq\Sigma\subseteq\Delta$, $\Delta\in\liml(\Gamma)$. We claim that $\Delta$ is a maximal element of $\liml(\Gamma)$.

Suppose the contrary. Then there exists a $\Delta^\prime\supsetneq\Delta$ in $\liml(\Gamma)$. So, $\Delta^\prime$ is nontrivial, and hence again by the assumed condition (2), there exists a maximal saturated $\Delta^{\prime\prime}\supseteq\Delta^\prime$. This implies that $\Delta\subsetneq\Delta^{\prime\prime}$, which contradicts the assumption that $\Delta$ is maximal saturated. Thus $\Delta$ must be a maximal element of $\liml(\Gamma)$.

\underline{(3)$\implies$(1)}: Suppose $\Gamma\subseteq\mathscr{L}$ is nontrivial. Then $\Gamma\in\liml(\Gamma)$, and hence $\liml(\Gamma)\neq\emptyset$. Thus by the assumed condition (3), $\liml(\Gamma)$ must have a maximal element. Let $\Sigma\subseteq\mathscr{L}$ be such a maximal element of $\liml(\Gamma)$. Then $\Gamma\subseteq\Sigma$ and $\Sigma$ is nontrivial. So, $\Sigma$ is a maximal nontrivial set containing $\Gamma$. Hence $\mathscr{L}$ is of Lindenbaum-III-type.
\end{proof}

\begin{theorem}{\label{thm:mcutT1Lind=>pTL}}
Let $(\mathscr{L},\vdash)$ be a Lindenbaum-I-type logical structure satisfying mixed-cut. Then the set of trivial sets is upward closed.
\end{theorem}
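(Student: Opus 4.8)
The plan is to prove the statement by contradiction, exploiting the fact that under mixed-cut every saturated set is strongly closed (Theorem \ref{thm:mcut=>str-closed_sat}). Recall that the set of trivial sets being upward closed means precisely that whenever $\Gamma$ is trivial and $\Gamma\subseteq\Sigma$, the set $\Sigma$ is trivial as well. Since monotonicity is \emph{not} assumed here, this containment-based conclusion is not automatic, and I expect the crucial leverage to come from the Lindenbaum-I property together with strong closure.

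First I would fix a trivial $\Gamma$ and any $\Sigma\supseteq\Gamma$, and suppose toward a contradiction that $\Sigma$ is nontrivial. By the Lindenbaum-I-type hypothesis (Definition \ref{def:Lind-type}\ref{def:LindI}), there then exists a saturated $\Delta\supseteq\Sigma$; say $\Delta$ is $\alpha$-saturated for some $\alpha\in\mathscr{L}$, so in particular $\Delta\nvdash\alpha$. Because $(\mathscr{L},\vdash)$ satisfies mixed-cut, Theorem \ref{thm:mcut=>str-closed_sat} guarantees that $\Delta$ is strongly closed.

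Next I would invoke the triviality of $\Gamma$: since $C_\vdash(\Gamma)=\mathscr{L}$, we have $\Gamma\vdash\alpha$ for the witnessing $\alpha$. Now $\Gamma\subseteq\Sigma\subseteq\Delta$, so $\Gamma$ is a subset of the strongly closed set $\Delta$ with $\Gamma\vdash\alpha$; by condition (ii) of strong closure (Definition \ref{def:str_closed}) this forces $\alpha\in\Delta$. But a strongly closed set is closed (Remark \ref{rem:closed-str_closed}), so $\alpha\in\Delta$ yields $\Delta\vdash\alpha$, contradicting $\Delta\nvdash\alpha$. Hence $\Sigma$ cannot be nontrivial, i.e.\ $\Sigma$ is trivial, which establishes upward closure.

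The only real subtlety — and the step I would be most careful about — is the passage from ``$\Gamma\vdash\alpha$ with $\Gamma\subseteq\Delta$'' to ``$\alpha\in\Delta$.'' This is exactly where strong closure (rather than mere closure) is essential: closure alone would only relate $\Delta\vdash\alpha$ to $\alpha\in\Delta$, whereas I need the stronger property that anything derivable from a \emph{subset} of $\Delta$ already lies in $\Delta$. It is precisely to obtain this that mixed-cut is invoked through Theorem \ref{thm:mcut=>str-closed_sat}; plain cut (which by Theorem \ref{thm:cut=>closed_sat} yields only closure) would not suffice.
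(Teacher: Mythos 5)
Your proposal is correct and follows essentially the same route as the paper: assume $\Sigma$ nontrivial, use the Lindenbaum-I property to obtain a saturated extension $\Delta\supseteq\Sigma$, apply Theorem \ref{thm:mcut=>str-closed_sat} to conclude $\Delta$ is strongly closed, and then derive a contradiction from the trivial $\Gamma\subseteq\Delta$ via condition (ii) of strong closure. The only cosmetic difference is that you instantiate the contradiction at the single witness $\alpha$ of $\alpha$-saturation, whereas the paper concludes that $\Delta$ is outright trivial, contradicting Remark \ref{rem:sat=>nontriv}; these are the same argument.
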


\begin{proof}
Let $\Gamma\subseteq\Sigma\subseteq\mathscr{L}$ such that $\Gamma$ is trivial. If possible, suppose $\Sigma$ is nontrivial. Then, since $(\mathscr{L},\vdash)$ is of Lindenbaum-I-type, by Definition \ref{def:Lind-type}\ref{def:LindI}, there exists a saturated $\Delta\supseteq\Sigma$. So, by Remark \ref{rem:sat=>nontriv}, $\Delta$ is nontrivial. Now, as $(\mathscr{L},\vdash)$ satisfies mixed-cut, by Theorem~\ref{thm:mcut=>str-closed_sat}, $\Delta$ must be strongly closed. This, however, implies that $\Delta$ is trivial since $\Gamma$ is so and $\Gamma\subseteq\Delta$. This is a contradiction. Hence $\Sigma$ must be trivial. This proves that the set of trivial sets is upward closed.
\end{proof}

\subsection{Relationships between the Lindenbaum-types}\label{subsec:Lind-rel}

We end this section by discussing the relationships between the different Lindenbaum-types. We begin with the straightforward observations listed in the following theorem.

\begin{theorem}\label{thm:Lind}
Let $(\mathscr{L},\vdash)$ be a logical structure. 
\begin{enumerate}[label=(\arabic*)]
\item If $(\mathscr{L},\vdash)$ is of Lindenbaum-II-type then it is of Lindenbaum-I-type.
\item If $(\mathscr{L},\vdash)$ is of Lindenbaum-IV-type then it is of Lindenbaum-II-type, and hence also of Lindenbaum-I-type.
\item If $(\mathscr{L},\vdash)$ is of Lindenbaum-III-type then it is of Lindenbaum-I-type.
\end{enumerate}
\end{theorem}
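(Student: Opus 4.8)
The plan is to prove each of the three implications directly from the definitions of the respective Lindenbaum-types (Definition \ref{def:Lind-type}), leaning on the relationships between saturated, $\alpha$-saturated, relatively maximal, and maximal nontrivial sets established in Remark \ref{rem:alpha-sat} and Corollary \ref{cor:maxnontriv=>sat}. The overarching strategy is to take an arbitrary set satisfying the hypothesis of the weaker type and, starting from the extension guaranteed by the stronger type, show it already witnesses (or can be weakened to witness) the required property for the weaker type.

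For statement (1), I would let $\Gamma\subseteq\mathscr{L}$ be nontrivial. By Definition \ref{def:trivial}, nontriviality means there exists some $\alpha\in\mathscr{L}$ with $\Gamma\nvdash\alpha$. Since $(\mathscr{L},\vdash)$ is of Lindenbaum-II-type, Definition \ref{def:Lind-type}\ref{def:LindII} applied to this pair $\Gamma\cup\{\alpha\}$ yields an $\alpha$-saturated $\Sigma\supseteq\Gamma$. An $\alpha$-saturated set is saturated by Definition \ref{def:alpha-sat}, so $\Sigma$ is a saturated extension of $\Gamma$, establishing the Lindenbaum-I condition. For statement (2), I would observe that the Lindenbaum-IV hypothesis produces, for any $\Gamma\nvdash\alpha$, a $\Sigma\supseteq\Gamma$ relatively maximal in $\alpha$; by Remark \ref{rem:alpha-sat}(i), such a $\Sigma$ is $\alpha$-saturated, which is precisely the Lindenbaum-II conclusion. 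The second half of (2) then follows immediately by chaining with statement (1).

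For statement (3), I would again let $\Gamma\subseteq\mathscr{L}$ be nontrivial. The Lindenbaum-III hypothesis (Definition \ref{def:Lind-type}\ref{def:LindIII}) gives a maximal nontrivial $\Sigma\supseteq\Gamma$. By Corollary \ref{cor:maxnontriv=>sat}, every maximal nontrivial set is saturated, so $\Sigma$ is a saturated extension of $\Gamma$, yielding the Lindenbaum-I condition.

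I do not anticipate any serious obstacle: each implication is a one- or two-line deduction once the correct prior result is invoked, and the entire theorem is essentially bookkeeping over the hierarchy $\text{relatively maximal}\Rightarrow\alpha\text{-saturated}$ and $\text{maximal nontrivial}\Rightarrow\text{saturated}$. The only point requiring mild care is that the Lindenbaum-I and Lindenbaum-III conditions are stated for \emph{nontrivial} $\Gamma$, whereas Lindenbaum-II and Lindenbaum-IV are stated in terms of a witnessing $\alpha$ with $\Gamma\nvdash\alpha$; so in statement (1) I must explicitly extract such an $\alpha$ from nontriviality before applying the Lindenbaum-II hypothesis, and in statement (3) the passage is direct since both conditions quantify over nontrivial sets.
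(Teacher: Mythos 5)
Your proposal is correct and follows exactly the paper's own argument: statement (1) via extracting a witness $\alpha$ from nontriviality and using that $\alpha$-saturated sets are saturated, statement (2) via Remark \ref{rem:alpha-sat}(i) chained with statement (1), and statement (3) via Corollary \ref{cor:maxnontriv=>sat}. The only difference is that you spell out the one-line deductions the paper leaves implicit, which is fine.
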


\begin{proof}
Statement (1) follows from the facts that for every nontrivial set $\Gamma\subseteq\mathscr{L}$, there exists an $\alpha\in\mathscr{L}$ such that $\Gamma\nvdash\alpha$ and that every $\alpha$-saturated set is saturated.

Statement (2) follows from Remark \ref{rem:alpha-sat} and statement (1).

Statement (3) follows from Corollary \ref{cor:maxnontriv=>sat}.
\end{proof}

We devote the rest of this subsection to showing that none of the other possible inclusions between the classes of Lindenbaum-type logical structures holds.

\begin{example}[\textsc{Lindenbaum-I/II$\centernot\implies$Lindenbaum-III/IV}]\label{exm:LindI/II≠>LindIII/IV} 
Suppose $(\mathscr{L},\vdash)$ is a logical structure, where $\mathscr{L}$ is an infinite set and $\vdash\,\subseteq\,\mathcal{P}(\mathscr{L})\times\mathscr{L}$ be such that for any $\Gamma\subseteq\mathscr{L}$, 
\[
C_\vdash(\Gamma)=\left\{\begin{array}{ll}
\emptyset,&\hbox{if }\Gamma\hbox{ is finite and }\lvert\Gamma\rvert\hbox{ is odd},\\
\mathscr{L},&\hbox{otherwise}.\end{array}\right.
\]
We first show that $(\mathscr{L},\vdash)$ is of Lindenbaum-II-type. Let $\Gamma\cup\{\alpha\}\subseteq\mathscr{L}$ such that $\Gamma\nvdash\alpha$. Then $\Gamma$ must be nontrivial, and so $\Gamma$ must be finite with $\lvert\Gamma\rvert$ odd. So, for any $\beta\in\mathscr{L}\setminus\Gamma$ (such a $\beta$ exists since $\Gamma$ is finite while $\mathscr{L}$ is infinite), $\lvert\Gamma\cup\{\beta\}\rvert=\lvert\Gamma\rvert+1$ is even, and hence $\Gamma\cup\{\beta\}$ is trivial. So, in particular, $\Gamma\cup\{\beta\}\vdash\alpha$, for any $\beta\in\mathscr{L}\setminus\Gamma$. Hence $\Gamma$ is $\alpha$-saturated, i.e.\ $\Gamma$ is an $\alpha$-saturated set containing itself. Thus, $(\mathscr{L},\vdash)$ is of Lindenbaum-II-type, and hence also of Lindenbaum-I-type by Theorem \ref{thm:Lind}.

Next, to show that $(\mathscr{L},\vdash)$ is not of Lindenbaum-III-type, let $\Gamma\subseteq\mathscr{L}$ be nontrivial. If possible, let there be a maximal nontrivial $\Sigma\supseteq\Gamma$. Since $\Sigma$ is nontrivial, it must be finite with $\lvert\Sigma\rvert$ odd. Suppose $\beta,\gamma\in\mathscr{L}\setminus\Sigma$ with $\beta\neq\gamma$ (such $\beta,\gamma$ exist since $\Sigma$ is finite and $\mathscr{L}$ is infinite). Then $\lvert\Sigma\cup\{\beta,\gamma\}\rvert=\lvert\Sigma\rvert+2$ is also odd, and hence $\Sigma\cup\{\beta,\gamma\}$ is nontrivial. Thus $\Sigma$ is not maximal nontrivial. This is a contradiction. Hence no nontrivial set $\Gamma\subseteq\mathscr{L}$ is contained in a maximal nontrivial set. This implies that $(\mathscr{L},\vdash)$ cannot be of Lindenbaum-III-type.

Finally, to show that $(\mathscr{L},\vdash)$ is not of Lindenbaum-IV-type, let $\Gamma\cup\{\alpha\}\subseteq\mathscr{L}$ such that $\Gamma\nvdash\alpha$. If possible, suppose there exists a $\Sigma\supseteq\Gamma$ that is relatively maximal in $\alpha$. Then $\Sigma\nvdash\alpha$, which implies that $\Sigma$ is nontrivial, and hence finite with $\lvert\Sigma\rvert$ odd. Then, using the same argument as in the above proof of the fact that $(\mathscr{L},\vdash)$ is not of Lindenbaum-III-type, we have that $\Sigma\cup\{\beta,\gamma\}$ is nontrivial for $\beta,\gamma\in\mathscr{L}\setminus\Sigma$ with $\beta\neq\gamma$. Now, by definition of the logical structure $(\mathscr{L},\vdash)$, this implies that $C_\vdash(\Sigma\cup\{\beta,\gamma\})=\emptyset$, and hence in particular, $\Sigma\cup\{\beta,\gamma\}\nvdash\alpha$. This contradicts the assumption that $\Sigma$ is relatively maximal in $\alpha$. Hence, there cannot be any $\Gamma\cup\{\alpha\}\subseteq\mathscr{L}$ with $\Gamma\nvdash\alpha$, such that there exists a $\Sigma\supseteq\Gamma$ that is relatively maximal in $\alpha$. This implies that $(\mathscr{L},\vdash)$ cannot be of Lindenbaum-IV-type.

We can thus conclude that not every Lindenbaum-I/II-type logical structure is of Lindenbaum-III/IV-type.
\end{example}

\begin{remark}
It may be noted here that instead of having the finite subsets of odd cardinality as the only nontrivial subsets of $\mathscr{L}$, we could define $\vdash$ such that the only nontrivial sets are the ones in a collection $\mathcal{A}\subsetneq\{\Gamma\subseteq\mathscr{L}\mid\,\Gamma\hbox{ is finite}\}$, where $\mathcal{A}$ satisfies the following two conditions.
\begin{enumerate}[label=$\bullet$]
\item For every $\Gamma\in\mathcal{A}$ and $\alpha\notin\Gamma$, $\Gamma\cup\{\alpha\}\notin\mathcal{A}$.
\item For every finite $\Gamma\notin\mathcal{A}$ and $\alpha\notin\Gamma$, $\Gamma\cup\{\alpha\}\in\mathcal{A}$.
\end{enumerate}
Similar arguments, as in the above example, then lead to the same conclusions.
\end{remark}

\begin{example}[\textsc{Lindenbaum-III$\centernot\implies$Lindenbaum-II}]\label{exm:LindIII≠>LindII}
Let $(\mathscr{L},\vdash)$ be a logical structure, where $\mathscr{L}$ is an infinite set and $\vdash\,\subseteq\,\mathcal{P}(\mathscr{L})\times\mathscr{L}$ be such that for any $\Gamma\subseteq\mathscr{L}$, 
\[
C_{\vdash}(\Gamma)=\begin{cases}
\Lambda_0,&\hbox{if $\Gamma$ is finite},\\
\mathscr{L},&\hbox{if $\Gamma$ is infinite and }\Gamma\subsetneq\mathscr{L},
\\\mathscr{L}\setminus\Lambda_0,&\hbox{otherwise},
\end{cases}
\]
where $\emptyset\neq\Lambda_0\subsetneq\mathscr{L}$ is fixed. 

We first show that $(\mathscr{L},\vdash)$ is not of Lindenbaum-II-type. Let $\Gamma\cup\{\alpha\}\subseteq\mathscr{L}$ such that $\Gamma$ is finite and $\alpha\notin\Lambda_0$ (such an $\alpha$ exists since $\Lambda_0\subsetneq\mathscr{L}$). Then, by definition of $\vdash$, $\Gamma\nvdash\alpha$. If possible, suppose there exists an $\alpha$-saturated $\Sigma\supseteq\Gamma$. This implies that $\Sigma\nvdash\alpha$, and so, $\Sigma$ must be finite. Let $\beta\in\mathscr{L}\setminus\Sigma$ (again, such a $\beta$ exists as $\mathscr{L}$ is infinite). However, this implies that  $\Sigma\cup\{\beta\}$ is finite, and hence $\Sigma\cup\{\beta\}\nvdash\alpha$. This contradicts the assumption that $\Sigma$ is $\alpha$-saturated. Thus $\Gamma$ is not contained in any $\alpha$-saturated set. Hence $(\mathscr{L},\vdash)$ is not of Lindenbaum-II-type.  

Now, to show that $(\mathscr{L},\vdash)$ is of Lindenbaum-III-type, we first note that $\mathscr{L}\nvdash\beta$ for all $\beta\in\Lambda_0$, and hence is nontrivial. Clearly, $\mathscr{L}$ is also maximal nontrivial and contains every nontrivial subset of itself. 

We can thus conclude that not every Lindenbaum-III-type logical structure is of Lindenbaum-II-type.
\end{example}

\begin{remark}
It may be noted here that instead of using a fixed nonempty set $\Lambda_0\subsetneq\mathscr{L}$, as in the above example, we could define $C_\vdash(\Gamma)=\emptyset$ for each finite $\Gamma\subseteq\mathscr{L}$. This would lead to a different logical structure $(\mathscr{L},\vdash)$ that is also of Lindenbaum-III-type but not of Lindenbaum-II-type.
\end{remark}

\begin{corollary}
We can now use the results obtained thus far to conclude the following.
\begin{enumerate}[label=(\arabic*)]
    \item Not every Lindenbaum-I-type logical structure is of Lindenbaum-II-type.
    \item Not every Lindenbaum-III-type logical structure is of Lindenbaum-IV-type.
\end{enumerate}
\end{corollary}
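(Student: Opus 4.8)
The plan is to recognize that both separations are witnessed by the single logical structure already constructed in Example~\ref{exm:LindIII≠>LindII}, so that no new construction is needed; the only work is to combine the properties of that example with the implications recorded in Theorem~\ref{thm:Lind}. In particular, I would not build any fresh counterexample, but rather point out that the structure we already have does double duty.

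First I would recall that the structure $(\mathscr{L},\vdash)$ of Example~\ref{exm:LindIII≠>LindII} is of Lindenbaum-III-type but not of Lindenbaum-II-type. For statement (1), I would invoke Theorem~\ref{thm:Lind}(3), which tells us that every Lindenbaum-III-type structure is of Lindenbaum-I-type. Hence this $(\mathscr{L},\vdash)$ is of Lindenbaum-I-type while failing to be of Lindenbaum-II-type, which is exactly the separation claimed in (1).

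For statement (2), I would argue by contraposition using Theorem~\ref{thm:Lind}(2): since every Lindenbaum-IV-type structure is of Lindenbaum-II-type, any structure that is not of Lindenbaum-II-type cannot be of Lindenbaum-IV-type. As the same $(\mathscr{L},\vdash)$ is of Lindenbaum-III-type but not of Lindenbaum-II-type, it is therefore of Lindenbaum-III-type but not of Lindenbaum-IV-type, which establishes (2).

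There is essentially no genuine obstacle here, since all the substantive verification was carried out in establishing Example~\ref{exm:LindIII≠>LindII}; the corollary is a bookkeeping consequence of that example together with the inclusions in Theorem~\ref{thm:Lind}. The only point requiring slight care is to make the contrapositive step in (2) explicit, namely that the failure of Lindenbaum-II-type propagates upward to rule out Lindenbaum-IV-type, so that a structure need not be reconstructed for the second item.
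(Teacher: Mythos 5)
Your proposal is correct and takes essentially the same route as the paper: both separations are derived from the single structure of Example~\ref{exm:LindIII≠>LindII} combined with the inclusions of Theorem~\ref{thm:Lind} (Lindenbaum-III $\implies$ Lindenbaum-I for item (1), and Lindenbaum-IV $\implies$ Lindenbaum-II, used contrapositively, for item (2)). The only difference is presentational -- you exhibit the example directly as a witness, whereas the paper phrases each item as a proof by contradiction -- which is a trivial rephrasing of the same argument.
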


\begin{proof}
\begin{enumerate}[label=(\arabic*)]
    \item Suppose the contrary. By Theorem \ref{thm:Lind}, every Lindenbaum-III-type logical structure is also of Lindenbaum-I-type. Thus, every Lindenbaum-III-type logical structure must also be of Lindenbaum-II-type. This, however, is not the case as shown by Example \ref{exm:LindIII≠>LindII}. Hence, not every Lindenbaum-I-type logical structure is of Lindenbaum-II-type.
    
    \item Suppose the contrary. Again, by Theorem \ref{thm:Lind}, every Lindenbaum-IV-type logical structure is also of Lindenbaum-II-type. Thus, every Lindenbaum-III-type logical structure must also be of Lindenbaum-II-type. This is not the case as shown by Example \ref{exm:LindIII≠>LindII}. Hence, not every Lindenbaum-III-type logical structure is of Lindenbaum-IV-type.
\end{enumerate}
\end{proof}

\begin{definition}\label{def:fin}
A logical structure $(\mathscr{L},\vdash)$ is said to be \emph{finitary} if for all $\Gamma\cup\{\alpha\}\subseteq\mathscr{L}$, whenever $\Gamma\vdash\alpha$, there exists a finite $\Gamma_0\subseteq\Gamma$ such that $\Gamma_0\vdash\alpha$.
\end{definition}

We next state the following well-known theorem without proof. 

\begin{theorem}[\textsc{Lindenbaum-Asser Theorem}]\label{thm:LA}
Let $(\mathscr{L},\vdash)$ be a finitary monotone logical structure. Then, for all $\Gamma\cup\{\alpha\}\subseteq\mathscr{L}$ with $\Gamma\nvdash\alpha$, there exists a $\Sigma\supseteq\Gamma$ that is relatively maximal in $\alpha$.
\end{theorem}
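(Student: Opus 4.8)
The plan is to prove the Lindenbaum-Asser Theorem by a standard Zorn's Lemma argument applied to the poset $\liml(\Gamma,\alpha)$ of extensions of $\Gamma$ that do not derive $\alpha$. Fix $\Gamma\cup\{\alpha\}\subseteq\mathscr{L}$ with $\Gamma\nvdash\alpha$, so that $\Gamma\in\liml(\Gamma,\alpha)$ and this set is nonempty. Order $\liml(\Gamma,\alpha)$ by inclusion; a maximal element of this poset is exactly a set $\Sigma\supseteq\Gamma$ that is relatively maximal in $\alpha$, so by the characterization in Theorem~\ref{thm:Char_T4Lind} (specifically the equivalence of its statements (1) and (3)), it suffices to show every such poset has a maximal element, i.e.\ to verify the hypothesis of Zorn's Lemma.

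First I would take an arbitrary chain $\mathcal{C}\subseteq\liml(\Gamma,\alpha)$ and form its union $\Delta:=\bigcup_{\Sigma\in\mathcal{C}}\Sigma$. Clearly $\Gamma\subseteq\Delta$, so the only thing to check is that $\Delta\nvdash\alpha$, which would place $\Delta$ in $\liml(\Gamma,\alpha)$ as an upper bound for $\mathcal{C}$. Suppose for contradiction that $\Delta\vdash\alpha$. This is where finitarity does the essential work: by Definition~\ref{def:fin}, there exists a finite subset $\Delta_0\subseteq\Delta$ with $\Delta_0\vdash\alpha$. Since $\Delta_0$ is finite and $\mathcal{C}$ is a chain, each of the finitely many elements of $\Delta_0$ lies in some member of $\mathcal{C}$, and by taking the largest of these finitely many members (which exists precisely because $\mathcal{C}$ is totally ordered by $\subseteq$) we obtain a single $\Sigma^\ast\in\mathcal{C}$ with $\Delta_0\subseteq\Sigma^\ast$.

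Now monotonicity closes the argument: from $\Delta_0\vdash\alpha$ and $\Delta_0\subseteq\Sigma^\ast$, Definition~\ref{def:Tarski-type}(ii) gives $\Sigma^\ast\vdash\alpha$, contradicting $\Sigma^\ast\in\liml(\Gamma,\alpha)$. Hence $\Delta\nvdash\alpha$, so $\Delta\in\liml(\Gamma,\alpha)$ is an upper bound for $\mathcal{C}$. Since $\mathcal{C}$ was arbitrary, every chain in $\liml(\Gamma,\alpha)$ has an upper bound, and Zorn's Lemma yields a maximal element $\Sigma\supseteq\Gamma$, which is relatively maximal in $\alpha$ as required.

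The main obstacle is the step showing $\Delta\nvdash\alpha$, and in particular the reduction of a derivation from the infinite union $\Delta$ to a derivation from a single chain member. This is exactly the point where both hypotheses are indispensable: finitarity compresses the hypothetical derivation $\Delta\vdash\alpha$ down to a finite witness set, and monotonicity then lifts the derivation back up to the chain element that already contains that finite set. Neither hypothesis can be dropped — without finitarity the union could derive $\alpha$ without any finite subset doing so, and without monotonicity containing the witness set would not guarantee deriving $\alpha$ — so this is the conceptual heart of the theorem even though each individual inference is routine.
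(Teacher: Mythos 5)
Your proof is correct. The paper deliberately states Theorem \ref{thm:LA} without proof, remarking only that it is well known and (as shown in the cited works of Dzik and Miller) equivalent to the axiom of choice, so there is no in-text argument to compare against; your Zorn's-Lemma argument on $\liml(\Gamma,\alpha)$ is the standard one, and it is essentially the same chain-union technique the paper itself deploys in the theorem following Theorem \ref{thm:Char_T4Lind}, where reflexivity and cut (rather than finitarity and monotonicity) are used to show that the union of a chain in $\liml(\Gamma,\alpha)$ remains in $\liml(\Gamma,\alpha)$. Your division of labour is exactly right: finitarity compresses a putative derivation $\Delta\vdash\alpha$ to a finite witness, the chain condition places that witness inside a single member $\Sigma^\ast$, monotonicity lifts the derivation to $\Sigma^\ast$ for the contradiction, and the appeal to (3)$\implies$(1) of Theorem \ref{thm:Char_T4Lind} legitimately converts maximality in $\liml(\Gamma,\alpha)$ into relative maximality in $\alpha$, since any proper extension of a set containing $\Gamma$ still contains $\Gamma$.
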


\begin{remark}
The proof of the above theorem requires the axiom of choice, as pointed out in \cite{Beziau2001}. In fact, the Lindenbaum-Asser Theorem (Theorem \ref{thm:LA}) is equivalent to the axiom of choice. For a proof of this result, see \cite{Dzik1981,Miller2007}.
\end{remark}

The following observation is now a straightforward corollary of Theorem \ref{thm:LA}.

\begin{corollary}\label{cor:finTarski=>LindIV}
Any finitary monotone logical structure is of Lindenbaum-IV-type, and hence, also of Lindenbaum-II/I-type, by Theorem \ref{thm:Lind}. Thus, any finitary Tarski-type logical structure is of Lindenbaum-IV-type, and hence also of Lindenbaum-II/I-type.
\end{corollary}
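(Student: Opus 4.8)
The plan is to read the first assertion off the Lindenbaum-Asser Theorem (Theorem \ref{thm:LA}) essentially verbatim, and then cascade through the inclusions already established in Theorem \ref{thm:Lind}. First I would observe that the conclusion of Theorem \ref{thm:LA}, namely that for every $\Gamma\cup\{\alpha\}\subseteq\mathscr{L}$ with $\Gamma\nvdash\alpha$ there exists a $\Sigma\supseteq\Gamma$ that is relatively maximal in $\alpha$, is precisely the defining condition of a Lindenbaum-IV-type logical structure recorded in Definition \ref{def:Lind-type}\ref{def:LindIV}. Consequently, if $(\mathscr{L},\vdash)$ is finitary and monotone, then Theorem \ref{thm:LA} applies and delivers, with no further argument, that $(\mathscr{L},\vdash)$ is of Lindenbaum-IV-type.

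Next I would invoke statement (2) of Theorem \ref{thm:Lind}, which asserts that every Lindenbaum-IV-type logical structure is of Lindenbaum-II-type and hence also of Lindenbaum-I-type. Chaining this with the previous paragraph upgrades the conclusion to the full first claim: every finitary monotone logical structure is of Lindenbaum-IV-, II-, and I-type.

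For the second assertion, I would note that by Definition \ref{def:Tarski-type} every Tarski-type logical structure is in particular monotone. Hence a finitary Tarski-type logical structure is a finitary monotone logical structure, so the preceding two paragraphs apply unchanged and yield that it is of Lindenbaum-IV-type, and therefore also of Lindenbaum-II/I-type.

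There is essentially no obstacle here: the statement is a genuine corollary, and its proof amounts to matching the hypotheses of Theorem \ref{thm:LA} against Definition \ref{def:Lind-type}\ref{def:LindIV} and then following the implications of Theorem \ref{thm:Lind}. The only point requiring any attention is the observation that monotonicity is the sole property of a Tarski-type structure that the argument uses, with reflexivity and transitivity playing no role, which is exactly what permits the Tarski-type case to collapse immediately onto the monotone case.
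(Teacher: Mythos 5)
Your proposal is correct and follows exactly the route the paper intends: the conclusion of Theorem \ref{thm:LA} is verbatim the defining condition of Definition \ref{def:Lind-type}\ref{def:LindIV}, the cascade to Lindenbaum-II/I is Theorem \ref{thm:Lind}(2), and the Tarski-type case reduces to the monotone case since Tarski-type structures are monotone by Definition \ref{def:Tarski-type}. The paper offers no further argument beyond calling this ``a straightforward corollary of Theorem \ref{thm:LA},'' so your write-up simply makes the same reasoning explicit.
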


\begin{remark}
Thus, it follows that the underlying logical structure of any finitary Tarski-type logic is of Lindenbaum-IV-type. The prototypical example of such a logical structure is the one underlying classical propositional logic. The underlying logical structures of the logics mentioned in \cite{Arieli_Avron2015} are also of Lindenbaum-IV-type, and hence also of Lindenbaum-II/I-type.
\end{remark}

\begin{example}[\textsc{Lindenbaum-IV$\centernot\implies$Lindenbaum-III}]\label{exm:LindIVnimpLindIII}
We consider the logical structure $(\mathbb{N},\vdash)$, where $\mathbb{N}$ denotes the set of natural numbers, and $\vdash\,\subseteq\,\mathcal{P}(\mathbb{N})\times\mathbb{N}$ is such that, for any $\Gamma\subseteq\mathbb{N}$, 
\[
C_{\vdash}(\Gamma)=\begin{cases}
\mathbb{N},&\hbox{if $\Gamma$ is infinite},\\
\mathbb{N}\setminus\{n\},&\text{if $\Gamma$ is finite and }\lvert\Gamma\rvert=n.
\end{cases}
\]
We first show that $(\mathbb{N},\vdash)$ is of Lindenbaum-IV-type. Let $\Gamma\cup\{n\}\subseteq\mathbb{N}$ such that $\Gamma\nvdash n$. Then $\Gamma$ must be finite with $\lvert\Gamma\rvert=n$. Now, suppose $\Sigma\supsetneq\Gamma$. If $\Sigma$ is infinite, then by definition of $\vdash$, $\Sigma$ is trivial, and so, in particular, $\Sigma\vdash n$. On the other hand, if $\Sigma$ is finite, then since $\Sigma\supsetneq\Gamma$, $\lvert\Gamma\rvert=n<\lvert\Sigma\rvert$. Let $\lvert\Sigma\rvert=m$. Then, again by definition of $\vdash$, $C_\vdash(\Sigma)=\mathbb{N}\setminus\{m\}$. This implies that $n\in C_\vdash(\Sigma)$, and hence $\Sigma\vdash n$. Thus $\Sigma\vdash n$ for any $\Sigma\supsetneq\Gamma$. Hence $\Gamma$ is relatively maximal in $n$. In other words, $\Gamma$ is an extension of itself that is relatively maximal in $n$. So, we can conclude that $(\mathbb{N},\vdash)$ is a Lindenbaum-IV-type logical structure.

Now, to show that $(\mathbb{N},\vdash)$ is not of Lindenbaum-III-type, suppose the contrary. Let $\Gamma\subseteq\mathbb{N}$ be nontrivial. Then, there must exist a maximal nontrivial $\Sigma\supseteq\Gamma$. This implies that $\Sigma$ is nontrivial, and hence finite. Let $r\in\mathbb{N}\setminus\Sigma$ (such an $r$ exists, since $\Sigma$ is finite). Clearly, $\Sigma\cup\{r\}\supsetneq\Sigma$ is finite, and hence nontrivial. However, this contradicts the assumption that $\Sigma$ is maximal nontrivial. Hence $(\mathbb{N},\vdash)$ is not of Lindenbaum-III-type.
\end{example}

The results of this subsection have been summarized in Figure~\ref{fig:x}.

\begin{figure}[ht]
\centering
\begin{tikzcd}
                                 &                                  & \text{Lindenbaum-I}                \\
                                 &                                  &                                    \\
                                 & \text{Lindenbaum-II} \arrow[ruu] &                                    \\
                                 &                                  &                                    \\
\text{Lindenbaum-IV} \arrow[ruu] &                                  & \text{Lindenbaum-III} \arrow[uuuu]
\end{tikzcd}
\caption{Comparison between Lindenbaum-type logical structures}\label{fig:x}
\end{figure}
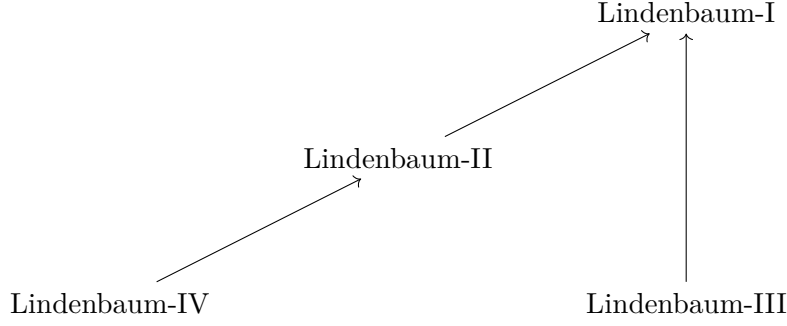

\FloatBarrier

\section{Tarski-type vis-\`a-vis Lindenbaum-type}\label{sec:T_R_L}

We have discussed the Tarski-type logical structures in Section \ref{sec:Tarski} and various Lindenbaum-type logical structures in Section \ref{sec:Lind}. A natural question that one may ask at this point is whether there is any dependency between these classes of logical structures.

It has been proved in \cite{Beziau1999} that not every Tarski-type logical structure is of Lindenbaum-I-type (weak Lindenbaum-Asser logic in the terminology there). Nevertheless, we give a different example below to illustrate this fact.

\begin{example}[\textsc{Tarski$\centernot\implies$Lindenbaum-I}]
Let $(\mathscr{L}_\mathbf{CL},\vdash_{\mathbf{CL}})$ denote classical propositional logic, as before. We define a new logical structure $(\lvert\mathscr{L}_\mathbf{CL}\rvert,\vdash_f)$, where $\vdash_f\subseteq\mathcal{P}\left(\lvert{\mathscr{L}_\mathbf{CL}\rvert}\right)\times\lvert{\mathscr{L}_\mathbf{CL}\rvert}$ is such that, for all $\Gamma\subseteq\lvert\mathscr{L}_\mathbf{CL}\rvert$,
\[
C_{\vdash_f}(\Gamma)=\begin{cases}
\Gamma,&\hbox{if }\Gamma\hbox{ is finite},\\
\lvert\mathscr{L}_\mathbf{CL}\rvert,&\hbox{otherwise}.
\end{cases}
\]
Clearly, $\lvert\mathscr{L}_\mathbf{CL}\rvert$ is infinite. Now, to show that $(\lvert\mathscr{L}_\mathbf{CL}\rvert,\vdash_f)$ is of Tarski-type, let $\Gamma,\Sigma\subseteq\lvert\mathscr{L}_\mathbf{CL}\rvert$. 

Suppose $\Sigma\subseteq C_{\vdash_f}(\Gamma)$. Now, if $\Gamma$ is finite, then $C_{\vdash_f}(\Gamma)=\Gamma$ is finite. So, $\Sigma$ must be finite. Thus in this case, $C_{\vdash_f}(\Sigma)=\Sigma\subseteq C_{\vdash_f}(\Gamma)$. On the other hand, if $\Gamma$ is infinite, then $C_{\vdash_f}(\Gamma)=\lvert\mathscr{L}_\mathbf{CL}\rvert$, and hence $C_{\vdash_f}(\Sigma)\subseteq C_{\vdash_f}(\Gamma)$.

Conversely, suppose $C_{\vdash_f}(\Sigma)\subseteq C_{\vdash_f}(\Gamma)$. Now, if $\Sigma$ is finite, then $C_{\vdash_f}(\Sigma)=\Sigma$. So, $\Sigma\subseteq C_{\vdash_f}(\Gamma)$ in this case. On the other hand, if $\Sigma$ is infinite, then $C_{\vdash_f}(\Sigma)=\lvert\mathscr{L}_\mathbf{CL}\rvert$, which implies that $C_{\vdash_f}(\Gamma)=\lvert\mathscr{L}_\mathbf{CL}\rvert$. Hence $\Sigma\subseteq C_{\vdash_f}(\Gamma)$.

Thus, for all $\Gamma,\Sigma\subseteq\lvert\mathscr{L}_\mathbf{CL}\rvert$, $\Sigma\subseteq C_{\vdash_f}(\Gamma)$ iff $C_{\vdash_f}(\Sigma)\subseteq C_{\vdash_f}(\Gamma)$. Hence, by using statement (4) of Theorem \ref{thm:Char_Tar}, we can conclude that $(\lvert\mathscr{L}_\mathbf{CL}\rvert,\vdash_f)$ is of Tarski-type.

We claim that $(\lvert\mathscr{L}_\mathbf{CL}\rvert,\vdash_f)$ is, however, not of Lindenbaum-I-type. Suppose the contrary. 

Let $\Gamma\subseteq\lvert\mathscr{L}_\mathbf{CL}\rvert$ be nontrivial. Then, by definition of Lindenbaum-I-type logical structures, there must exist a saturated $\Sigma\supseteq\Gamma$. Suppose $\Sigma$ is $\alpha$-saturated. So, $\Sigma\nvdash_f\alpha$, and hence $C_{\vdash_f}(\Sigma)\ne\lvert\mathscr{L}_\mathbf{CL}\rvert$. Then, $C_{\vdash_f}(\Sigma)=\Sigma$. Thus $\Sigma$ must be finite with $\alpha\notin\Sigma$. Now, let $\beta\in \lvert\mathscr{L}_\mathbf{CL}\rvert\setminus(\Sigma\cup\{\alpha\})$. Then, since $\Sigma\cup\{\beta\}$ is finite and $\alpha\notin\Sigma\cup\{\beta\}$, $\Sigma\cup\{\beta\}\nvdash_f\alpha$. This implies that $\Sigma$ is not $\alpha$-saturated, a contradiction. So, we can conclude that $\Sigma$ cannot be $\alpha$-saturated for any $\alpha\in\lvert\mathscr{L}_\mathbf{CL}\rvert$, and hence is not saturated. Thus $(\lvert\mathscr{L}_\mathbf{CL}\rvert,\vdash_f)$ is not of Lindenbaum-I-type.
\end{example}

\begin{remark}
We know, by Theorem \ref{thm:Lind}, that every logical structure which is of Lindenbaum-II/III/IV-type is of Lindenbaum-I-type as well. Thus, we can conclude, from the above example, that not every Tarski-type logical structure is of a Lindenbaum type. However, as pointed out in Corollary \ref{cor:finTarski=>LindIV}, every finitary Tarski-type logical structure is of Lindenbaum-IV/II/I-type. 
\end{remark}

\begin{example}[\textsc{Lindenbaum-III/IV$\centernot\implies$Tarski}]
Let $(\mathscr{L},\vdash)$ be a logical structure, where $\mathscr{L}=\{0,1,2\}$ and $\vdash$ is such that 
\[
C_\vdash(\{0\})=C_\vdash(\{0,1\})=\{0,1\},
\]
and $C_\vdash(\Gamma)=\mathscr{L}$ for any other $\Gamma\subseteq\mathscr{L}$.

We note that a set $\Gamma\subseteq \mathscr{L}$ is nontrivial iff $\Gamma\nvdash 2$ and $\{0,1\}$ is a maximal 2-saturated set that contains every nontrivial $\Gamma\subseteq\mathscr{L}$. Thus, by using statement (2) of Theorem \ref{thm:Char_T4Lind}, we can conclude that $(\mathscr{L},\vdash)$ is of Lindenbaum-IV-type. 

Moreover, $\{0,1\}$ is a maximal nontrivial set that extends every nontrivial $\Gamma\subseteq\mathscr{L}$. Hence $(\mathscr{L},\vdash)$ is also of Lindenbaum-III-type.

However, $\emptyset$ is trivial but is contained in the nontrivial sets $\{0\}$ and $\{0,1\}$. Thus $(\mathscr{L},\vdash)$ is not monotone, and hence not of Tarski-type. 
\end{example}

\begin{remark}
We can now conclude the following from the above counterexample.
\begin{enumerate}[label=(\arabic*)]
    \item Not every Lindenbaum-I-type logical structure is of Tarski-type.
    
    \noindent To prove this, suppose the contrary. By Theorem \ref{thm:Lind}, we know that every Lindenbaum-III-type logical structure is of Lindenbaum-I-type, and hence of Tarski-type. This, however, is not the case, as established by the above example. Hence, not every Lindenbaum-I-type logical structure is of Tarski-type.
    
    \item Not every Lindenbaum-II-type logical structure is of Tarski-type.
    
    \noindent To prove this, suppose the contrary. By Theorem \ref{thm:Lind}, we know that every Lindenbaum-IV-type logical structure is of Lindenbaum-II-type, and hence of Tarski-type. This, however, is not the case, as established by the above example. Hence, not every Lindenbaum-II-type logical structure is of Tarski-type.
\end{enumerate}
\end{remark}

The results of the current section and Subsection \ref{subsec:Lind-rel} can thus be summarized as follows. All five types of logical structures, viz., the four Lindenbaum-types and the Tarski-type are distinct.

\section{Tarski-Lindenbaum-type Logical Structures}\label{sec:Tar-Lind}
This section is devoted to discussing logical structures that are of Tarski-type as well as of a Lindenbaum-type.

\begin{definition}{\label{def:Tarski_Lind-type}}
A logical structure $(\mathscr{L},\vdash)$ is said to be of:

\begin{enumerate}[label=(\alph*)]
    \item \emph{Tarski-Lindenbaum-I-type} (henceforth abbreviated as $\tl{1}$-type) if it is of Tarski- as well as of Lindenbaum-I-type.
     \item \emph{Tarski-Lindenbaum-II-type} (henceforth abbreviated as $\tl{2}$-type) if it is of Tarski- as well as of Lindenbaum-II-type.
     \item \emph{Tarski-Lindenbaum-III-type} (henceforth abbreviated as $\tl{3}$-type) if it is of Tarski- as well as of Lindenbaum-III-type.
     \item \emph{Tarski-Lindenbaum-IV-type} (henceforth abbreviated as $\tl{4}$-type) if it is of Tarski- as well as of Lindenbaum-IV-type.
\end{enumerate}
\end{definition}

We first discuss the relationships between these classes of logical structures. To begin with, the following theorem lists some straightforward observations.

\begin{theorem}\label{thm:TarLind}
    Let $(\mathscr{L},\vdash)$ be a logical structure.
    \begin{enumerate}[label=(\arabic*)]
        \item If $(\mathscr{L},\vdash)$ is of $\tl{i}$-type, where $i=2,3,4$, then it is of $\tl{1}$-type.
        \item $(\mathscr{L},\vdash)$ is of $\tl{2}$-type iff it is of $\tl{4}$-type.
    \end{enumerate}
\end{theorem}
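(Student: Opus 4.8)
The plan is to prove each of the two statements using the results already established in the excerpt, since both should follow fairly directly from definitions and earlier theorems.

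For statement (1), I would observe that by Definition \ref{def:Tarski_Lind-type}, being of $\tl{i}$-type means being of both Tarski-type and Lindenbaum-$i$-type. Thus, if $(\mathscr{L},\vdash)$ is of $\tl{i}$-type for $i=2,3,4$, then in particular it is of Lindenbaum-$i$-type. By Theorem \ref{thm:Lind}, each of Lindenbaum-II, Lindenbaum-III, and Lindenbaum-IV implies Lindenbaum-I. Hence $(\mathscr{L},\vdash)$ is of Lindenbaum-I-type. Since it is also of Tarski-type by hypothesis, it is of $\tl{1}$-type by Definition \ref{def:Tarski_Lind-type}. This handles statement (1) in a single line of reasoning for each value of $i$.

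For statement (2), the key observation is that any $\tl{2}$- or $\tl{4}$-type logical structure is, by definition, of Tarski-type, and therefore monotone (monotonicity is condition (ii) in Definition \ref{def:Tarski-type}). I would then invoke Theorem \ref{thm:MonLind=>(T2Lind<=>T4Lind)}, which states precisely that for a \emph{monotone} logical structure, being of Lindenbaum-IV-type is equivalent to being of Lindenbaum-II-type. So if $(\mathscr{L},\vdash)$ is of $\tl{2}$-type, it is Tarski (hence monotone) and Lindenbaum-II, so by Theorem \ref{thm:MonLind=>(T2Lind<=>T4Lind)} it is also Lindenbaum-IV, and being Tarski it is therefore $\tl{4}$-type; the converse direction is symmetric.

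I do not anticipate any genuine obstacle here, as both parts are essentially bookkeeping over the definitions together with the already-proved Theorems \ref{thm:Lind} and \ref{thm:MonLind=>(T2Lind<=>T4Lind)}. The only point requiring minor care is making explicit that Tarski-type entails monotonicity so that Theorem \ref{thm:MonLind=>(T2Lind<=>T4Lind)} applies; once that is noted, statement (2) is immediate in both directions.
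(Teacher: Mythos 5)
Your proposal is correct and follows exactly the paper's own proof: statement (1) is deduced from Theorem \ref{thm:Lind} and statement (2) from Theorem \ref{thm:MonLind=>(T2Lind<=>T4Lind)}, with your explicit note that Tarski-type entails monotonicity being the same (implicit) observation the paper relies on.
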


\begin{proof}
     Statement (1) follows from Theorem \ref{thm:Lind}.
     
     Statement (2) follows from Theorem \ref{thm:MonLind=>(T2Lind<=>T4Lind)}.
\end{proof}

The following examples now show that none of the other possible inclusions between the classes of Tarski-Lindenbaum-type logical structures holds. 

For the first two examples, we use the concept of ordinal numbers, more specifically, the von Neumann ordinal numbers. The definition and some important properties (without proof) of these are provided below for the readers' sake. These are based on the discussion in \cite[Chapter I, Section 7]{Kunen2011}. 

\begin{definition}[\textsc{von Neumann Ordinal Number}]\label{def:ord}
A set $\alpha$ is called a \emph{(von Neumann) ordinal number} or, simply an \emph{ordinal}, if it satisfies the following conditions\footnote{These conditions are usually stated in a compact form as follows. A transitive set well-ordered by $\in$ is a von Neumann ordinal. We avoid the terminology here as this is beside the main aim of the present article.}.
\begin{enumerate}[label=(\roman*)]
\item For all $x\in \alpha$, $x\subseteq \alpha$.
\item For all $x\in \alpha$, $x\notin x$.
\item For all $x,y,z\in \alpha$, if $x\in y$ and $y\in z$ then $x\in z$.
\item For all $x,y\in \alpha$ either $x\in y$, or $x=y$, or $y\in x$.
\item For every nonempty $x\subseteq\alpha$, there exists $z\in x$ such that $x\cap z=\emptyset$.
\end{enumerate}
\end{definition}

\begin{theorem}{\cite[Lemmas I.7.5, I.7.8, I.7.10, I.7.11, I.7.14]{Kunen2011}}\label{thm:ord_properties(I)}
Suppose $\alpha,\beta,\gamma$ are ordinals. Then the following statements hold.
\begin{enumerate}[label=(\arabic*)] 
    \item $\alpha\notin\alpha$.\label{thm:ord_irreflexive}
    \item If $\delta\in\alpha$, then $\delta$ is also an ordinal. \label{thm:ord_belong}
    \item $\beta\subseteq\alpha$ iff $\beta\in\alpha$ or $\beta=\alpha$. \label{thm:ord_subset_belong}
    \item $\alpha\in \beta$ and $\beta\in \gamma$ implies $\alpha\in \gamma$. \label{thm:ord_transitivity}
    \item Either $\alpha\in \beta$ or $\alpha=\beta$ or $\beta\in \alpha$. \label{thm:ord_trichotomy}
    \item $\alpha\cup\beta$ and $\alpha\cap\beta$ are ordinals. Moreover, if $X$ is a nonempty set of ordinals then $\displaystyle\bigcup X$ and $\displaystyle\bigcap X$ are  ordinals.\label{thm:uni_int}
\end{enumerate}
\end{theorem}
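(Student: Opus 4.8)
Although this result is quoted verbatim from \cite{Kunen2011}, it is instructive to indicate how each part follows directly from Definition \ref{def:ord}; the plan is to establish the statements in an order that lets the later ones reuse the earlier. The genuinely easy parts come first. For (1), suppose $\alpha\in\alpha$; then $\alpha$ is itself a member of $\alpha$, so condition (ii) of Definition \ref{def:ord} applied to $x=\alpha$ yields $\alpha\notin\alpha$, a contradiction. For (2), given $\delta\in\alpha$, condition (i) gives $\delta\subseteq\alpha$, so every member of $\delta$ is a member of $\alpha$; conditions (ii)--(v) for $\delta$ are then immediate restrictions of the same conditions for $\alpha$, while condition (i) for $\delta$ (that each $x\in\delta$ satisfies $x\subseteq\delta$) follows from the transitivity condition (iii) applied inside $\alpha$. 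For (4), from $\beta\in\gamma$ and condition (i) we get $\beta\subseteq\gamma$, whence $\alpha\in\beta$ forces $\alpha\in\gamma$.

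I would handle (6) next, since it is needed for (5). The cleanest route is to first note that a set $S$ is an ordinal precisely when $S$ is transitive (every member is a subset of $S$) and every member of $S$ is an ordinal; the forward direction is condition (i) together with (2), and the converse is a direct verification of (i)--(v) using that the members are ordinals. Granting this, $\alpha\cap\beta$ and $\alpha\cup\beta$ are transitive (a member lies in $\alpha$ or in $\beta$, hence is a subset of the relevant one, hence of the intersection, respectively the union) and have all members ordinals by (2); the same argument applies verbatim to $\bigcup X$ and $\bigcap X$ for a nonempty set $X$ of ordinals.

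The crux is (3), and (5) will fall out of it. The forward implication of (3) is condition (i) together with reflexivity of $\subseteq$. For the converse, assuming $\beta\subseteq\alpha$ with $\beta\neq\alpha$, the set $\alpha\setminus\beta$ is nonempty, so the well-foundedness condition (v) supplies a $\gamma\in\alpha\setminus\beta$ with $(\alpha\setminus\beta)\cap\gamma=\emptyset$, and I would then show $\gamma=\beta$. The inclusion $\gamma\subseteq\beta$ is immediate from $(\alpha\setminus\beta)\cap\gamma=\emptyset$ together with $\gamma\subseteq\alpha$ (any $x\in\gamma$ lying outside $\beta$ would sit in the empty intersection). For $\beta\subseteq\gamma$ I would take $x\in\beta$, note $x\in\alpha$, compare $x$ with $\gamma$ using trichotomy (iv) inside $\alpha$, and rule out $x=\gamma$ and $\gamma\in x$ (the latter using that $\beta$ is transitive, so $x\in\beta$ gives $x\subseteq\beta$, whence $\gamma\in x\subseteq\beta$ contradicts $\gamma\notin\beta$) to force $x\in\gamma$; thus $\beta=\gamma\in\alpha$. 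Finally, for (5) put $\delta=\alpha\cap\beta$, an ordinal by (6); applying (3) to $\delta\subseteq\alpha$ and to $\delta\subseteq\beta$ gives membership or equality in each case, and the possibility of $\delta\in\alpha$ and $\delta\in\beta$ simultaneously is excluded since it would give $\delta\in\delta$ against (1). Hence $\delta$ equals $\alpha$ or $\beta$, and substituting back yields $\alpha\in\beta$, $\alpha=\beta$, or $\beta\in\alpha$. The main obstacle throughout is the converse of (3): it is the only place where the regularity condition (v) does essential work, and some care is needed to identify the $\in$-minimal element of $\alpha\setminus\beta$ with $\beta$ itself.
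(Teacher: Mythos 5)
The paper itself does not prove this theorem --- it states it without proof, citing Kunen --- so your attempt has to be judged on its own terms. Your proofs of (1), (2), (3), and (4) are correct, and your derivation of trichotomy (5) from (3) together with the fact that $\alpha\cap\beta$ is an ordinal is the standard argument. The genuine gap is a circularity in how you organize (5) and (6). You prove (6) from the lemma that a set $S$ is an ordinal iff $S$ is transitive and every member of $S$ is an ordinal, asserting that the converse direction is ``a direct verification of (i)--(v)''. It is not: verifying condition (iv) of Definition \ref{def:ord} for $S$ requires that any two members $x,y\in S$ --- arbitrary ordinals, not assumed to lie in a common ambient ordinal --- satisfy $x\in y$, $x=y$, or $y\in x$. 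That is precisely statement (5), which in your scheme is proved \emph{after} (6), using (6). So as written, (6) rests on (5) and (5) rests on (6).

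The repair is to split (6) into two halves. For $\alpha\cap\beta$ (and for $\bigcap X$, which is contained in any fixed $\alpha_0\in X$) no trichotomy is needed: these sets are subsets of a single ordinal $\alpha$, and conditions (ii)--(v) of Definition \ref{def:ord} pass automatically to any subset of $\alpha$ --- in particular (iv) only calls for comparability of elements of $\alpha$, which the definition of $\alpha$ itself supplies --- while condition (i) is the direct check (note that your parenthetical argument here is garbled for the intersection: a member of $\alpha\cap\beta$ lies in \emph{both} $\alpha$ and $\beta$, hence is a subset of both, hence of $\alpha\cap\beta$; lying in ``$\alpha$ or $\beta$'' would not suffice). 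With the intersection case secured, your proof of (5) goes through verbatim. Only then should you handle $\alpha\cup\beta$ and $\bigcup X$, where condition (iv) genuinely concerns two ordinals with no common ambient ordinal: there your characterization lemma is the right tool, and it becomes legitimately provable once (5) is available (condition (v) for a union also deserves a line: given nonempty $x\subseteq\alpha\cup\beta$ with, say, $x\cap\alpha\neq\emptyset$, the $\in$-least element $z$ of $x\cap\alpha$ satisfies $x\cap z=\emptyset$ because $z\subseteq\alpha$). So all the ingredients are present in your sketch, but the deduction must run: intersection half of (6), then (5), then union half of (6).
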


Lastly, the following are the von Neumann ordinals that we use in the following examples (see \cite{Kunen2011} for more details).
\begin{itemize}
    \item $0=\emptyset, 1=\{0\}, 2=\{0,1\},\cdots$, and in general, for any natural number $n$, $n+1=n\cup\{n\}$. These are the finite ordinals.
    \item $\omega=\{n\mid\,n\hbox{ is a natural number}\}=\mathbb{N}$ is the first infinite ordinal.
    \item $\omega+1=\omega\cup\{\omega\}$, and for any natural number $n$, $\omega+(n+1)=(\omega+n)\cup\{\omega+n\}$.
    \item $\omega+\omega=\displaystyle\bigcup_{n\in\omega}(\omega+n)$.
\end{itemize}

\begin{example}[$\tl{1}\centernot\implies\tl{2}/\tl{3}$]\label{exm:TL1notTL2/TL3}
We consider the logical structure $(\omega+\omega,\vdash)$, where $\vdash\,\subseteq\mathcal{P}(\omega+\omega)\times(\omega+\omega)$ is such that, for any $\Gamma\subseteq\omega+\omega$,
\[
C_{\vdash}(\Gamma)=\begin{cases}
\displaystyle\bigcap\{\beta\in\omega+\omega\mid\,\Gamma\subseteq \beta\},&\hbox{if}~\displaystyle\bigcap\{\beta\in\omega+\omega\mid\,\Gamma\subseteq \beta\}\neq\omega,\\
\omega+1,&\hbox{otherwise}.
\end{cases}
\]
We define, for our convenience, $\OC(\Gamma):=\{\beta\in \omega+\omega\mid\,\Gamma\subseteq \beta\}$, for each $\Gamma\subseteq\omega+\omega$. So, the above definition of $C_{\vdash}$ can be rephrased as follows.
\[
C_{\vdash}(\Gamma)=\begin{cases}
\displaystyle\bigcap\OC(\Gamma)&\hbox{if}~\displaystyle\bigcap\OC(\Gamma)\ne\omega\\\omega+1&\hbox{otherwise}.
\end{cases}
\]
Suppose $\Gamma\subseteq\omega+\omega$. Then the following are easy observations. 
\begin{enumerate}[label=(\alph*)]
\item $\OC(\Gamma)\neq\emptyset$.
    
\noindent Suppose $\Gamma=\emptyset$. Then $\OC(\Gamma)=\{\beta\in\omega+\omega\mid\,\emptyset=\Gamma\subseteq\beta\}=\omega+\omega\neq\emptyset$.
    
\noindent Now, suppose $\Gamma\neq\emptyset$. So, there exists $\alpha\in \Gamma$. Moreover, since $\alpha\in \Gamma\subseteq \omega+\omega$, by Theorem~\ref{thm:ord_properties(I)}\ref{thm:ord_belong}, $\alpha$ is an ordinal, i.e. $\Gamma$ is a set of ordinals. Then, by Theorem~\ref{thm:ord_properties(I)}\ref{thm:uni_int}, $\lambda=\displaystyle\bigcup_{\alpha\in \Gamma}\alpha$ is an ordinal. Now, for any $\beta\in\Gamma$, $\beta\subseteq\lambda$. This implies, by Theorem~\ref{thm:ord_properties(I)}\ref{thm:ord_subset_belong}, that either $\beta\in\lambda$ or $\beta=\lambda$. Thus, $\Gamma\subseteq\lambda\cup\{\lambda\}$, which is also an ordinal. So, $\lambda\cup\{\lambda\}\in \OC(\Gamma)$.
    
\noindent Hence, in all cases, $\OC(\Gamma)$ is nonempty.
\item $C_\vdash(\Gamma)$ is an ordinal in $\omega+\omega$.
    
\noindent Since $\OC(\Gamma)\neq\emptyset$, by Theorem~\ref{thm:ord_properties(I)}\ref{thm:uni_int}, $\displaystyle\bigcap\OC(\Gamma)$ is an ordinal. Clearly, $\displaystyle\bigcap\OC(\Gamma)$ is in $\omega+\omega$, since each member of $\OC(\Gamma)$ is in $\omega+\omega$. So, if $\displaystyle\bigcap\OC(\Gamma)\neq\omega$, then $C_\vdash(\Gamma)$ is an ordinal in $\omega+\omega$. On the other hand, if $\displaystyle\bigcap\OC(\Gamma)=\omega$, then $C_\vdash(\Gamma)=\omega+1$, also an ordinal in $\omega+\omega$.
\item $C_\vdash(\Gamma)\neq\omega$.
    
\noindent If $\displaystyle\bigcap\OC(\Gamma)=\omega$, then $C_\vdash(\Gamma)=\omega+1\neq\omega$. On the other hand, if $\displaystyle\bigcap\OC(\Gamma)\neq\omega$, then $C_\vdash(\Gamma)=\displaystyle\bigcap\OC(\Gamma)\neq\omega$.
\item $\Gamma\subseteq C_\vdash(\Gamma)$.
    
\noindent We first note that $\Gamma\subseteq\beta$ for all $\beta\in\OC(\Gamma)$ and so, $\Gamma\subseteq\displaystyle\bigcap\OC(\Gamma)$.
    
\noindent Now, if $\displaystyle\bigcap\OC(\Gamma)\neq\omega$, then $C_\vdash(\Gamma)=\displaystyle\bigcap\OC(\Gamma)$, and hence, $\Gamma\subseteq C_\vdash(\Gamma)$. On the other hand, if  $\displaystyle\bigcap\OC(\Gamma)=\omega$, then $\Gamma\subseteq\displaystyle\bigcap\OC(\Gamma)=\omega\subseteq\omega+1=C_\vdash(\Gamma)$.
\end{enumerate}

Now, let $\Gamma,\Sigma\subseteq \omega+\omega$. If $C_\vdash(\Gamma)\subseteq C_\vdash(\Sigma)$ then by (d) above, $\Gamma\subseteq C_\vdash(\Gamma)\subseteq C_\vdash(\Sigma)$. Conversely, suppose $\Gamma\subseteq C_\vdash(\Sigma)$. We first note that, by (b) above, $C_\vdash(\Sigma)$ is an ordinal in $\omega+\omega$. Thus $\Gamma\subseteq C_\vdash(\Sigma)$ implies that $C_\vdash(\Sigma)\in\OC(\Gamma)$, and hence $\displaystyle\bigcap\OC(\Gamma)\subseteq C_\vdash(\Sigma)$. 

Now, if $\displaystyle\bigcap\OC(\Gamma)\neq\omega$, then clearly, $C_\vdash(\Gamma)=\displaystyle\bigcap\OC(\Gamma)\subseteq C_{\vdash}(\Sigma)$.

On the other hand, if $\displaystyle\bigcap\OC(\Gamma)=\omega$, then $\omega=\displaystyle\bigcap\OC(\Gamma)\subseteq C_{\vdash}(\Sigma)$. As noted in (b) and (c) above, $C_\vdash(\Sigma)$ is an ordinal such that $C_\vdash(\Sigma)\neq\omega$. So, by Theorem \ref{thm:ord_properties(I)}\ref{thm:ord_subset_belong}, $\omega\in C_\vdash(\Sigma)$, which implies that $\{\omega\}\subseteq C_\vdash(\Sigma)$. Hence
\[
C_\vdash(\Gamma)=\omega+1=\omega\cup\{\omega\}\subseteq C_{\vdash}(\Sigma).
\]
Thus, for all $\Gamma,\Sigma\subseteq\omega+\omega$, $\Gamma\subseteq C_\vdash(\Sigma)$ iff $C_\vdash(\Gamma)\subseteq C_\vdash(\Sigma)$. So, by Theorem~\ref{thm:Char_Tar}, $(\omega+\omega,\vdash)$ is of Tarski-type.

Next, before proceeding to prove that $(\omega+\omega,\vdash)$ is of Lindenbaum-I-type, we establish the following.

\textsc{Claim:} For any $\alpha\in\omega+\omega$ such that $\alpha\neq\omega$, $\alpha$ is $\alpha$-saturated.\hspace{\fill}($*$)

We first note that, since $\alpha$ is an ordinal in $\omega+\omega$, $\alpha\in\OC(\alpha)$. Thus, $\alpha\subseteq\displaystyle\bigcap\OC(\alpha)\subseteq\alpha$, i.e. $\displaystyle\bigcap\OC(\alpha)=\alpha$. Now, since $\alpha\neq\omega$, $C_\vdash(\alpha)=\displaystyle\bigcap\OC(\alpha)=\alpha$. Then, as $\alpha\notin\alpha$, $\alpha\notin C_\vdash(\alpha)$, i.e. $\alpha\nvdash\alpha$.

Now, if possible, suppose there exists $\delta\in(\omega+\omega)\setminus\alpha$ such that $\alpha\cup\{\delta\}\nvdash\alpha$. If $\displaystyle\bigcap\OC(\alpha\cup\{\delta\})=\omega$, then we have $C_\vdash(\alpha\cup\{\delta\})=\omega+1$. This implies that $\alpha\subsetneq\alpha\cup\{\delta\}\subseteq C_\vdash(\alpha\cup\{\delta\})=\omega+1$. So, by Theorem \ref{thm:ord_properties(I)}\ref{thm:ord_subset_belong}, $\alpha\in\omega+1$, which means that $\alpha\in C_\vdash(\alpha\cup\{\delta\})$. This is a contradiction. Hence $\displaystyle\bigcap\OC(\alpha\cup\{\delta\})\neq\omega$, and so, $C_\vdash(\alpha\cup\{\delta\})=\displaystyle\bigcap\OC(\alpha\cup\{\delta\})$. Then, $\alpha\cup\{\delta\}\nvdash\alpha$ implies that $\alpha\notin\displaystyle\bigcap\OC(\alpha\cup\{\delta\})$. So, there exists $\beta\in\omega+\omega$ such that $\alpha\cup\{\delta\}\subseteq\beta$ but $\alpha\notin\beta$. Thus, by Theorem \ref{thm:ord_properties(I)}\ref{thm:ord_trichotomy}, either $\beta\in\alpha$ or $\beta=\alpha$, i.e. $\beta\subseteq\alpha$ by Theorem \ref{thm:ord_properties(I)}\ref{thm:ord_subset_belong}. This implies that $\alpha\cup\{\delta\}\subseteq\alpha$, which is impossible since $\delta\notin\alpha$. Thus we can conclude that, for every $\delta\in(\omega+\omega)\setminus\alpha$, $\alpha\cup\{\delta\}\vdash\alpha$. This completes the proof of the claim that, for every $\alpha\in\omega+\omega$, if $\alpha\neq\omega$ then $\alpha$ is $\alpha$-saturated. 

Now, to show that $(\omega+\omega,\vdash)$ is of Lindenbaum-I-type, let $\Gamma$ be a nontrivial subset of $\omega+\omega$. Then there exists $\alpha\in \omega+\omega$ such that $\Gamma\nvdash\alpha$. We have the following cases.

\textsc{Case 1:} $\displaystyle\bigcap\OC(\Gamma)=\omega$. 

Then $\Gamma\subseteq C_\vdash(\Gamma)=\omega+1$, and by Claim ($*$), $\omega+1$ is $(\omega+1)$-saturated, and hence saturated.

\textsc{Case 2:} $\displaystyle\bigcap\OC(\Gamma)\neq\omega$.

In this case, $C_\vdash(\Gamma)=\displaystyle\bigcap\OC(\Gamma)$. Then, since $\Gamma\nvdash\alpha$, i.e. $\alpha\notin C_\vdash(\Gamma)$, there exists $\beta\in \omega+\omega$ such that $\Gamma\subseteq \beta$ but $\alpha\notin \beta$. Then, by Theorem~\ref{thm:ord_properties(I)}\ref{thm:ord_trichotomy}, either $\beta\in \alpha$ or $\beta=\alpha$, i.e. $\beta\subseteq \alpha$ by Theorem~\ref{thm:ord_properties(I)}\ref{thm:ord_subset_belong}. This implies that $\Gamma\subseteq\alpha$.

Now, if $\alpha\neq\omega$, then $\alpha$ is $\alpha$-saturated, by Claim ($*$), and hence saturated. On the other hand, if $\alpha=\omega$, then $\Gamma\subseteq\omega\subseteq\omega+1$, and again by Claim ($*$), $\omega+1$ is $(\omega+1)$-saturated, and hence saturated.

So, in all cases, $\Gamma$ is contained in a saturated set. This implies that $(\omega+\omega,\vdash)$ is of Lindenbaum-I-type, and hence of $\tl{1}$-type.

Moreover, we note that in all the cases above, the nontrivial set $\Gamma$ has a proper saturated extension. This is clear in Case 1, where $\Gamma\subseteq\displaystyle\bigcap\OC(\Gamma)=\omega$, and in Case 2, where $\displaystyle\bigcap\OC(\Gamma)\neq\omega$ and $\Gamma\nvdash\omega$ -- in both scenarios, $\Gamma$ is properly contained in the saturated set, $\omega+1$. In the remaining case, where $\Gamma\nvdash\alpha$ and $\alpha\neq\omega$, $\alpha$ is an $\alpha$-saturated extension of $\Gamma$. If $\Gamma\subsetneq\alpha$, then we are done. Suppose not, i.e. $\Gamma=\alpha$. Now, since $\alpha\in\omega+\omega=\displaystyle\bigcup_{n\in\omega}(\omega+n)$, there exists $n\in\omega$ such that $\alpha\in\omega+n$. This implies that $\{\alpha\}\subseteq\omega+n$, and by definition of an ordinal, $\alpha\subseteq\omega+n$. So,
\[
\alpha\cup\{\alpha\}\subseteq\omega+n\subsetneq(\omega+n)\cup\{\omega+n\}=\omega+(n+1)\subseteq\omega+\omega.
\]
Thus $\alpha\cup\{\alpha\}\subsetneq\omega+\omega$, and by Theorem \ref{thm:ord_properties(I)}\ref{thm:uni_int}, it is an ordinal, often denoted by $\alpha+1$. Hence, by Theorem \ref{thm:ord_properties(I)}\ref{thm:ord_subset_belong} $\alpha\cup\{\alpha\}=\alpha+1\in\omega+\omega$. Now, if possible, suppose $\alpha+1=\omega$. Then $\alpha\in\omega$. This implies that $\alpha=n$ for some $n\in\omega$. Then $\alpha\in n+1$ This implies that $\{\alpha\}\subsetneq n+1$. Hence $\alpha\cup\{\alpha\}=\alpha+1\subseteq n+1\subsetneq n+2\subseteq\omega$. However, this means that $\omega=\alpha+1\subsetneq\omega$, which is impossible. Hence $\alpha+1\neq\omega$. So, by Claim ($*$), $\alpha+1$ is $(\alpha+1)$-saturated, and hence nontrivial. Now, $\Gamma=\alpha\subsetneq\alpha+1$. So, $\alpha+1$ is a proper saturated extension of $\Gamma$.

Hence every nontrivial $\Gamma\subseteq\omega+\omega$ has a proper saturated, and hence nontrivial, extension. This implies that there does not exist any maximal nontrivial subset of $\omega+\omega$. Therefore, we can conclude that $(\omega+\omega,\vdash)$ is not of Lindenbaum-III-type and thus not of $\tl{3}$-type.  

Finally, to show that $(\omega+\omega,\vdash)$ is not of Lindenbaum-II-type, and hence not of $\tl{2}$-type, we consider the finite ordinal 1, and note that $C_\vdash(1)=\displaystyle\bigcap\OC(1)=1$. Thus, $1\nvdash\omega$. Now, if possible, suppose $\Sigma\subseteq\omega+\omega$ be an $\omega$-saturated set containing 1. So, $\Sigma\nvdash\omega$, i.e. $\omega\notin C_\vdash(\Sigma)$. Since $C_\vdash(\Sigma)$ is an ordinal in $\omega+\omega$, as noted in Observation (b) above, this implies, by Theorem~\ref{thm:ord_properties(I)}\ref{thm:ord_trichotomy}, that either $C_\vdash(\Sigma)\in \omega$ or $C_\vdash(\Sigma)=\omega$. Since $C_\vdash(\Sigma)\neq\omega$, by Observation (c) above, $C_\vdash(\Sigma)\in\omega$. This implies that $C_\vdash(\Sigma)=n$ for some $n\in\omega$. Then, by reflexivity, $\Sigma\subseteq C_\vdash(\Sigma)=n$. So, $\Sigma\cup\{n\}\subseteq n\cup\{n\}=n+1$. Hence, by monotonicity, $C_\vdash(\Sigma\cup\{n\})\subseteq C_\vdash(n+1)=n+1$. Clearly, $\omega\notin C_\vdash(\Sigma\cup\{n\})$, i.e. $\Sigma\cup\{n\}\nvdash\omega$. This contradicts the assumption that $\Sigma$ is $\omega$-saturated. Thus, there is no $\omega$-saturated set containing 1, even though $1\nvdash\omega$. Thus $(\omega+\omega,\vdash)$ is not of Lindenbaum-II-type, and hence not of $\tl{2}$-type.
\end{example}

The next example uses a similar logical structure as in the above one; the only difference is in the definition of $\vdash$. We remove the separate cases now, and define $C_\vdash(\Gamma)$ in a uniform way for all $\Gamma\subseteq\omega+\omega$.

\begin{example}[$\tl{2}\centernot\implies\tl{3}$]\label{exm:TL2notTL3}
As in the above example, we consider a logical structure $(\omega+\omega,\vdash)$, where $\vdash\,\subseteq\mathcal{P}(\omega+\omega)\times(\omega+\omega)$ is such that, for any $\Gamma\subseteq\omega+\omega$,
\[
C_{\vdash}(\Gamma)=\displaystyle\bigcap\{\beta\in\omega+\omega\mid\,\Gamma\subseteq \beta\}=\displaystyle\bigcap\OC(\Gamma).
\]
It is fairly straightforward to see that $(\omega+\omega,\vdash)$ is of Tarski-type, but not of Lindenbaum-III-type, and hence not of $\tl{3}$-type, due to the same reasons as described in the previous example. We claim that $(\omega+\omega,\vdash)$ is, however, of Lindenbaum-II-type. 

To prove this, let $\Gamma\cup\{\alpha\}\subseteq  \omega+\omega$ be such that $\Gamma\nvdash\alpha$. Now, since $\alpha\in \omega+\omega$, it is an ordinal, and hence $\alpha\in\OC(\alpha)$. Thus, as in the previous example, $\alpha\subseteq C_\vdash(\alpha)=\displaystyle\bigcap\OC(\alpha)\subseteq\alpha$. Thus, $C_\vdash(\alpha)=\alpha$, and since $\alpha\notin\alpha$, $\alpha\notin C_\vdash(\alpha)$, i.e. $\alpha\nvdash\alpha$. We claim that $\alpha$ is, in fact, $\alpha$-saturated.

Suppose the contrary. Then there exists $\delta\in (\omega+\omega)\setminus\alpha$ such that $\alpha\cup\{\delta\}\nvdash\alpha$, i.e. $\alpha\notin C_\vdash(\alpha\cup\{\delta\})=\displaystyle\bigcap\OC(\alpha\cup\{\delta\})$. So, there exists $\beta\in\omega+\omega$ such that $\alpha\cup\{\delta\}\subseteq\beta$ but $\alpha\notin\beta$. Now, $\alpha\notin\beta$ implies, by Theorem \ref{thm:ord_properties(I)}\ref{thm:ord_trichotomy}, that either 
$\beta\in\alpha$ or $\beta=\alpha$, i.e. $\beta\subseteq\alpha$, by Theorem \ref{thm:ord_properties(I)}\ref{thm:ord_subset_belong}. Thus, we have $\alpha\cup\{\delta\}\subseteq\alpha$, which implies that $\delta\in\alpha$. This is a contradiction. Hence $\alpha$ is $\alpha$-saturated.

Thus, for any $\Gamma\cup\{\alpha\}\subseteq\omega+\omega$ with $\Gamma\nvdash\alpha$, there exists an $\alpha$-saturated extension of $\Gamma$, viz., $\alpha$. Hence $(\omega+\omega,\vdash)$ is of Lindenbaum-II-type, and consequently of $\tl{2}$-type.
\end{example}

\begin{example}[$\tl{3}\centernot\implies\tl{2}$]\label{exm:TL3notTL2}
We consider $(\mathbb{Z}^+,\vdash)$ as a logical structure, where $\mathbb{Z}^+$ is the set of positive integers, and $\vdash\,\subseteq\mathcal{P}(\mathbb{Z}^+)\times\mathbb{Z}^+$ is such that, for any $\Gamma\subseteq\mathbb{Z}^+$,
\[
C_{\vdash}(\Gamma)=\begin{cases}
\Gamma,&\hbox{if}~\Gamma\subseteq p\mathbb{Z}^+,~\hbox{for some prime}~p,\\
&\hbox{and is finite},\\
\displaystyle\bigcap\{q\mathbb{Z}^+\mid\,\Gamma\subseteq q\mathbb{Z}^+,\,q\hbox{ prime}\},&\hbox{if}~\Gamma\subseteq p\mathbb{Z}^+,~\hbox{for some prime}~p,\\
&\hbox{and is infinite},
\\\mathbb{Z}^+,&\hbox{otherwise}.
\end{cases}
\]
We first note that, for any $\Gamma\subseteq \mathbb{Z}^+$, $\Gamma\subseteq C_\vdash(\Gamma)$.
Now, to show that $(\mathbb{Z}^+,\vdash)$ is of Tarski-type, let $\Sigma,\Delta\subseteq \mathbb{Z}^+$ such that $C_\vdash(\Sigma)\subseteq C_\vdash(\Delta)$. Then, $\Sigma\subseteq C_\vdash(\Sigma)\subseteq C_\vdash(\Delta)$.

Conversely, suppose $\Sigma,\Delta\subseteq \mathbb{Z}^+$ such that $\Sigma\subseteq C_\vdash(\Delta)$. Then the following cases arise.

\textsc{Case 1:} $C_\vdash(\Delta)=\Delta$

This implies that $\Delta$ must be finite and $\Delta\subseteq p\mathbb{Z}^+$, for some prime $p$. Now, since $\Sigma\subseteq C_\vdash(\Delta)=\Delta$, it follows that $\Sigma$ is finite and $\Sigma\subseteq p\mathbb{Z}^+$ as well. Thus $C_\vdash(\Sigma)=\Sigma\subseteq C_\vdash(\Delta)$.

\textsc{Case 2:} $C_\vdash(\Delta)=\displaystyle\bigcap\{q\mathbb{Z}^+\mid\,\Delta\subseteq q\mathbb{Z}^+,\,q\hbox{ prime}\}$

Then it follows that $\Delta\subseteq p\mathbb{Z}^+$, for some prime $p$, and is infinite. So, $C_\vdash(\Delta)\subseteq p\mathbb{Z}^+$, and hence $\Sigma\subseteq C_\vdash(\Delta)\subseteq p\mathbb{Z}^+$. 

Now, if $\Sigma$ is finite, then $C_\vdash(\Sigma)=\Sigma\subseteq C_\vdash(\Delta)$.

If, on the other hand, $\Sigma$ is infinite, $C_\vdash(\Sigma)=\displaystyle\bigcap\{q\mathbb{Z}^+\mid\,\Sigma\subseteq q\mathbb{Z}^+,\,q\hbox{ prime}\}$. Then, since $\Sigma\subseteq C_\vdash(\Delta)$, $\Sigma\subseteq q\mathbb{Z}^+$, for every prime $q$ such that $\Delta\subseteq q\mathbb{Z}^+$. This implies that 
\[
\begin{array}{lcl}
C_\vdash(\Sigma)&=&\displaystyle\bigcap\{q\mathbb{Z}^+\mid\,\Sigma\subseteq q\mathbb{Z}^+,\,q\hbox{ prime}\}\\
&\subseteq&\displaystyle\bigcap\{q\mathbb{Z}^+\mid\,\Delta\subseteq q\mathbb{Z}^+,\,q\hbox{ prime}\}\\
&=&C_\vdash(\Delta).
\end{array}
\]

\textsc{Case 3:} $C_\vdash(\Delta)=\mathbb{Z}^+$

In this case, it immediately follows that $C_\vdash(\Sigma)\subseteq C_\vdash(\Delta)$.

Thus, in all cases, $\Sigma\subseteq C_\vdash(\Delta)$ implies that $C_\vdash(\Sigma)\subseteq C_\vdash(\Delta)$. Hence, for any $\Sigma,\Delta\subseteq\mathbb{Z}^+$, $\Sigma\subseteq C_\vdash(\Delta)$ iff $C_\vdash(\Sigma)\subseteq C_\vdash(\Delta)$, and so, by Theorem~\ref{thm:Char_Tar}, $(\mathbb{Z}^+,\vdash)$ is of Tarski-type.

We next show that $(\mathbb{Z}^+,\vdash)$ is of Lindenbaum-III-type. Suppose $\Sigma\subseteq\mathbb{Z}^+$ is nontrivial. Then there must exist a prime $p$ such that $\Sigma\subseteq p\mathbb{Z}^+$. We claim that $p\mathbb{Z}^+$ is maximal nontrivial. 

Suppose the contrary. Let $\Delta\supsetneq p\mathbb{Z}^+$ be a nontrivial subset of $\mathbb{Z}^+$. This implies that there exists a prime $p^\prime$ such that $\Delta\subseteq p^\prime\mathbb{Z}^+$. So, we have $p\mathbb{Z}^+\subsetneq\Delta\subseteq p^\prime\mathbb{Z}^+$. Then $p^\prime\mid p$. Since $p,p^\prime$ are both prime, this implies that $p=p^\prime$, and hence $p\mathbb{Z}^+=p^\prime\mathbb{Z}^+$. This is, however, a contradiction. Hence $p\mathbb{Z}^+$ is maximal nontrivial. Thus every nontrivial $\Sigma\subseteq\mathbb{Z}^+$ has a maximal nontrivial extension. So, $(\mathbb{Z}^+,\vdash)$ is of Lindenbaum-III-type, and hence of $\tl{3}$-type.

Lastly, to show that $(\mathbb{Z}^+,\vdash)$ is not of Lindenbaum-II-type, and hence not of $\tl{2}$-type, consider the set $\{2\}\subseteq2\mathbb{Z}^+\subseteq\mathbb{Z}^+$. Then as $\{2\}$ is finite, $C_\vdash(\{2\})=\{2\}$, and so, in particular, $\{2\}\nvdash 4$. If possible, suppose there exists a 4-saturated $\Delta\supseteq\{2\}$. 

Then $\Delta\nvdash4$, and hence is nontrivial. So, there must exist a prime $q$ such that $\Delta\subseteq q\mathbb{Z}^+$. Since $2\in\Delta$, $q$ must be 2, i.e. $\Delta\subseteq2\mathbb{Z}^+$. Moreover, we claim that $\Delta$ is infinite. Suppose the contrary. Then $C_\vdash(\Delta)=\Delta$, and so, $4\notin \Delta$. Now, let $k\in\mathbb{Z}^+$ such that $k>2$ and $2k\in\mathbb{Z}^+\setminus\Delta$ (clearly, such a $k$ exists as $\Delta$ is finite). Then $\Delta\cup\{2k\}\subseteq 2\mathbb{Z}^+$ is finite, and so, $C_\vdash(\Delta\cup\{2k\})=\Delta\cup\{2k\}$. Since $2k\neq4$ and $4\notin\Delta$, this implies that $\Delta\cup\{2k\}\nvdash 4$. This, however, contradicts the assumption that $\Delta$ is $4$-saturated. So, $\Delta$ must be infinite. Then, $C_\vdash(\Delta)=\displaystyle\bigcap\{q\mathbb{Z}^+\mid\,\Delta\subseteq q\mathbb{Z}^+,\,q\hbox{ prime}\}$. Now, since $2\in\Delta$, we must have $2\in q\mathbb{Z}^+$ for every prime $q$ such that $\Delta\subseteq q\mathbb{Z}^+$, which implies that $q=2$. Thus $C_\vdash(\Delta)=\displaystyle\bigcap\{q\mathbb{Z}^+\mid\,\Delta\subseteq q\mathbb{Z}^+,\,q\hbox{ prime}\}=2\mathbb{Z}^+$. However, this implies that $\Delta\vdash 4$, a contradiction. Hence $\{2\}\nvdash4$ but $\{2\}$ is not contained in a $4$-saturated set. Thus $(\mathbb{Z}^+,\vdash)$ is not of Lindenbaum-II-type, and hence not of $\tl{2}$-type.
\end{example}

\begin{corollary}
    We can now use the results obtained thus far to conclude the following.
    \begin{enumerate}[label=(\arabic*)]
        \item Not every $\tl{1}$-type structure is of $\tl{4}$-type.
        \item Not every $\tl{3}$-type structure is of $\tl{4}$-type.
        \item Not every $\tl{4}$-type structure is of $\tl{3}$-type.
    \end{enumerate}
\end{corollary}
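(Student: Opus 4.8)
The plan is to leverage the equivalence between $\tl{2}$- and $\tl{4}$-type structures established in Theorem \ref{thm:TarLind}(2). This single fact lets me translate every assertion about the $\tl{4}$-class into an assertion about the $\tl{2}$-class, after which each of the three non-implications will already have been realized by one of the examples constructed above. So the strategy is not to build anything new, but to route each statement through the $\tl{2}\leftrightarrow\tl{4}$ identification and then cite the appropriate witness.

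Concretely, for statement (1) I would note that, since being of $\tl{4}$-type is the same as being of $\tl{2}$-type, it suffices to find a structure that is of $\tl{1}$-type but not of $\tl{2}$-type; Example \ref{exm:TL1notTL2/TL3} supplies exactly such a structure on $\omega+\omega$, where it is shown to be of $\tl{1}$-type while failing to be of $\tl{2}$-type. For statement (2), the same identification reduces the claim to exhibiting a $\tl{3}$-type structure that is not of $\tl{2}$-type, which is precisely the content of Example \ref{exm:TL3notTL2}. For statement (3), the identification reduces it to finding a $\tl{2}$-type structure that is not of $\tl{3}$-type, and this is Example \ref{exm:TL2notTL3}.

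I expect no genuine obstacle here: the substantive work lies entirely in the earlier examples and in Theorem \ref{thm:TarLind}(2), both of which I am free to assume. The only point that demands care is the bookkeeping -- applying the $\tl{2}\leftrightarrow\tl{4}$ equivalence in the correct direction for each item, and pairing each statement with the example that separates precisely the two relevant classes rather than a neighbouring pair. Once that matching is fixed, each of the three parts is a one-line deduction.
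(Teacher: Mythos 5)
Your proposal is correct and matches the paper's own argument: both rest on the $\tl{2}\leftrightarrow\tl{4}$ equivalence from Theorem \ref{thm:TarLind}(2) and cite Examples \ref{exm:TL1notTL2/TL3}, \ref{exm:TL3notTL2}, and \ref{exm:TL2notTL3} with exactly the same pairing. The only difference is presentational -- the paper phrases each item as a proof by contradiction, whereas you apply the equivalence directly -- which is immaterial.
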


\begin{proof}
    \begin{enumerate}[label=(\arabic*)]
        \item Suppose the contrary. By Theorem \ref{thm:TarLind}, every $\tl{4}$-type logical structure is of $\tl{2}$-type. Thus every $\tl{1}$-type logical structure is of $\tl{2}$-type. This, however, is not the case as shown by Example \ref{exm:TL1notTL2/TL3}. Hence, not every $\tl{1}$-type structure is of $\tl{4}$-type.
        \item Suppose the contrary. Again, by Theorem \ref{thm:TarLind}, every $\tl{4}$-type logical structure is of $\tl{2}$-type. Thus every $\tl{3}$-type logical structure is of $\tl{2}$-type. This, however, is not the case as shown by Example \ref{exm:TL3notTL2}. Hence, not every $\tl{3}$-type structure is of $\tl{4}$-type.
        \item Suppose the contrary. Again, by Theorem \ref{thm:TarLind}, every $\tl{2}$-type logical structure is of $\tl{4}$-type. Thus every $\tl{2}$-type logical structure is of $\tl{3}$-type. This, however, is not the case as shown by Example \ref{exm:TL2notTL3}. Hence, not every $\tl{4}$-type structure is of $\tl{3}$-type.
    \end{enumerate}
\end{proof}

The results in this section have been summarized in Figure~\ref{fig:y}.

\begin{figure}[ht]
\centering
\begin{tikzcd}
                              &                                           & \tl{1}              \\
                              &                                           &                     \\
                              & \tl{2} \arrow[ruu] \arrow[ldd,<->] &                     \\
                              &                                           &                     \\
\tl{4}  &                                           & \tl{3} \arrow[uuuu]
\end{tikzcd}
\caption{Comparison between Tarski-Lindenbaum-type logical structures}
\label{fig:y}
\end{figure}
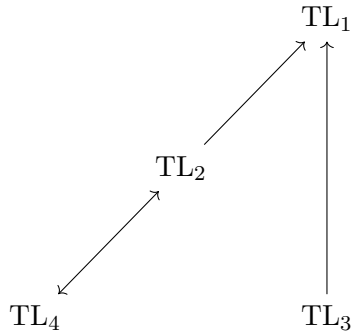

\section{\texorpdfstring{$\tl{4}$}{TL-4}-Logical Structures}\label{sec:TL-4}
We devote this section to the study of $\tl{4}$-type logical structures. Since a logical structure is of $\tl{4}$-type iff it is of $\tl{2}$-type, by Theorem~\ref{thm:MonLind=>(T2Lind<=>T4Lind)}, the results of this section apply to $\tl{2}$-type logical structures as well.

\begin{theorem}{\label{thm:Tar=>mcut}}
Let $(\mathscr{L},\vdash)$ be a Tarski-type logical structure. Then it satisfies mixed-cut.
\end{theorem}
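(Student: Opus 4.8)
The plan is to work entirely in the $C_\vdash$ formulation of Tarski-type, where the three conditions read: reflexivity ($\Gamma\subseteq C_\vdash(\Gamma)$), monotonicity ($\Gamma\subseteq\Sigma$ implies $C_\vdash(\Gamma)\subseteq C_\vdash(\Sigma)$), and transitivity ($\Sigma\subseteq C_\vdash(\Gamma)$ implies $C_\vdash(\Sigma)\subseteq C_\vdash(\Gamma)$). Assume $\Gamma\vdash\alpha$ and $\Sigma\cup\{\alpha\}\vdash\beta$; in closure terms this says $\alpha\in C_\vdash(\Gamma)$ and $\beta\in C_\vdash(\Sigma\cup\{\alpha\})$. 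The goal is to show $\beta\in C_\vdash(\Gamma\cup\Sigma)$, i.e.\ $\Gamma\cup\Sigma\vdash\beta$.

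The key step is to establish the single inclusion
\[
\Sigma\cup\{\alpha\}\subseteq C_\vdash(\Gamma\cup\Sigma),
\]
after which transitivity finishes the argument. To get this, I would handle the two pieces separately. For $\Sigma$, reflexivity gives $\Sigma\subseteq\Gamma\cup\Sigma\subseteq C_\vdash(\Gamma\cup\Sigma)$. For $\alpha$, I would use monotonicity applied to $\Gamma\subseteq\Gamma\cup\Sigma$, which yields $C_\vdash(\Gamma)\subseteq C_\vdash(\Gamma\cup\Sigma)$; since $\alpha\in C_\vdash(\Gamma)$ by hypothesis, this places $\alpha\in C_\vdash(\Gamma\cup\Sigma)$. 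Combining the two gives the displayed inclusion.

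With $\Sigma\cup\{\alpha\}\subseteq C_\vdash(\Gamma\cup\Sigma)$ in hand, transitivity (taking the roles $\Sigma\cup\{\alpha\}$ and $\Gamma\cup\Sigma$) gives $C_\vdash(\Sigma\cup\{\alpha\})\subseteq C_\vdash(\Gamma\cup\Sigma)$. Since $\beta\in C_\vdash(\Sigma\cup\{\alpha\})$ by the second hypothesis, we conclude $\beta\in C_\vdash(\Gamma\cup\Sigma)$, that is, $\Gamma\cup\Sigma\vdash\beta$, which is exactly mixed-cut.

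I do not expect any genuine obstacle here: the result is a direct bookkeeping exercise in the three defining properties. The only point requiring a little care is getting the direction of transitivity right and remembering to route $\alpha$ into $C_\vdash(\Gamma\cup\Sigma)$ via monotonicity (not merely reflexivity, since $\alpha$ need not lie in $\Gamma\cup\Sigma$ itself). Everything else is immediate from reflexivity and the monotone enlargement of the premise set.
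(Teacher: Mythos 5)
Your proof is correct: the decomposition $\Sigma\cup\{\alpha\}\subseteq C_\vdash(\Gamma\cup\Sigma)$ (via reflexivity for $\Sigma$ and monotonicity for $\alpha$) followed by transitivity is exactly the routine verification the paper has in mind when it says the result ``follows straightforwardly from the definition of Tarski-type logical structures.'' You have simply written out the details that the paper omits, so there is nothing further to fix.
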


\begin{proof}
Follows straightforwardly from the definition of Tarski-type logical structures. 
\end{proof}

\begin{theorem}[\textsc{Characterization of $\tl{4}$}]\label{thm:Char_TL4}
Let $(\mathscr{L},\vdash)$ be a logical structure. Then the following statements are equivalent.

\begin{enumerate}[label=(\arabic*)]
\item $(\mathscr{L},\vdash)$ is of $\tl{4}$-type.
\item For all $\Gamma\cup\{\alpha\}\subseteq\mathscr{L}$ with $\Gamma\nvdash\alpha$, there exists a strongly closed $\Sigma\supseteq\Gamma$ which is relatively maximal in $\alpha$.
\item For all $\Gamma\cup\{\alpha\}\subseteq\mathscr{L}$ with $\Gamma\nvdash\alpha$, there exists a strongly closed $\alpha$-saturated $\Sigma\supseteq\Gamma$.
\item For all $\Gamma\cup\{\alpha\}\subseteq\mathscr{L}$ with $\Gamma\nvdash\alpha$, there exists $\beta\in\mathscr{L}$ and a strongly closed $\beta$-saturated $\Sigma\supseteq\Gamma$ such that $\Sigma\nvdash\alpha$ and $\{\beta\}\vdash\alpha$.
\item For each $\alpha\in\mathscr{L}$, there exists $\beta\in\mathscr{L}$ such that, for any $\Gamma\subseteq\mathscr{L}$, if $\Gamma\nvdash\alpha$ then there exists a strongly closed $\beta$-saturated $\Sigma\supseteq\Gamma$ with $\Sigma\nvdash\alpha$.
\end{enumerate}
\end{theorem}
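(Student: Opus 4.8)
The plan is to exploit one unifying observation. Each of statements (2)--(5) guarantees, for every $\Gamma\cup\{\alpha\}\subseteq\mathscr{L}$ with $\Gamma\nvdash\alpha$, a strongly closed extension $\Sigma\supseteq\Gamma$ with $\Sigma\nvdash\alpha$: ``relatively maximal in $\alpha$'' and ``$\alpha$-saturated'' each entail $\Sigma\nvdash\alpha$, while (4) and (5) assert $\Sigma\nvdash\alpha$ outright. Hence, by the implication (3)$\implies$(1) of Theorem~\ref{thm:Char_Tar}, each of (2)--(5) already forces $(\mathscr{L},\vdash)$ to be of Tarski-type; in particular it is then reflexive, monotone and transitive, and by Theorem~\ref{thm:Tar=>mcut} it satisfies mixed-cut, so that every saturated set is strongly closed by Theorem~\ref{thm:mcut=>str-closed_sat}. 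Since (1) presupposes Tarski-type as well, I may treat reflexivity, monotonicity, transitivity and mixed-cut as available under every one of the five hypotheses. I would then establish the cycle (1)$\implies$(2)$\implies$(3)$\implies$(4)$\implies$(1) together with (3)$\implies$(5)$\implies$(1), which renders the implication graph strongly connected.

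The routine implications come first. For (1)$\implies$(2), Lindenbaum-IV-type supplies a $\Sigma\supseteq\Gamma$ relatively maximal in $\alpha$; this $\Sigma$ is $\alpha$-saturated by Remark~\ref{rem:alpha-sat}, hence saturated, and since mixed-cut holds it is strongly closed by Theorem~\ref{thm:mcut=>str-closed_sat}. The implication (2)$\implies$(3) is immediate from Remark~\ref{rem:alpha-sat}, as a set relatively maximal in $\alpha$ is $\alpha$-saturated. For both (3)$\implies$(4) and (3)$\implies$(5) I would simply take $\beta=\alpha$: reflexivity yields $\{\alpha\}\vdash\alpha$, and an $\alpha$-saturated set $\Sigma$ satisfies $\Sigma\nvdash\alpha$, so the strongly closed $\alpha$-saturated extension furnished by (3) witnesses (4) with $\beta=\alpha$, and likewise witnesses (5) with the uniform choice $\beta=\alpha$ (which does not depend on $\Gamma$).

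The substance lies in the two implications returning to (1), each of which must recover Lindenbaum-IV-type. For (4)$\implies$(1), given the strongly closed $\beta$-saturated $\Sigma\supseteq\Gamma$ with $\Sigma\nvdash\alpha$ and $\{\beta\}\vdash\alpha$, I would show $\Sigma$ is relatively maximal in $\alpha$: for any $\Sigma'\supsetneq\Sigma$ pick $\gamma\in\Sigma'\setminus\Sigma$, use $\beta$-saturation to get $\Sigma\cup\{\gamma\}\vdash\beta$, then monotonicity to get $\Sigma'\vdash\beta$, and finally transitivity against $\{\beta\}\vdash\alpha$ to conclude $\Sigma'\vdash\alpha$; combined with Tarski-type this gives (1). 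The delicate case, and the main obstacle, is (5)$\implies$(1): here the hypothesis offers no link $\{\beta\}\vdash\alpha$, so I must instead exploit that the $\beta$ in (5) depends only on $\alpha$ and is therefore common to all extensions. Concretely, letting $\Sigma$ be the set from (5) applied to $(\Gamma,\alpha)$, I would suppose toward a contradiction that some $\Sigma'\supsetneq\Sigma$ has $\Sigma'\nvdash\alpha$; choosing $\gamma\in\Sigma'\setminus\Sigma$ gives $\Sigma\cup\{\gamma\}\vdash\beta$ and hence $\Sigma'\vdash\beta$ by monotonicity. Applying (5) to $\Sigma'$ (legitimate since $\Sigma'\nvdash\alpha$) yields a strongly closed $\beta$-saturated $\Sigma''\supseteq\Sigma'$ with the \emph{same} $\beta$; monotonicity then gives $\Sigma''\vdash\beta$, contradicting $\Sigma''\nvdash\beta$. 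Thus $\Sigma$ is relatively maximal in $\alpha$, and with Tarski-type we obtain $\tl{4}$-type.

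I expect the crux to be precisely this last step: the uniformity of $\beta$ over all $\Gamma$ for fixed $\alpha$ is exactly what lets the two $\beta$-saturated sets $\Sigma$ and $\Sigma''$ collide, so the argument would fail if (5) only promised a $\beta$ varying with $\Gamma$. Verifying that this uniformity is genuinely what drives the contradiction — and checking that no weaker formulation suffices — is where I would focus the care, the remaining manipulations being direct appeals to reflexivity, monotonicity, transitivity and the definition of $\beta$-saturation.
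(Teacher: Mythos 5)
Your proposal is correct and follows essentially the same route as the paper's proof: the same key ingredients (Theorem~\ref{thm:Char_Tar} to get Tarski-type from the existence of strongly closed extensions avoiding $\alpha$, Theorems~\ref{thm:Tar=>mcut} and~\ref{thm:mcut=>str-closed_sat} for (1)$\implies$(2), Remark~\ref{rem:alpha-sat} for (2)$\implies$(3), the witness $\beta=\alpha$ for (3)$\implies$(4)/(5), and, crucially, the uniformity of $\beta$ in $\alpha$ to make the two $\beta$-saturated extensions collide in (5)$\implies$(1)). The only differences are organizational and immaterial: you establish Tarski-type once upfront for all of (2)--(5), close the cycle with (4)$\implies$(1) directly rather than the paper's (4)$\implies$(3), and use monotonicity where the paper invokes strong closedness of the second extension.
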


\begin{proof}
We will use the following scheme to show that the above statements are equivalent: 
\[
(1)\implies(2)\implies(3)\implies(4)\implies(3)\implies(5)\implies(1).
\]
\underline{(1)$\implies$(2)}: Let $\Gamma\cup\{\alpha\}\subseteq\mathscr{L}$ be such that $\Gamma\nvdash\alpha$. Since $(\mathscr{L},\vdash)$ is of Lindenbaum-IV-type, there exists $\Sigma\supseteq\Gamma$ which is relatively maximal in $\alpha$. Now, by Remark \ref{rem:alpha-sat}, $\Sigma$ is also $\alpha$-saturated, and hence saturated. Since $(\mathscr{L},\vdash)$ is of Tarski-type as well, it satisfies mixed-cut, by Theorem~\ref{thm:Tar=>mcut}. Then, by Theorem~\ref{thm:mcut=>str-closed_sat}, $\Sigma$ is strongly closed. Thus $\Sigma$ is a strongly closed extension of $\Gamma$ that is relatively maximal in $\alpha$.

\underline{(2)$\implies$(3)}: This follows immediately, by Remark \ref{rem:alpha-sat}.

\underline{(3)$\implies$(4)}: Suppose $\Gamma\cup\{\alpha\}\subseteq\mathscr{L}$ such that $\Gamma\nvdash\alpha$. Then, by statement (3), there exists a strongly closed $\alpha$-saturated $\Sigma\supseteq\Gamma$. Since $\Sigma$ is $\alpha$-saturated, $\Sigma\nvdash\alpha$. Thus, by Theorem \ref{thm:Char_Tar}, $(\mathscr{L},\vdash)$ is of Tarski-type, and hence reflexive. So, $\{\alpha\}\vdash\alpha$. The statement (4) now follows with $\beta=\alpha$.

\underline{(4)$\implies$(3)}:
Suppose $\Gamma\cup\{\alpha\}\subseteq\mathscr{L}$ such that $\Gamma\nvdash\alpha$. Then, by statement (4), there exists $\beta\in\mathscr{L}$ and a strongly closed $\beta$-saturated $\Sigma\supseteq\Gamma$ such that $\Sigma\nvdash\alpha$ and $\{\beta\}\vdash\alpha$. Thus, in particular, $\Sigma$ is a strongly closed extension of $\Gamma$ such that $\Sigma\nvdash\alpha$. So, by Theorem \ref{thm:Char_Tar}, $(\mathscr{L},\vdash)$ is of Tarski-type. Now, let $\gamma\in\mathscr{L}\setminus\Sigma$. Since $\Sigma$ is $\beta$-saturated, $\Sigma\cup\{\gamma\}\vdash\beta$. Then, as $\{\beta\}\vdash\alpha$, we have, by transitivity, $\Sigma\cup\{\gamma\}\vdash\alpha$. Thus $\Sigma$ is $\alpha$-saturated as well.

\underline{(3)$\implies$(5)}: The statement (5) follows from (3) by taking $\beta=\alpha$.

\underline{(5)$\implies$(1)}: Suppose $\Gamma\cup\{\alpha\}\subseteq\mathscr{L}$ such that $\Gamma\nvdash\alpha$. Then, by statement (5), there exists a $\beta\in\mathscr{L}$ (dependent only on $\alpha$), and a strongly closed $\beta$-saturated $\Sigma\supseteq\Gamma$ with $\Sigma\nvdash\alpha$. Thus, in particular, $\Sigma$ is a strongly closed extension of $\Gamma$ such that $\Sigma\nvdash\alpha$. So, by Theorem \ref{thm:Char_Tar}, $(\mathscr{L},\vdash)$ is of Tarski-type.

Next, to show that $(\mathscr{L},\vdash)$ is of Lindenbaum-IV-type as well, let $\Gamma\cup\{\alpha\}\subseteq\mathscr{L}$ such that $\Gamma\nvdash\alpha$. So, by the assumed condition, there exists a $\beta\in\mathscr{L}$ (dependent only on $\alpha$), and a strongly closed $\beta$-saturated $\Sigma\supseteq\Gamma$ with $\Sigma\nvdash\alpha$. If possible, suppose $\Sigma$ is not relatively maximal in $\alpha$. Then there exists a $\Delta\supsetneq\Sigma$ such that $\Delta\nvdash\alpha$. So, again by the assumed condition, there exists a strongly closed $\beta$-saturated $\Lambda\supseteq\Delta$ with $\Lambda\nvdash\alpha$. We note that $\Sigma\subsetneq\Delta\subseteq\Lambda$. So, there exists a $\gamma\in\Lambda\setminus\Sigma$. Now, since $\Sigma$ is $\beta$-saturated, $\Sigma\cup\{\gamma\}\vdash\beta$. Then, as $\Lambda$ is strongly closed and $\Sigma\cup\{\gamma\}\subseteq\Lambda$,  $\Lambda\vdash\beta$. This, however, contradicts that $\Lambda$ is $\beta$-saturated. Thus $\Sigma$ must be relatively maximal in $\alpha$. Hence $(\mathscr{L},\vdash)$ is of Lindenbaum-IV-type, and consequently of $\tl{4}$-type.
\end{proof}

\begin{lemma}\label{lem:LS_soundness}
Let $(\mathscr{L},\vdash)$ be a logical structure and $\mathbb{SCS}$ (strongly closed and saturated) be a set of bivaluations defined as follows.
\[
\mathbb{SCS}=\displaystyle\bigcup_{\beta\in \mathscr{L}}\{\chi_\Sigma\in\{0,1\}^\mathscr{L}:\Sigma\hbox{ is strongly closed and $\beta$-saturated}\},
\]
where $\chi_{\Sigma}$ is the characteristic function of $\Sigma$. Then, for all $\Gamma\cup\{\alpha\}\subseteq\mathscr{L}$, $\Gamma\vdash\alpha$ implies that $\Gamma\vdash_{\mathbb{SCS}}\alpha$.
\end{lemma}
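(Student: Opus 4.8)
The plan is to unwind the definition of $\vdash_{\mathbb{SCS}}$ and reduce the claim to the second defining clause of a strongly closed set. Recall that $\Gamma\vdash_{\mathbb{SCS}}\alpha$ means that every bivaluation $v\in\mathbb{SCS}$ that satisfies $\Gamma$ also has $v(\alpha)=1$. By construction, every element of $\mathbb{SCS}$ has the form $\chi_\Sigma$ for some $\Sigma$ that is strongly closed and $\beta$-saturated for some $\beta\in\mathscr{L}$. Using the identification of bivaluations with characteristic functions, $\chi_\Sigma$ satisfies $\Gamma$ precisely when $\Gamma\subseteq\Sigma$, while $\chi_\Sigma(\alpha)=1$ precisely when $\alpha\in\Sigma$. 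Hence the goal reduces to the purely set-theoretic statement: whenever $\Gamma\vdash\alpha$ and $\Sigma$ is strongly closed with $\Gamma\subseteq\Sigma$, then $\alpha\in\Sigma$.

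Carrying this out, I would fix $\Gamma\cup\{\alpha\}\subseteq\mathscr{L}$ with $\Gamma\vdash\alpha$ and take an arbitrary $v=\chi_\Sigma\in\mathbb{SCS}$, so that $\Sigma$ is strongly closed (and $\beta$-saturated for some $\beta$). Assuming $v$ satisfies $\Gamma$, i.e.\ $\Gamma\subseteq\Sigma$, I would invoke condition (ii) of Definition \ref{def:str_closed} with the choice $\Gamma'=\Gamma$: since $\Gamma\subseteq\Sigma$ and $\Gamma\vdash\alpha$, strong closure forces $\alpha\in\Sigma$, that is, $v(\alpha)=\chi_\Sigma(\alpha)=1$. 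As $v$ was an arbitrary element of $\mathbb{SCS}$, this establishes $\Gamma\vdash_{\mathbb{SCS}}\alpha$, which is exactly the desired soundness inclusion $\vdash\,\subseteq\,\vdash_{\mathbb{SCS}}$.

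It is worth observing that the $\beta$-saturation of each $\Sigma$ plays no role in this direction; only the strong closure of each $\Sigma$ is used, since the key step is a direct application of the second clause of Definition \ref{def:str_closed}. (The saturation component in the definition of $\mathbb{SCS}$ is presumably what will be needed for a converse, completeness-type direction.) Accordingly, I do not anticipate any genuine obstacle: the only substantive work is the translation between the bivaluation-satisfaction language of Definition \ref{def:satisfy} and the set-containment language of Definition \ref{def:str_closed}, after which the lemma is essentially a restatement of what it means for each $\Sigma$ to be strongly closed.
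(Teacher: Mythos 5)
Your proposal is correct and is essentially the paper's own argument: the paper merely wraps the same reasoning in a proof by contradiction (assuming $\Gamma\nvdash_{\mathbb{SCS}}\alpha$ and extracting a $v=\chi_\Sigma$ violating it), whereas you argue directly, but both hinge on the identical key step of translating satisfaction into the containment $\Gamma\subseteq\Sigma$ and then applying clause (ii) of Definition \ref{def:str_closed} with $\Gamma'=\Gamma$ to get $\alpha\in\Sigma$. Your side remark that $\beta$-saturation is not used in this direction is also consistent with the paper's proof, which likewise invokes only strong closure.
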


\begin{proof}
Suppose $\Gamma\cup\{\alpha\}\subseteq\mathscr{L}$ such that $\Gamma\vdash\alpha$. If possible, let $\Gamma\nvdash_{\mathbb{SCS}}\alpha$. This implies that there exists a $v\in \mathbb{SCS}$ such that $v(\Gamma)=\{1\}$ but $v(\alpha)=0$. Then, by the definition of $\mathbb{SCS}$, there exists $\Sigma\cup\{\beta\}\subseteq\mathscr{L}$ such that $\Sigma$ is a strongly closed $\beta$-saturated set and $v=\chi_{\Sigma}$. Now, $\chi_{\Sigma}(\Gamma)=\{1\}$ implies that $\Gamma\subseteq \Sigma$. Then, as $\Sigma$ is strongly closed and $\Gamma\vdash\alpha$, it follows that $\alpha\in\Sigma$. Hence $\chi_{\Sigma}(\alpha)=v(\alpha)=1$. This is a contradiction. Hence $\Gamma\vdash_{\mathbb{SCS}}\alpha$.
\end{proof}

\begin{corollary}[\textsc{Soundness for $\tl{4}$}]\label{cor:TL_sound}
Let $(\mathscr{L},\vdash)$ be a $\tl{4}$-type logical structure and $\mathbb{SCS}$ be as in the above lemma. Then $\mathbb{SCS}\neq\emptyset$ (assuming that there is at least one nontrivial $\Gamma\subseteq\mathscr{L}$) and $(\mathscr{L},\vdash)$ is sound with respect to $(\mathscr{L},\vdash_\mathbb{SCS})$.
\end{corollary}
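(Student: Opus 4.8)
The plan is to dispatch the two assertions separately, both of which follow quickly from results already established just above. The soundness claim, namely that $\vdash\,\subseteq\,\vdash_\mathbb{SCS}$, is in fact nothing more than a restatement of Lemma~\ref{lem:LS_soundness}: that lemma asserts precisely that for all $\Gamma\cup\{\alpha\}\subseteq\mathscr{L}$, $\Gamma\vdash\alpha$ implies $\Gamma\vdash_\mathbb{SCS}\alpha$, which by Definition~\ref{def:sound_complete} is exactly what it means for $(\mathscr{L},\vdash)$ to be sound with respect to $(\mathscr{L},\vdash_\mathbb{SCS})$. Crucially, this half requires no hypothesis on $(\mathscr{L},\vdash)$ beyond its being a logical structure, so the $\tl{4}$-type assumption plays no role here.

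For the nonemptiness of $\mathbb{SCS}$, I would first invoke the standing hypothesis that some nontrivial $\Gamma\subseteq\mathscr{L}$ exists. By Definition~\ref{def:trivial}, nontriviality of $\Gamma$ furnishes an $\alpha\in\mathscr{L}$ with $\Gamma\nvdash\alpha$. Since $(\mathscr{L},\vdash)$ is of $\tl{4}$-type, statement (3) of the Characterization Theorem~\ref{thm:Char_TL4} applies to this pair $\Gamma\cup\{\alpha\}$ and yields a strongly closed $\alpha$-saturated set $\Sigma\supseteq\Gamma$. By the very definition of $\mathbb{SCS}$ (taking $\beta=\alpha$ in the indexed union), the characteristic function $\chi_\Sigma$ lies in $\mathbb{SCS}$, whence $\mathbb{SCS}\neq\emptyset$.

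There is essentially no obstacle to overcome: the corollary is a direct harvesting of Lemma~\ref{lem:LS_soundness} together with Theorem~\ref{thm:Char_TL4}. The only point worth flagging is that the two halves rest on genuinely different parts of the accumulated machinery — the nonemptiness argument truly needs the $\tl{4}$-type hypothesis, which is what guarantees the existence of a strongly closed $\alpha$-saturated extension of a given $\alpha$-limited set, whereas soundness holds for \emph{any} logical structure and uses only the strong closure built into the definition of $\mathbb{SCS}$.
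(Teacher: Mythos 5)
Your proposal is correct and follows exactly the route the paper intends: the soundness half is an immediate restatement of Lemma~\ref{lem:LS_soundness} (valid for any logical structure), and nonemptiness of $\mathbb{SCS}$ comes from applying the $\tl{4}$-type hypothesis, via statement (3) of Theorem~\ref{thm:Char_TL4}, to a nontrivial $\Gamma$ and an $\alpha$ with $\Gamma\nvdash\alpha$ to produce a strongly closed $\alpha$-saturated extension $\Sigma$, whose characteristic function $\chi_\Sigma$ witnesses $\mathbb{SCS}\neq\emptyset$. Your observation that the $\tl{4}$-type assumption is needed only for the nonemptiness claim is also accurate and consistent with how the paper deploys the lemma.
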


\begin{lemma}[\textsc{Completeness for $\tl{4}$}]\label{lem:TL_complete}
Let $(\mathscr{L},\vdash)$ be a $\tl{4}$-type logical structure and $\mathbb{SCS}$ be as defined in Lemma \ref{lem:LS_soundness}. Then, for all $\Gamma\cup\{\alpha\}\subseteq\mathscr{L}$, $\Gamma\vdash_{\mathbb{SCS}}\alpha$ implies that $\Gamma\vdash\alpha$, i.e.\ $(\mathscr{L},\vdash)$ is complete with respect to $(\mathscr{L},\vdash_\mathbb{SCS})$.
\end{lemma}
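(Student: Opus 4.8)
The plan is to prove the contrapositive: assuming $\Gamma\nvdash\alpha$, I will exhibit a single bivaluation in $\mathbb{SCS}$ that satisfies $\Gamma$ but assigns $0$ to $\alpha$, thereby witnessing $\Gamma\nvdash_{\mathbb{SCS}}\alpha$. This is the natural completeness strategy: a failure of $\Gamma\vdash\alpha$ should be detectable by some valuation in the chosen class, and the class $\mathbb{SCS}$ was designed precisely so that the strongly closed saturated extensions produced by the $\tl{4}$ characterization live inside it.

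First I would invoke Theorem~\ref{thm:Char_TL4}, specifically the equivalence of statement (1) with statement (3). Since $(\mathscr{L},\vdash)$ is of $\tl{4}$-type and $\Gamma\nvdash\alpha$, statement (3) guarantees a strongly closed $\alpha$-saturated set $\Sigma\supseteq\Gamma$. The point of using statement (3) rather than the raw definition of Lindenbaum-IV-type is that it delivers the \emph{strongly closed} extension directly, which is exactly the property required for membership in $\mathbb{SCS}$.

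Next I would verify that $\chi_\Sigma$ has the desired behaviour. Because $\Sigma$ is strongly closed and $\alpha$-saturated (so in particular $\beta$-saturated with $\beta=\alpha$), its characteristic function $\chi_\Sigma$ belongs to $\mathbb{SCS}$ by definition. Since $\Gamma\subseteq\Sigma$, we have $\chi_\Sigma(\gamma)=1$ for every $\gamma\in\Gamma$, i.e.\ $\chi_\Sigma$ satisfies $\Gamma$. Finally, $\Sigma$ being $\alpha$-saturated gives $\Sigma\nvdash\alpha$; and since every strongly closed set is closed (Remark~\ref{rem:closed-str_closed}), this forces $\alpha\notin\Sigma$, so $\chi_\Sigma(\alpha)=0$.

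With these facts assembled, $\chi_\Sigma\in\mathbb{SCS}$ is a bivaluation satisfying $\Gamma$ with $\chi_\Sigma(\alpha)=0$, which by the definition of $\vdash_{\mathbb{SCS}}$ means $\Gamma\nvdash_{\mathbb{SCS}}\alpha$. This establishes the contrapositive and hence the lemma. I do not anticipate a genuine obstacle here; the only substantive input is the $\tl{4}$ characterization theorem, and the remainder is bookkeeping translating strong closure and $\alpha$-saturation into the two values of $\chi_\Sigma$. The one subtlety worth stating explicitly is the step $\Sigma\nvdash\alpha\Rightarrow\alpha\notin\Sigma$, which relies on $\Sigma$ being closed rather than merely strongly closed, so I would cite Remark~\ref{rem:closed-str_closed} at that point.
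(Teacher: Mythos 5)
Your proof is correct and takes essentially the same route as the paper's: both obtain a strongly closed saturated extension $\Sigma\supseteq\Gamma$ from Theorem~\ref{thm:Char_TL4} and use its characteristic function $\chi_\Sigma\in\mathbb{SCS}$ as the bivaluation that satisfies $\Gamma$ while rejecting $\alpha$. The only cosmetic differences are that you argue by contrapositive via clause (3) (an $\alpha$-saturated extension), whereas the paper argues by contradiction via clause (5) (a $\beta$-saturated extension with $\Sigma\nvdash\alpha$); the substance is identical.
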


\begin{proof}
Let $\Gamma\cup\{\alpha\}\subseteq\mathscr{L}$ such that $\Gamma\vdash_{\mathbb{SCS}}\alpha$ but $\Gamma\nvdash\alpha$. Since $(\mathscr{L},\vdash)$ is of $\tl{4}$-type, this implies, by statement (5) of Theorem \ref{thm:Char_TL4}, that there exists a $\beta\in\mathscr{L}$ (dependent only on $\alpha$) and a strongly closed $\beta$-saturated $\Sigma\supseteq\Gamma$ with $\Sigma\nvdash\alpha$. Now, as $\Sigma$ is strongly closed and $\beta$-saturated, $\chi_\Sigma\in\mathbb{SCS}$, and since $\Gamma\subseteq\Sigma$, $\chi_\Sigma(\Gamma)=\{1\}$. Then, using the assumption $\Gamma\vdash_\mathbb{SCS}\alpha$, we have $\chi_\Sigma(\alpha)=1$, which implies that $\alpha\in\Sigma$. Now, since $\Sigma$ is strongly closed, it follows that $\Sigma\vdash\alpha$. This is a contradiction. Hence, if $\Gamma\vdash_\mathbb{SCS}\alpha$ then $\Gamma\vdash\alpha$.
\end{proof}

\begin{theorem}[\textsc{Adequacy Theorem for $\tl{4}$}]\label{thm:Adeq_TL}
Let $(\mathscr{L},\vdash)$ be a logical structure. If $(\mathscr{L},\vdash)$ is of $\tl{4}$-type then $\vdash\,=\,\vdash_{\mathbb{SCS}}$.
\end{theorem}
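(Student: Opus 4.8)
The plan is to obtain this as the immediate conjunction of the soundness and completeness results already established for $\tl{4}$-type logical structures. Recall that adequacy of $(\mathscr{L},\vdash)$ with respect to $(\mathscr{L},\vdash_{\mathbb{SCS}})$ means, by Definition \ref{def:sound_complete}, precisely that $\vdash\,=\,\vdash_{\mathbb{SCS}}$, which is equivalent to the two inclusions $\vdash\,\subseteq\,\vdash_{\mathbb{SCS}}$ and $\vdash_{\mathbb{SCS}}\,\subseteq\,\vdash$ holding simultaneously. So the whole proof reduces to invoking the two halves separately.

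First I would appeal to Corollary \ref{cor:TL_sound} (Soundness for $\tl{4}$): since $(\mathscr{L},\vdash)$ is assumed to be of $\tl{4}$-type, it yields that $(\mathscr{L},\vdash)$ is sound with respect to $(\mathscr{L},\vdash_{\mathbb{SCS}})$, i.e.\ for all $\Gamma\cup\{\alpha\}\subseteq\mathscr{L}$, $\Gamma\vdash\alpha$ implies $\Gamma\vdash_{\mathbb{SCS}}\alpha$. In fact this direction does not even require the $\tl{4}$ hypothesis, as Lemma \ref{lem:LS_soundness} proves it for an arbitrary logical structure; the $\tl{4}$ assumption is only needed to guarantee $\mathbb{SCS}\neq\emptyset$ so that $\vdash_{\mathbb{SCS}}$ is well-behaved. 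This gives the inclusion $\vdash\,\subseteq\,\vdash_{\mathbb{SCS}}$.

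Next I would invoke Lemma \ref{lem:TL_complete} (Completeness for $\tl{4}$): again using that $(\mathscr{L},\vdash)$ is of $\tl{4}$-type, it yields that for all $\Gamma\cup\{\alpha\}\subseteq\mathscr{L}$, $\Gamma\vdash_{\mathbb{SCS}}\alpha$ implies $\Gamma\vdash\alpha$, that is, the reverse inclusion $\vdash_{\mathbb{SCS}}\,\subseteq\,\vdash$. Combining the two inclusions gives $\vdash\,=\,\vdash_{\mathbb{SCS}}$, which is exactly the claimed adequacy.

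There is no substantive obstacle here: all the real work has already been carried out in Lemma \ref{lem:LS_soundness}, Corollary \ref{cor:TL_sound}, and Lemma \ref{lem:TL_complete}, where the strongly closed $\beta$-saturated sets produced by statement (5) of the characterization Theorem \ref{thm:Char_TL4} were used to build the separating bivaluation. The adequacy theorem is thus a one-line synthesis, and the only point worth stating explicitly is that soundness holds for every logical structure while completeness is where the $\tl{4}$-type hypothesis is genuinely consumed.
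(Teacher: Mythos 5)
Your proposal is correct and coincides with the paper's own proof, which likewise derives the theorem immediately by combining Corollary \ref{cor:TL_sound} (soundness) with Lemma \ref{lem:TL_complete} (completeness). Your additional remark that the $\tl{4}$-hypothesis is genuinely consumed only in the completeness half, soundness holding for any logical structure by Lemma \ref{lem:LS_soundness}, is accurate and consistent with how the paper structures these results.
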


\begin{proof}
Suppose $(\mathscr{L},\vdash)$ is a $\tl{4}$-type logical structure. Then it follows that $\vdash\,=\,\vdash_\mathbb{SCS}$ from Corollary \ref{cor:TL_sound} and Lemma \ref{lem:TL_complete}.
\end{proof} 

\begin{theorem}[\textsc{Minimality Theorem for $\tl{4}$}]\label{thm:Min_TL}
Let $(\mathscr{L},\vdash)$ be a $\tl{4}$-type logical structure. Then, for any $\mathcal{B}\subseteq\mathbb{SCS}$, $\vdash\,\subseteq\,\vdash_{\mathcal{B}}$. Moreover, if $\mathcal{B}\subsetneq\mathbb{SCS}$ then  $\vdash\,\subsetneq\,\vdash_{\mathcal{B}}$.
\end{theorem}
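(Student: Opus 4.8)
The plan is to establish the two claims separately, leaning on the soundness direction already proved in Lemma~\ref{lem:LS_soundness} and on a general monotonicity property of the induced relations $\vdash_{(\cdot)}$.

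For the inclusion $\vdash\,\subseteq\,\vdash_\mathcal{B}$, I would first record the elementary fact that the assignment $\mathcal{V}\mapsto\,\vdash_\mathcal{V}$ is order-reversing: if $\mathcal{B}\subseteq\mathcal{V}$ then $\vdash_\mathcal{V}\,\subseteq\,\vdash_\mathcal{B}$, since the defining condition for $\Gamma\vdash_\mathcal{V}\alpha$ quantifies over all $v\in\mathcal{V}$ and hence, a fortiori, over all $v\in\mathcal{B}$. Applying this with $\mathcal{B}\subseteq\mathbb{SCS}$ gives $\vdash_{\mathbb{SCS}}\,\subseteq\,\vdash_\mathcal{B}$, while Lemma~\ref{lem:LS_soundness} gives $\vdash\,\subseteq\,\vdash_{\mathbb{SCS}}$; composing these yields $\vdash\,\subseteq\,\vdash_\mathcal{B}$.

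For properness when $\mathcal{B}\subsetneq\mathbb{SCS}$, the idea is to produce a single pair $(\Gamma,\alpha)$ witnessing $\Gamma\vdash_\mathcal{B}\alpha$ while $\Gamma\nvdash\alpha$. First I would pick $v\in\mathbb{SCS}\setminus\mathcal{B}$; by definition $v=\chi_\Sigma$ for some strongly closed, $\beta$-saturated $\Sigma$, and I fix such a $\beta$. Take $\Gamma=\Sigma$ and $\alpha=\beta$. Since $\Sigma$ is $\beta$-saturated we immediately have $\Sigma\nvdash\beta$, so it remains to verify $\Sigma\vdash_\mathcal{B}\beta$. Let $w\in\mathcal{B}$ satisfy $\Sigma$; writing $w=\chi_\Delta$ for a strongly closed set $\Delta$, satisfaction means $\Sigma\subseteq\Delta$. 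Because $v\notin\mathcal{B}$ while $w\in\mathcal{B}$, we have $w\ne v$, hence $\Delta\ne\Sigma$ (a set is determined by its characteristic function), so in fact $\Sigma\subsetneq\Delta$.

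Now I would invoke monotonicity of $(\mathscr{L},\vdash)$ (it is of Tarski-type, being of $\tl{4}$-type) together with the second observation in Remark~\ref{rem:alpha-sat} to conclude that the $\beta$-saturated set $\Sigma$ is relatively maximal in $\beta$; consequently every proper extension of $\Sigma$ derives $\beta$, so $\Delta\vdash\beta$. As $\Delta$ is strongly closed, and hence closed by Remark~\ref{rem:closed-str_closed}, $\beta\in\Delta$, i.e.\ $w(\beta)=1$. Since $w\in\mathcal{B}$ was arbitrary, $\Sigma\vdash_\mathcal{B}\beta$, and combined with $\Sigma\nvdash\beta$ this gives $\vdash\,\subsetneq\,\vdash_\mathcal{B}$. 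The main subtlety to handle carefully is the step $\Sigma\subsetneq\Delta$: everything hinges on using the excluded valuation $v=\chi_\Sigma$ itself as the witnessing set, so that any valuation surviving in $\mathcal{B}$ corresponds to a strictly larger set and is thereby forced, via relative maximality, to validate $\beta$.
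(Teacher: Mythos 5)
Your proof is correct, and while it shares the paper's two-part skeleton---even the same witness pair $(\Sigma,\beta)$ arising from an excluded valuation $\chi_\Sigma\in\mathbb{SCS}\setminus\mathcal{B}$---it runs the properness argument through different machinery. The paper stays inside the bivaluation semantics: it proves $\vdash_{\mathbb{SCS}}\,\subseteq\,\vdash_{\mathcal{B}}$, establishes $\Sigma\nvdash_{\mathbb{SCS}}\beta$ by appealing to the completeness lemma (Lemma \ref{lem:TL_complete}), shows $\Sigma\vdash_{\mathcal{B}}\beta$ by contradiction using only condition (ii) of strong closedness of $\Delta$ applied to $\Sigma\cup\{\sigma\}\vdash\beta$ for some $\sigma\in\Delta\setminus\Sigma$ (no monotonicity enters at that point), and finally transfers the strict inclusion from $\vdash_{\mathbb{SCS}}$ to $\vdash$ via the adequacy theorem (Theorem \ref{thm:Adeq_TL}). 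You instead work with $\vdash$ directly: $\Sigma\nvdash\beta$ is immediate from $\beta$-saturation, so you bypass both the completeness lemma and adequacy; the price is that to get $\Delta\vdash\beta$ you upgrade $\Sigma$ from $\beta$-saturated to relatively maximal in $\beta$ using monotonicity (via the Tarski-type half of the $\tl{4}$ hypothesis) together with Remark \ref{rem:alpha-sat}, and then use plain closedness of $\Delta$. Each route buys something: yours is more economical and exposes that the first claim holds for arbitrary logical structures (Lemma \ref{lem:LS_soundness} plus the antitonicity of $\mathcal{V}\mapsto\,\vdash_{\mathcal{V}}$ need no hypotheses) and the second needs only monotonicity rather than full $\tl{4}$-ness; the paper's core step showing $\beta\in\Delta$ is, by contrast, hypothesis-free, and its detour through $\vdash_{\mathbb{SCS}}$ packages the theorem as a statement that the canonical semantics $\mathbb{SCS}$ is minimal among its own subsets, which sets up the representation theorem that follows.
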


\begin{proof}
Suppose $\mathcal{B}\subseteq \mathbb{SCS}$. We claim that this implies $\vdash_\mathbb{SCS}\,\subseteq\,\vdash_\mathcal{B}$. Suppose the contrary. Then, there exists $\Gamma\cup\{\alpha\}\subseteq\mathscr{L}$ such that $\Gamma\vdash_\mathbb{SCS}\alpha$, while $\Gamma\nvdash_\mathcal{B}\alpha$. This implies that, there exists $v\in\mathcal{B}$ such that $v(\Gamma)=\{1\}$ but $v(\alpha)\neq1$. Now, since $\mathcal{B}\subseteq\mathbb{SCS}$, $v\in\mathbb{SCS}$. Then $v(\Gamma)=\{1\}$ while $v(\alpha)\neq1$ implies that $\Gamma\nvdash_\mathbb{SCS}\alpha$. This is a contradiction. Hence $\vdash_\mathbb{SCS}\,\subseteq\,\vdash_\mathcal{B}$.

Now, suppose $\mathcal{B}\subsetneq \mathbb{SCS}$. Then, there exists $\beta\in\mathscr{L}$ and a strongly closed $\beta$-saturated $\Sigma\subseteq\mathscr{L}$ such that $\chi_\Sigma\in \mathbb{SCS}$ but $\chi_\Sigma\notin \mathcal{B}$. Since $\Sigma$ is $\beta$-saturated, $\Sigma\nvdash\beta$. This implies, by Lemma \ref{lem:TL_complete}, that $\Sigma\nvdash_\mathbb{SCS}\beta$. 

We claim that $\Sigma\vdash_\mathcal{B}\beta$. Suppose the contrary, i.e. $\Sigma\nvdash_\mathcal{B}\beta$. This implies that, there exists $\delta\in\mathscr{L}$ and a strongly closed $\delta$-saturated $\Delta\subseteq \mathscr{L}$ such that $\chi_\Delta\in\mathcal{B}$ with $\chi_\Delta(\Sigma)=\{1\}$ but $\chi_\Delta(\beta)\neq1$. This implies that $\Sigma\subseteq\Delta$ but $\beta\notin\Delta$. Now, since $\chi_\Delta\in\mathcal{B}$, we can conclude that $\Delta\neq\Sigma$, as $\chi_\Sigma\notin\mathcal{B}$. Thus $\Sigma\subsetneq\Delta$, which implies that there exists $\sigma\in \Delta\setminus\Sigma$. Then, as $\Sigma$ is $\beta$-saturated, $\Sigma\cup\{\sigma\}\vdash \beta$. Now, since $\Delta$ is strongly closed and $\Sigma\cup\{\sigma\}\subseteq \Delta$, $\Delta\vdash\beta$, and hence $\beta\in\Delta$. This is a contradiction. So, $\Sigma\vdash_\mathcal{B}\beta$ while $\Sigma\nvdash_\mathbb{SCS}\beta$, which implies that $\vdash_\mathbb{SCS}\,\subsetneq\,\vdash_\mathcal{B}$. Hence, by Theorem \ref{thm:Adeq_TL}, $\vdash\,\subsetneq\,\vdash_\mathcal{B}$.
\end{proof}

\begin{theorem}[\textsc{Representation Theorem for $\tl{4}$}]\label{thm:Rep_TL}
Let $(\mathscr{L},\vdash)$ be a logical structure. We define
\[
\begin{array}{c}
\mathbb{SCS}[\beta,\alpha]=\{\chi_\Sigma\in \mathbb{SCS}\mid\,\Sigma~\hbox{is}~\beta\hbox{-saturated},~\Sigma\nvdash\alpha~\hbox{and}~\{\beta\}\vdash\alpha\}\\
\hbox{and}\\
\mathbb{SCS}^\ast=\displaystyle\bigcup_{\alpha,\beta\in \mathscr{L}}\mathbb{SCS}[\beta,\alpha].
\end{array}
\]
Then, $(\mathscr{L},\vdash)$ is of $\tl{4}$-type iff $\vdash\,=\,\vdash_{\mathbb{SCS}^\ast}$. 
\end{theorem}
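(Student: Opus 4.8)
The plan is to prove the two implications separately, after recording one preliminary observation that I will use in the converse direction: $\mathbb{SCS}^\ast\subseteq\mathbb{SCS}$ always, and as soon as $\vdash$ is reflexive the two sets coincide. Indeed, if $\Sigma$ is strongly closed and $\beta$-saturated, then $\Sigma\nvdash\beta$ and, by reflexivity, $\{\beta\}\vdash\beta$, so $\chi_\Sigma\in\mathbb{SCS}[\beta,\beta]\subseteq\mathbb{SCS}^\ast$; hence $\mathbb{SCS}\subseteq\mathbb{SCS}^\ast$ as well.

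For the direction $\tl{4}\implies(\vdash\,=\,\vdash_{\mathbb{SCS}^\ast})$, soundness is immediate: every member of $\mathbb{SCS}^\ast$ is the characteristic function of a strongly closed set, so the argument of Lemma~\ref{lem:LS_soundness} applies verbatim and yields $\vdash\,\subseteq\,\vdash_{\mathbb{SCS}^\ast}$. For the reverse inclusion I would invoke the characterization of $\tl{4}$-type structures. Given $\Gamma\nvdash\alpha$, statement~(4) of Theorem~\ref{thm:Char_TL4} supplies a $\beta\in\mathscr{L}$ and a strongly closed $\beta$-saturated $\Sigma\supseteq\Gamma$ with $\Sigma\nvdash\alpha$ and $\{\beta\}\vdash\alpha$; these are exactly the defining conditions of $\mathbb{SCS}[\beta,\alpha]$, so $\chi_\Sigma\in\mathbb{SCS}[\beta,\alpha]\subseteq\mathbb{SCS}^\ast$. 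Since $\Gamma\subseteq\Sigma$ while $\alpha\notin\Sigma$ (as $\Sigma$ is closed and $\Sigma\nvdash\alpha$), the bivaluation $\chi_\Sigma$ witnesses $\Gamma\nvdash_{\mathbb{SCS}^\ast}\alpha$. Taking the contrapositive gives $\vdash_{\mathbb{SCS}^\ast}\,\subseteq\,\vdash$, and hence equality. (Essentially this is the content of Theorem~\ref{thm:Adeq_TL} once one notes $\mathbb{SCS}^\ast=\mathbb{SCS}$ here.)

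For the converse, assume $\vdash\,=\,\vdash_{\mathbb{SCS}^\ast}$. Since $\vdash_{\mathbb{SCS}^\ast}$ is induced by a set of bivaluations, it is of Tarski-type by the observation preceding Theorem~\ref{thm:Rep_Tarski}; hence $\vdash$ is Tarski-type and, in particular, reflexive and monotone, so by the preliminary observation $\mathbb{SCS}^\ast=\mathbb{SCS}$ and $\vdash\,=\,\vdash_{\mathbb{SCS}}$. It remains to show that $(\mathscr{L},\vdash)$ is of Lindenbaum-IV-type; by monotonicity and Theorem~\ref{thm:MonLind=>(T2Lind<=>T4Lind)} it suffices to verify the Lindenbaum-II condition. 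Given $\Gamma\nvdash\alpha$, i.e.\ $\Gamma\nvdash_{\mathbb{SCS}^\ast}\alpha$, there is $\chi_\Sigma\in\mathbb{SCS}^\ast$ with $\Gamma\subseteq\Sigma$ and $\alpha\notin\Sigma$, where $\Sigma$ is strongly closed and $\beta$-saturated for some $\beta$; the goal is to extract from this an $\alpha$-saturated (equivalently, by Remark~\ref{rem:alpha-sat}, relatively maximal in $\alpha$) extension of $\Gamma$.

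I expect this last step to be the main obstacle, and the point I would scrutinize most carefully. The witness $\Sigma$ is only guaranteed to be $\beta$-saturated for the index $\beta$ recorded in its membership in some $\mathbb{SCS}[\beta,\alpha_0]$, and that $\beta$ (with its associated $\alpha_0$ satisfying $\{\beta\}\vdash\alpha_0$) need not be tied to the target $\alpha$; in general a strongly closed $\beta$-saturated set with $\alpha\notin\Sigma$ need not be $\alpha$-saturated. The plan would therefore be to verify statement~(4) of Theorem~\ref{thm:Char_TL4} \emph{directly}, producing a $\beta$ with $\{\beta\}\vdash\alpha$ together with a strongly closed $\beta$-saturated $\Sigma\supseteq\Gamma$ satisfying $\Sigma\nvdash\alpha$, and the delicate part is precisely to match the saturation index to $\alpha$ rather than to accept whatever index the chosen element of $\mathbb{SCS}^\ast$ happens to carry. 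Getting this matching to go through from the hypothesis $\vdash\,=\,\vdash_{\mathbb{SCS}^\ast}$ alone is where the real work lies, since the mere facts that $\vdash$ is Tarski-type and $\vdash\,=\,\vdash_{\mathbb{SCS}}$ do not by themselves deliver the relatively-maximal-in-$\alpha$ extension.
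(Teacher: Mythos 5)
Your forward direction is correct and is essentially the paper's own argument: every member of $\mathbb{SCS}^\ast$ is the characteristic function of a strongly closed set, so the argument of Lemma~\ref{lem:LS_soundness} gives $\vdash\,\subseteq\,\vdash_{\mathbb{SCS}^\ast}$, and statement~(4) of Theorem~\ref{thm:Char_TL4} produces, for each $\Gamma\nvdash\alpha$, a witness lying in $\mathbb{SCS}[\beta,\alpha]$ that refutes $\Gamma\vdash_{\mathbb{SCS}^\ast}\alpha$. Your preliminary observation that $\mathbb{SCS}^\ast=\mathbb{SCS}$ as soon as $\vdash$ is reflexive (via $\chi_\Sigma\in\mathbb{SCS}[\beta,\beta]$) is also correct, and the paper never makes it explicit. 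The converse direction, however, you do not prove: after reducing it to ``$\vdash$ is of Tarski-type and $\vdash\,=\,\vdash_{\mathbb{SCS}}$'', you point out that the saturation index $\beta$ of a witnessing $\chi_\Sigma$, together with its companion $\alpha_0$ satisfying $\{\beta\}\vdash\alpha_0$, need not be tied to the target $\alpha$, and you stop there. As submitted, the right-to-left half is simply not established, so the proposal is incomplete.

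But your diagnosis of the obstacle is exactly right, and it is fatal rather than merely delicate. The paper's own proof of the converse takes the witness $\chi_\Sigma\in\mathbb{SCS}^\ast$ refuting $\Gamma\vdash_{\mathbb{SCS}^\ast}\alpha$ and asserts that ``$\Sigma\nvdash\alpha$ and $\{\beta\}\vdash\alpha$'', then applies statement~(4) of Theorem~\ref{thm:Char_TL4}; the first conjunct is fine ($\alpha\notin\Sigma$ and $\Sigma$ is closed), but the second is precisely the unjustified index-matching you declined to perform, since membership in the union $\mathbb{SCS}^\ast$ only records $\{\beta\}\vdash\alpha_0$ for \emph{some} $\alpha_0$, not for the given $\alpha$. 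Moreover, no repair is possible, because the right-to-left implication is false. Take $\mathscr{L}=\omega\cup\{\omega\}$ and let $C_\vdash(\Gamma)$ be the least initial segment $\{0,1,\dots,n-1\}$ containing $\Gamma$ when $\Gamma$ is a bounded subset of $\omega$, and $\mathscr{L}$ otherwise; this is a Tarski-type structure (a closure operator) whose strongly closed saturated sets are exactly the initial segments, the segment $\{0,\dots,n-1\}$ being $n$-saturated, so that $\mathbb{SCS}^\ast=\mathbb{SCS}=\{\chi_{\{0,\dots,n-1\}}\mid n\in\omega\}$ by your reflexivity observation. Here $\vdash\,=\,\vdash_{\mathbb{SCS}^\ast}$: soundness is Lemma~\ref{lem:LS_soundness}, and whenever $\Gamma\nvdash\alpha$ the bivaluation $\chi_{C_\vdash(\Gamma)}$ satisfies $\Gamma$ and assigns $\alpha$ the value $0$. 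Yet $\{0\}\nvdash\omega$ while no set at all is relatively maximal in $\omega$ (any set not proving $\omega$ is a bounded subset of $\omega$ and admits a proper bounded extension), so this structure is not of Lindenbaum-IV-type and hence not of $\tl{4}$-type. In short: the forward half, which you proved, stands; the converse half fails, your suspicion that $\vdash\,=\,\vdash_{\mathbb{SCS}^\ast}$ alone cannot deliver the relatively-maximal-in-$\alpha$ extension is vindicated, and what you identified as ``where the real work lies'' is in fact an error in the paper's proof and in the statement of the theorem itself.
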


\begin{proof}
Suppose $(\mathscr{L},\vdash)$ is of $\tl{4}$-type. 
We note that $\mathbb{SCS}^*\subseteq \mathbb{SCS}$. Then, by Theorem \ref{thm:Min_TL}, $\vdash\,\subseteq\, \vdash_{\mathbb{SCS}^\ast}$. To prove the reverse inclusion, let $\Gamma\cup\{\alpha\}\subseteq\mathscr{L}$ such that $\Gamma\nvdash\alpha$. Then, since $(\mathscr{L},\vdash)$ is of $\tl{4}$-type, by statement (4) of  Theorem~\ref{thm:Char_TL4}, there exists $\beta\in \mathscr{L}$ and a strongly closed $\beta$-saturated $\Sigma\supseteq\Gamma$ such that $\Sigma\nvdash\alpha$ and $\{\beta\}\vdash\alpha$. This implies that $\chi_\Sigma\in \mathbb{SCS}[\beta,\alpha]\subseteq \mathbb{SCS}^\ast$. Now, as $\Gamma\subseteq\Sigma$, $\chi_\Sigma(\Gamma)=\{1\}$. However, since $\Sigma$ is strongly closed and $\Sigma\nvdash\alpha$, $\alpha\notin\Sigma$, which implies that $\chi_\Sigma(\alpha)=0$. This means that $\Gamma\nvdash_{\mathbb{SCS}^\ast}\alpha$. Thus, $\Gamma\nvdash\alpha$ implies $\Gamma\nvdash_{\mathbb{SCS}^\ast}\alpha$. So, we can conclude that $\vdash_{\mathbb{SCS}^\ast}\,\subseteq\,\vdash$. Hence $\vdash\,=\,\vdash_{\mathbb{SCS}^\ast}$.

Conversely, suppose $\vdash\,=\,\vdash_{\mathbb{SCS}^\ast}$.
Let $\Gamma\cup\{\alpha\}\subseteq \mathscr{L}$ be such that $\Gamma\nvdash\alpha$. Then, as $\vdash\,=\,\vdash_{\mathbb{SCS}^\ast}$, $\Gamma\nvdash_{\mathbb{SCS}^\ast}\alpha$. So, there exists a $\chi_\Sigma\in\mathbb{SCS}^\ast$, where $\Sigma$ is strongly closed, $\beta$-saturated for some $\beta\in\mathscr{L}$, $\Sigma\nvdash\alpha$ and $\{\beta\}\vdash\alpha$, such that $\chi_\Sigma(\alpha)=0$ but $\chi_\Sigma(\Gamma)=\{1\}$, i.e. $\Sigma\supseteq\Gamma$. Hence, by statement (4) of Theorem~\ref{thm:Char_TL4}, $(\mathscr{L},\vdash)$ is of $\tl{4}$-type.
\end{proof}

The following two results were proved in \cite{Beziau1999}\footnote{The presentations of these results here differ from those in \cite{Beziau1999}, mainly due to differing terminology. One can, however, easily prove that our formulations are equivalent to the original ones.}.
\begin{enumerate}[label=(\arabic*)]
\item Let $(\mathscr{L},\vdash)$ be a logical structure and 
\[
\mathbb{RELMAX}=\displaystyle\bigcup_{\alpha\in \mathscr{L}}\{\chi_\Sigma:\Sigma~\hbox{is relatively maximal in}~\alpha\}
\]
If $(\mathscr{L},\vdash)$ is of finitary Tarski-type then $\vdash_{\mathbb{RELMAX}}\,=\,\vdash$.
\item For a given finitary Tarski-type logical structure $(\mathscr{L},\vdash)$ and for any $\mathcal{B}\subsetneq \mathbb{RELMAX}$ we have  $\vdash\,\subsetneq\,\vdash_{\mathcal{B}}$.
\end{enumerate}
These can now be seen as special cases of Theorems \ref{thm:Adeq_TL} and \ref{thm:Min_TL} via the following observations.
\begin{enumerate}[label=(\alph*)]
    \item Every finitary Tarski-type logical structure is of $\tl{4}$-type, by Corollary~\ref{cor:finTarski=>LindIV}.
    \item Let $(\mathscr{L},\vdash)$ be a Tarski-type logical structure and $\Gamma\cup\{\alpha\}\subseteq\mathscr{L}$ be such that $\Gamma$ is relatively maximal in $\alpha$. Then $\Gamma$ is strongly closed.
    
\noindent To prove this, we note that since $\Gamma$ is relatively maximal in $\alpha$, $\Gamma\nvdash\alpha$. Then, since $(\mathscr{L},\vdash)$ is of Tarski-type, by Theorem~\ref{thm:Char_Tar}, there exists a strongly closed $\Sigma\supseteq\Gamma$ such that $\Sigma\nvdash\alpha$. Since $\Gamma$ is relatively maximal in $\alpha$, this implies $\Sigma=\Gamma$. Hence $\Gamma$ is strongly closed.
    \item Let $(\mathscr{L},\vdash)$ be a Tarski-type logical structure. Then, by (b) above, and Remark \ref{rem:alpha-sat}, for any $\Sigma\cup\{\alpha\}\subseteq\mathscr{L}$, $\Sigma$ is relatively maximal in $\alpha$ iff it is strongly closed and $\alpha$-saturated. Hence, for finitary Tarski-type logical structures, $\mathbb{RELMAX}=\mathbb{SCS}$.
\end{enumerate}

\section{Concluding Remarks}\label{sec:Concl}
The main contributions in this paper can be summarized as follows.
\begin{itemize}
    \item We have studied four classes of Lindenbaum-type logical structures and proved characterization theorems for two of them.
    \item A new characterization theorem for the previously known Tarski-type logical structures has been proved. We have also discussed a representation theorem for these logical structures using the notion of a Suszko set.
    \item The classes of Tarski-type and the four Lindenbaum-type logical structures have been separated using multiple examples.
    \item We have next studied the logical structures that are of, both Tarski- and a Lindenbaum-type. These have been named as the $\tl{i}$-type logical structures, where $i=1,2,3,4$. The $\tl{4}$-type logical structures have been studied in detail. We have proved characterization, adequacy, minimality, and representation theorems for these. It has been pointed out that these are generalizations of some results proved by B\'eziau in \cite{Beziau1999}, for finitary Tarski-type logical structures.
    \item It has been pointed out that the class of $\tl{2}$-type logical structures coincides with that of the $\tl{4}$-type ones. The rest have been shown to be separate using examples.
    \item The $\alpha$-saturated sets have also been studied and it has been shown that the notion offers a generalization to some better-known ones, viz., those of the maximal consistent sets in classical propositional logic, and the implication-saturated sets introduced by Batens in \cite{Batens1980}.
\end{itemize}

Finally, the following are some possible directions for future work.
\begin{itemize}
    \item Although the study of specific classes of logical structures is important, the spirit of universal logic lies in developing a general theory. This is embodied by results such as our Lemma \ref{lem:LS_soundness}. We expect to have more such results in future.
    \item The characterization theorems for Lindenbaum-I- and Lindenbaum-II-type logical structures remain as topics for future projects.
    \item In the current paper, we have provided a number of examples to separate the various classes of logical structures. It will be interesting to see if these examples can be characterized in some way. In particular, one might ask if the use of von Neumann ordinals is essential to separate the $\tl{1}$- from $\tl{2}$- and $\tl{3}$-type logical structures.
    \item It is expected that results similar to the ones proved in Section \ref{sec:TL-4} can be proved for $\tl{i}$-structures, where $i=1,2,3$. These are also left for the future.
    \item Lastly, one could work with more general sets instead of $\{0, 1\}$. Universal logic has deep connections with Suszko's Thesis and many-valued logics (see \cite{Beziau2020}). Investigation in these lines remains a job for the future.
\end{itemize}

\section*{Acknowledgements}
The authors wish to express their gratitude towards Prof.\ Peter Arndt, Lt.\ Prof.\ John Corcoran for their suggestions and encouragement, and two anonymous referees for their comments on the extended abstract of an earlier version of the paper submitted to the 9th Indian Conference on Logic and its Applications (ICLA), 2021.
 
\bibliographystyle{plain}
\bibliography{ref}
\end{document}